\documentclass[letterpaper, reqno, 11pt]{aq-amsart}

\title{Weighted linearization of vector fields\\via a formal Moser trick}
\author{Arthur Lei Qiu}

\newcommand*{\euler}{\mathcal{E}}
\newcommand*{\lin}{\mathrm{lin}}
\newcommand*{\tdvf}{\vf_t}
\newcommand*{\vfFilt}[1]{\vf_{[\![#1]\!]}}
\newcommand*{\vfGrad}[1]{\vf_{[#1]}}
\newcommand*{\poly}{\mathrm{pol}}

\DeclareMathOperator{\id}{id}                          
\DeclareMathOperator{\eval}{eval}                      

\newcommand*{\N}{\mathbb{N}}                           
\newcommand*{\R}{\mathbb{R}}                           
\newcommand*{\C}{\mathbb{C}}                           
\newcommand*{\K}{\mathbb{K}}
\let\hungarumlaut\H                                    
\renewcommand*{\H}{
    \TextOrMath{\hungarumlaut}{\mathbb{H}}%
}

\newcommand*{\p}{\partial}                             
\def\XXint#1#2#3{
    {\setbox0=\hbox{$#1{#2#3}{\int}$ }
    \vcenter{\hbox{$#2#3$ }}\kern-.6\wd0}
}

\DeclareMathOperator{\Mat}{Mat}
\newcommand*{\transpose}{t}                            
\newcommand{\pair}[1]{\left\langle{#1}\right\rangle}   

\newcommand*{\coordvf}[2][]{
    \frac{\partial}{\partial{#2}}%
    \ifstrempty{#1}{}{\mathchoice                      
    {\Big\rvert_{#1}}
    {\rvert_{#1}}
    {\rvert_{#1}}
    {\scriptstyle\rvert_{#1}}
    }
}
\newcommand*{\vf}{\mathfrak{X}}                        
\newcommand*{\lie}{\mathcal{L}}                        
\newcommand*{\tensor}[1][]{%
    \if\relax\detokenize{#1}\relax
        \mathcal{T}%
    \else
        \mathcal{T}^{(#1)}%
    \fi
}

\DeclareMathOperator{\ad}{ad}                          

\let\slant\sl                                          
\renewcommand*{\sl}{%
    \TextOrMath{\slant}{\mathfrak{sl}}%
}

\let\slashO\O                                          
\renewcommand*{\O}{
    \TextOrMath{\slashO}{\mathrm{O}}%
}
\let\slasho\o                                          
\renewcommand*{\o}{%
    \TextOrMath{\slasho}{\mathfrak{o}}%
}

\let\ubreve\u                                          
\renewcommand*{\u}{%
    \TextOrMath{\ubreve}{\mathfrak{u}}%
}

\newcommand*{\powseries}[2]{#1[\![#2]\!]}              
\newcommand*{\maxideal}{\mathfrak{m}}                  
\DeclareMathOperator{\Der}{Der}                        
\DeclareMathOperator{\Hom}{Hom}                        

\newcommand*{\f}{\frac}                                

\begin{document}

\maketitle

\begin{abstract}
    Many well-known theorems establish sufficient criteria for linearizability of a vector field in terms of the eigenvalues of its linear approximation. By attaching weights to coordinates so that some directions are considered ``linear'', others ``quadratic'', and so on, one can define the notion of a \emph{weighted linear approximation}. It is thus natural to ask when a vector field is ``weighted-linearizable''. In this paper, we formulate a weighted version of the non-resonance condition appearing in the Poincar\'{e} and Sternberg linearization theorems and show that it implies weighted linearizability.

    Our approach first addresses weighted linearization on the level of formal power series. In doing so, we develop a general framework to make sense of a power series version of \emph{Moser's trick}, a technique used to prove various normal form results in geometry. This formal Moser trick works over any field of characteristic zero and may be of independent interest.
\end{abstract}

\setcounter{tocdepth}{1}
\tableofcontents

\section{Introduction}\label{sec:introduction}

\subsection{Background}\label{subsec:background}

Linearization of vector fields is a fundamental technique with applications throughout mathematics, science, and engineering. It features prominently in the study of local normal forms in differential geometry and dynamical systems, where a natural question arises: when does the first-order approximation furnished by linearization accurately reflect a vector field's geometry and dynamics near a fixed point? This question is classical, dating back to Poincar\'{e} and Lie, and by now there is a wealth of literature establishing sufficient conditions for linearizability with respect to various notions of ``accuracy''.

In the topological category, the Hartman--Grobman theorem \cite[\S13]{arnold_geometrical_1988} states that the flow of a $C^1$ vector field on $\R^n$ can be conjugated via a homeomorphism to the flow of its linear approximation near a \emph{hyperbolic} fixed point (one where the vector field's Jacobian matrix has no purely imaginary eigenvalues). On the level of formal power series, Poincar\'{e}'s thesis \cite[Part II]{poincare_sur_1879} investigates when a system of $n$ ordinary differential equations
\begin{align*}
    \dot{x} = Ax + O(|x|^2), \qquad A \in \Mat_n(\C)
\end{align*}
can be brought to the form $\dot{y} = Ay$ by a formal change of coordinates $y = x + O(|x|^2)$. He formulates the following sufficient condition: call the eigenvalues $\lambda_1, \ldots, \lambda_n$ of the matrix $A$ \emph{resonant} if there exists $j \in \{1, \ldots, n\}$ and non-negative integers $\alpha_1, \ldots, \alpha_n$ such that
\begin{align*}
    \pair{\lambda, \alpha} = \lambda_j \quad\text{and}\quad \sum_{i = 1}^n \alpha_i > 1, \quad\text{where}\quad\pair{\lambda, \alpha} := \sum_{i = 1}^n \lambda_i\alpha_i.
\end{align*}
Poincar\'{e}'s linearization theorem states that, if the eigenvalues are \emph{not} resonant, then such a formal change of coordinates exists. Should the formal power series $Ax + O(|x|^2)$ define a holomorphic vector field on $\C^n$, results of Poincar\'{e} and Siegel \cite[\S24]{arnold_geometrical_1988} describe when a linearizing formal coordinate transformation converges to a biholomorphism, again in terms of eigenvalues of $A$.

In the smooth (by which we always mean $C^\infty$) category, Sternberg's linearization theorem \cite{sternberg_local_1957, sternberg_structure_1958} states that a smooth vector field on $\R^n$ can be linearized near a fixed point $p$ by a diffeomorphism provided that the (possibly complex) eigenvalues of its Jacobian matrix at $p$ satisfy the same non-resonance condition as in Poincar\'{e}'s linearization theorem. For example, fix an integer $k > 1$ and consider the vector field on $\R^2$ given by
\begin{align*}
    X = (x + y^k)\coordvf{x} + y\coordvf{y}.
\end{align*}
This vector field is \textit{Euler-like}, meaning its linear approximation at $p = 0$ is the \emph{Euler vector field}
\begin{align*}
    \euler = \tilde{x}\coordvf{\tilde{x}} + \tilde{y}\coordvf{\tilde{y}}.
\end{align*}
Since $\euler$ has non-resonant eigenvalues $(\lambda_1, \lambda_2) = (1, 1)$, Sternberg's theorem ensures that there exists a smooth local change of coordinates $(x, y) \mapsto (\tilde{x}, \tilde{y})$ transforming $X$ into $\euler$ near $0$. In fact, one can explicitly check that the (global) change of coordinates $\tilde{x} = x - \f{1}{k - 1}y^k$, $\tilde{y} = y$ works. More generally, a vector field on a smooth manifold is called Euler-like for a submanifold $P$ if it is tangent to $P$ and its linear approximation along $P$ equals the Euler vector field of the normal bundle of $P$; Bursztyn--Lima--Meinrenken have proved that every Euler-like vector field is smoothly linearizable along the entire submanifold \cite[Lemma 2.4]{bursztyn_splitting_2019}.

In contrast, neither Sternberg's nor Poincar\'{e}'s theorem can be applied to the \emph{weighted Euler-like} vector field
\begin{align}\label{eq:weighted-euler-like-example}
    Y = (x + y^k)\coordvf{x} + 2y\coordvf{y},
\end{align}
as the eigenvalues $(\lambda_1, \lambda_2) = (1, 2)$ of its linear part satisfy the resonance relation $2\lambda_1 = \lambda_2$. Nevertheless, $Y$ can be smoothly linearized: the coordinates $x' = x - \f{1}{2k - 1}y^k$, $y' = y$ take $Y$ to the \emph{weighted Euler vector field}
\begin{align*}
    \euler_w = x'\coordvf{x'} + 2y'\coordvf{y'}.
\end{align*}
Actually, this coordinate transformation does the job even when $k = 1$, in which case the eigenvalues of the linear part remain resonant but the linear term $y\coordvf{x}$ disappears upon changing coordinates. More generally, Meinrenken has proved that every weighted Euler-like vector field is smoothly linearizable \cite[Lemma 2.5]{meinrenken_euler-like_2021}, even though many such vector fields are resonant.

Thus, while the non-resonance condition is sufficient for formal and smooth linearizability, it is far from necessary. For resonant vector fields, Poincar\'{e}'s theorem is extended by the Poincar\'{e}--Dulac theorem \cite[\S23]{arnold_geometrical_1988}, which produces a formal change of coordinates that might remove some but not all non-linear terms in the vector field. The particular non-linearities that can appear upon changing coordinates depend on certain non-canonical choices made during the iterative construction of the coordinate transformation, and these in turn depend on the particular resonance relations occurring in the vector field, so there is some ambiguity in what the resulting normal form looks like.

This paper offers one systematic way to gain more control: by accepting a generalized notion of linearity based on assigning positive integer weights to the coordinates of $\R^n$ or $\C^n$, so that coordinates of weight $1$ are considered ``linear'', coordinates of weight $2$ are considered ``quadratic'', and so on. The weighted Euler vector field $\euler_w$ corresponds to assigning weights $w = (1, 2)$ to the first and second coordinates of $\R^2$, respectively, which explains why the coordinate transformation $(x, y) \mapsto (x', y')$ removes the term $y^k\coordvf{x}$ from the weighted Euler-like vector field $Y$ even when $k = 1$: it is considered ``non-linear'' with respect to these weights.

Such \emph{weightings} appear throughout mathematics. For example, weighted projective spaces are objects of study in algebraic geometry, and Loizides--Meinrenken \cite{loizides_differential_2023} have recently developed a differential-geometric theory of weightings in detail. Weighted linear approximations of vector fields have been previously studied by Algaba--Garc\'{i}a--Reyes \cite{algaba_like-linearizations_2009} under the name of ``like-linearizations''. Weighted Euler-like vector fields play an important role in their work, which provides necessary and sufficient conditions for a vector field to be equivalent to its weighted linear approximation in two different senses: the vector fields themselves being conjugate, and their flows being orbitally equivalent (i.e., the same up to time reparametrization).

\subsection{Main results}\label{subsec:main-results}

Algaba--Garc\'{i}a--Reyes' characterization of weighted linearizability differs from the other linearization theorems discussed in Section \ref{subsec:background} in that it is not expressed in terms of eigenvalues, which are straightforward to compute in principle. Our goal is to give an eigenvalue-based sufficient condition akin to the non-resonance condition of Poincar\'{e}'s and Sternberg's linearization theorems. This \emph{weighted non-resonance} condition is more flexible than the Poincar\'{e}--Sternberg condition and implies weighted linearizability; the compromise is that weighted linear approximations need not be linear in the usual sense. Nevertheless, one has more control over the non-linear terms that can appear compared to the Poincar\'{e}--Dulac theorem, as any remaining non-linearities must be ``weighted-linear''. Moreover, one can choose the weighting to their advantage, and sometimes by doing so one obtains weighted linear approximations that \emph{are} linear in the usual sense, as is the case for the vector field \eqref{eq:weighted-euler-like-example} with the weighting $w = (1, 2)$.

By combining the main results of this paper, we offer the following weighted generalization of the Poincar\'{e} and Sternberg linearization theorems.

\begin{theorem}\label{thm:main-theorem}
    Let $X$ be either a formal vector field on $\K^n$ where $\K$ is any field of characteristic zero, or a smooth vector field on an open neighbourhood of $0 \in \R^n$. If $w$ is any weighting of $\K^n$ (resp. $\R^n$) with respect to which $X$ is ``admissible'' and ``non-resonant'' (Definitions \ref{def:weighted-lin} and \ref{def:weighted-resonance}), then there exists a formal (resp. germ of a smooth) diffeomorphism that conjugates $X$ with its weighted linear approximation with respect to $w$.
\end{theorem}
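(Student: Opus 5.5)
We prove the formal statement with a formal Moser trick, then deduce the smooth statement by a Sternberg-type argument. Fix the weighting $w$. Give every monomial $x^\alpha\coordvf{x_j}$ the weighted degree $\pair{w,\alpha}-w_j$. This grades formal vector fields as $\vf=\prod_{m}\vfGrad{m}$ and induces the filtration $\vfFilt{m}=\prod_{k\ge m}\vfGrad{k}$. Admissibility of $X$ should mean that $X\in\vfFilt{0}$, so that $X=X_0+X_{\ge1}$ with weighted linear approximation $X_0\in\vfGrad{0}$.

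The idea is to interpolate. Put $X_t=X_0+tX_{\ge1}$ for $t\in[0,1]$ and look for a time-dependent vector field $Z_t\in\vfFilt{1}$ whose flow $\phi_t$ satisfies $\phi_t^*X_t=X_0$. Differentiating in $t$ gives the homological equation
\begin{align*}
[Z_t,X_t]=X_{\ge1},
\end{align*}
up to the sign conventions for $\lie$. We solve this degree by degree in the grading. In degree $m\ge1$ its leading part reads $\ad_{X_0}(Z_t)_m=(\text{known terms})_m$. The known terms involve only $(Z_t)_k$ with $k<m$, because $[\vfFilt{a},\vfFilt{b}]\subseteq\vfFilt{a+b}$.

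Weighted non-resonance is exactly the hypothesis that $\ad_{X_0}$ is invertible on each $\vfGrad{m}$, $m\ge1$. These spaces are finite dimensional because all weights are positive. Their generalized eigenvalues should be $\pair{\lambda,\alpha}-\lambda_j$ over monomials of weighted degree $m$, or the analogue for the weighted-linear part. If the eigenvalues of $X_0$ are not in $\K$, we pass to the algebraic closure, or simply argue that $\ad_{X_0}$ is injective on $\vfGrad{m}\otimes\bar\K$, hence invertible over $\K$.

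The recursion gives $(Z_t)_m$ as a polynomial in $t$ with coefficients in $\vfGrad{m}$, since $\ad_{X_t}$ is polynomial in $t$. The main technical point is to make sense of the flow of $Z_t$ and of $\int_0^1$ formally over an arbitrary field of characteristic zero. Because $Z_t\in\vfFilt{1}$ raises weighted degree, the time-ordered exponential is defined by a recursion that is finite in each degree. Concretely, we solve $\frac{d}{dt}\phi_t^*=\phi_t^*\circ\lie_{Z_t}$ on $\powseries{\K}{x}$ by iterated integrals of polynomials in $t$. Only formal integration of polynomials is needed, which is where characteristic zero enters. We then check that the result is an algebra automorphism, i.e. a formal diffeomorphism, and that $\frac{d}{dt}(\phi_t^*X_t)=\phi_t^*([Z_t,X_t]+X_{\ge1})$ up to sign, which vanishes. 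This settles the formal case at $t=1$. I expect this framework, namely exponentiating time-dependent filtered derivations and establishing the derivative identity, to be the main obstacle.

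For the smooth case, first apply the formal result to the Taylor series of $X$ at $0$. Borel's lemma realizes the resulting formal diffeomorphism as a smooth germ $\psi$. Then $\psi^*X-X_0$ is flat at $0$. Under admissibility $X_0$ is a weighted-homogeneous vector field with eigenvalue structure matching the linear situation, so the remaining step is to remove a flat perturbation. We use a Sternberg/Moser homotopy again: solve $[Z_t,X_0+tF]=F$ with $F$ flat, where now $Z_t$ is flat. Smooth solvability of this flat homological equation uses the hyperbolicity implied by non-resonance, in the form of a splitting into contracting and expanding weighted directions and integration of $F$ along the flow of $X_0$. This is the analogue of Sternberg's flat lemma. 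The weighted scaling $\kappa_s(x)=(s^{w_1}x_1,\dots,s^{w_n}x_n)$ commutes with $X_0$, which gives uniform estimates in that lemma.
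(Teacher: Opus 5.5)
\textbf{Formal step: sound, but by a different path.} The paper interpolates by weighted rescaling, $X_t=\sum_k t^kX_{[k]}$, while you use the straight line $X_t=X_{[0]}+tX_{\ge1}$ (your $X_0$ is $X_{[0]}$). Your path also works. Your recursion reads $(Z_t)_m=\ad_{X_{[0]}}^{-1}\bigl(X_{[m]}-t\sum_{j=1}^{m-1}[X_{[m-j]},(Z_t)_j]\bigr)$, so $(Z_t)_m$ has degree at most $m-1$ in $t$. Hence the coefficient of $t^k$ in $Z_t$ lies in $\vfFilt{k+1}$, which is exactly the hypothesis of Proposition~\ref{prop:evaluative-flow-weighted-condition}; your filtered Picard-iteration view of the flow gives the same evaluativity.

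\textbf{First gap: non-resonance versus invertibility of $\ad_{X_{[0]}}$.} You assert that weighted non-resonance is ``exactly'' invertibility of $\ad_{X_{[0]}}$ on each $\vfGrad{m}$, supported only by ``should be \ldots\ or the analogue for the weighted-linear part''. This is where the weighted content lies. $X_{[0]}$ is in general not linear (e.g.\ $x^2\coordvf{y}$ for $w=(1,2)$), and Definition~\ref{def:weighted-resonance} only constrains $w$-compatible orderings of the eigenvalues of $(X_{[0]})_{\lin}$. The missing argument has three parts.
\begin{enumerate}
\item Every linear term $x^j\coordvf{x^i}$ of $X_{[0]}$ has $w_j=w_i$, so $(X_{[0]})_{\lin}$ preserves $(\K^n)^*=\bigoplus_\ell V^\ell$. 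Choosing a Jordan basis inside each $V^\ell$ gives a linear change of coordinates that preserves both the weighted and unweighted gradings (hence each $\vfGrad{m}$) and produces a $w$-compatible ordering $\lambda$.
\item $T=X_{[0]}-(X_{[0]})_{\lin}$ contains only terms $x^\beta\coordvf{x^j}$ with $|\beta|\ge2$. So $\ad_T$ strictly raises unweighted degree on the finite-dimensional space $\vfGrad{m}$. The nilpotent Jordan part is strictly triangular for a lexicographic ordering of monomials.
\item Therefore $\ad_{X_{[0]}}$ is triangular on $\vfGrad{m}$, with diagonal entries $\pair{\lambda,\alpha}-\lambda_i$ over $\pair{w,\alpha}-w_i=m$.
\end{enumerate}
Without the first part, the eigenvalue attached to $\coordvf{x^i}$ need not come from the weight-$w_i$ block, and the non-resonance hypothesis could not be brought to bear.

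\textbf{Second gap: the smooth step.} Borel's lemma and the flat remainder $F=\psi^*X-X_{[0]}$ are fine. The problem is your proposed weighted flat lemma, solving $[Z_t,X_{[0]}+tF]=F$ by integrating along the flow after a contracting/expanding splitting. As sketched, this fails at a saddle. An integral of a flat field along the flow converges only for points whose forward (resp.\ backward) orbit stays near $0$. One therefore needs to split $F$ into pieces flat along the stable and unstable manifolds, globalize the vector field, and estimate all derivatives. Those estimates come from hyperbolic exponential rates beating flatness, not from the $\kappa_s$-symmetry; indeed the relevant flow is that of $X_{[0]}+tF$, which has no such symmetry.

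\textbf{Suggested fix.} All of this is the content of Chen's theorem of equivalence, which needs no weighted modification, so you can cite it directly. $X$ and $X_{[0]}$ are formally conjugate by the formal part. Admissibility makes $DX(0)$ block upper triangular with block diagonal $DX_{[0]}(0)$, so their linear parts share eigenvalues. What you must still supply is the short check that weighted non-resonance forces hyperbolicity. Let $e_j\in\N^n$ be the $j$th unit multi-index. An eigenvalue $\lambda_j=0$ gives the resonance $(j,2e_j)$. A pair $\lambda_a=i\theta$, $\lambda_b=-i\theta$ with $\theta\neq0$ gives the resonance $(a,2e_a+e_b)$, of weighted degree $w_a+w_b>0$.
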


Taking the \emph{trivial weighting} $w = (1, \ldots, 1)$ recovers Poincar\'{e}'s and Sternberg's theorems, for in this case admissibility simply means that $0$ is a fixed point, non-resonance coincides with the usual notion, and weighted linear approximation is just linear approximation. A special case of Theorem \ref{thm:main-theorem} (Proposition \ref{prop:weighted-euler-like-lin}) easily detects that the $y^k\coordvf{x}$ term in the vector field \eqref{eq:weighted-euler-like-example} can be removed when Poincar\'{e}'s and Sternberg's theorems cannot (though one already knows this from \cite[Lemma 2.5]{meinrenken_euler-like_2021}).

To explain our approach to proving Theorem \ref{thm:main-theorem}, it is worth stepping back from weighted linearizability to consider an abstracted problem. A fundamental question in differential geometry is: given tensor fields (or differential forms, or multivector fields) $A_0$ and $A_1$ on a smooth manifold $M$, when does there exist a diffeomorphism $\varphi \colon M \to M$ such that the pullback $\varphi^*A_1$ is equal to $A_0$? If not globally, then one can ask: supposing $x \in M$ is a point such that $A_0(x) = A_1(x)$, when does there exist a locally defined diffeomorphism $\varphi$ such that $\varphi(x) = x$ and $\varphi^*A_1 = A_0$ nearby?

\emph{Moser's trick} is a powerful technique for answering this question. In the broadest sense, it consists of searching for a one-parameter family of tensor fields (or differential forms, or multivector fields) $\{A_t\}_{t \in [0, 1]}$ joining $A_0$ and $A_1$, and a one-parameter family of diffeomorphisms $\{\varphi_t\}_{t \in [0, 1]}$ such that $\varphi_0$ is the identity map and $\f{d}{dt}(\varphi_t^*A_t) = 0$. If such families can be found, then $\varphi_1^*A_1 = \varphi_0^*A_0 = A_0$, so $\varphi := \varphi_1$ solves the problem. Typically, instead of directly looking for the family of diffeomorphisms $\varphi_t$, one seeks out the time-dependent vector field $X_t$ that generates $\varphi_t$ as its flow. The rate of change of $\varphi_t^*A_t$ is related to the Lie derivative $\lie_{X_t}$ via the identity
\begin{align}\label{eq:pullback-rate-of-change}
    \f{d}{dt}(\varphi_t^*A_t) = \varphi_t^*\left(\f{dA_t}{dt} + \lie_{X_t} A_t\right).
\end{align}

One can often make a judicious choice of the interpolating family $A_t$ based on the particular structure of a given problem, thus easily finding a solution $X_t$ to the differential equation
\begin{align}\label{eq:lie-deriv-diff-eq}
    \f{dA_t}{dt} + \lie_{X_t} A_t = 0.
\end{align}
Integrating $X_t$ to its flow then produces the desired $\varphi_t$. Originally developed by Moser in the context of volume-preserving geometry \cite{moser_volume_1965}, Moser's trick can be used to give very short proofs of the Darboux theorem \cite{weinstein_symplectic_1969} and Morse lemma \cite{palais_morse_1969}. It also facilitates Bursztyn--Lima--Meinrenken's proof that Euler-like vector fields are linearizable along submanifolds, and Meinrenken's proof that weighted Euler-like vector fields are conjugate to their corresponding weighted Euler vector fields.

Techniques developed by Sternberg were later modified by Chen \cite{chen_equivalence_1963} to prove a ``theorem of equivalence'': two smooth vector fields, both with a hyperbolic fixed point at $0 \in \R^n$, are smoothly conjugate near $0$ if and only if they are formally conjugate. Since complex roots of polynomials with real coefficients come in conjugate pairs, non-resonance implies hyperbolicity for smooth vector fields. (If a fixed point is \emph{not} hyperbolic, then either $0$ is an eigenvalue of the Jacobian matrix, in which case $0 = 2 \cdot 0$ is a resonance, or there is a pair of non-zero purely imaginary eigenvalues $\pm i\theta$, in which case $i\theta = 2(i\theta) + 1(-i\theta)$ is a resonance.) Thus, one can linearize a non-resonant smooth vector field by first Taylor expanding its component functions; Poincar\'{e}'s theorem then produces a formal diffeomorphism that kills off any higher finite-order terms; Chen's theorem upgrades this to an actual diffeomorphism that kills off any remaining infinite-order terms.

In view of Chen's theorem of equivalence, we sought to develop a Moser-type argument for Theorem \ref{thm:main-theorem} that passes through the world of formal power series. Working with functions and vector fields on $\R^n$ or $\C^n$ only insofar as their formal Taylor series mattered led us to na\"{i}vely wonder if the time parameter $t$ in Moser's trick could be treated as a formal variable as well. One obvious problem arises: namely, how to ``evaluate'' an arbitrary power series at $t = 1$ (or any $t \neq 0$ for that matter). Rather than giving up and reverting to treating $t$ as an actual parameter, we obstinately proceeded to develop criteria describing when it \emph{does} make sense to ``evaluate at non-zero $t$'' in a way that ended up not using any properties about $\R$ or $\C$ besides being fields of characteristic zero. We thus obtain a ``Moser trick for formal power series'' in contexts beyond its original scope:

\begin{named*}[Formal Moser trick]
    Let $\K$ be any field of characteristic zero, and let $A_0$ and $A_1$ be formal tensor fields on $\K^n$. Suppose there exists an ``evaluative'' (Definition \ref{def:evaluative}) time-dependent formal tensor field $A_t$ interpolating between $A_0$ and $A_1$, and there exists a time-dependent formal vector field $X_t$ satisfying Equation \eqref{eq:lie-deriv-diff-eq} in a certain formal sense and whose flow is ``evaluative''. Then there exists a formal diffeomorphism $\varphi$ of $\K^n$ such that $\varphi^*A_1 = A_0$.
\end{named*}

\subsection{Paper overview}

The rest of this paper is structured as follows. In Sections \ref{subsec:formal-objects} and \ref{subsec:time-dependence}, we define algebraic analogues of standard differential-geometric concepts: vector/tensor fields, diffeomorphisms, related constructions, and one-parameter families thereof. In Section \ref{subsec:formal-flows}, we state and prove a version of the existence and uniqueness theorem for formal solutions to ordinary differential equations (Theorem \ref{thm:flow-existence-uniqueness}), allowing us to import Equations \eqref{eq:pullback-rate-of-change} and \eqref{eq:lie-deriv-diff-eq} into this setting. In Section \ref{subsec:fixed-time-evaluation}, we introduce ``evaluative'' formal objects, state a marginally different-looking version of the formal Moser trick (Corollary \ref{cor:formal-moser}), and give a sufficient condition for flows of formal vector fields to be evaluative.

Section \ref{subsec:weights} focuses on weightings and the associated notions of quasi-homogeneity of functions and vector fields. Though exposited within an algebraic setting, we also pause to make comparisons to the differentiable world. Section \ref{subsec:weights-eval} generalizes the sufficient condition for ``evaluability'' of formal flows from Section \ref{subsec:fixed-time-evaluation} to account for such weightings.

In Section \ref{subsec:weighted-lin-approx}, we define the class of vector fields for which it is sensible to pose the question of weighted linearizability; sketch an informal argument yielding a plausibly sufficient condition in the differentiable world; then reverse the argument to prove that this condition is indeed enough for weighted linearizability on the level of formal power series (Theorem \ref{thm:weighted-linearizability}). Phrased in terms of invertibility of the adjoint operator defined by a given vector field's weighted linear approximation, this condition at first looks dissimilar to the eigenvalue-based conditions of the linearization theorems previously described. We bridge the gap in Section \ref{subsec:weighted-hom-eq} by stating our weighted non-resonance condition and proving it implies that this adjoint operator is invertible (Theorem \ref{thm:adjoint-invertibility}), thus yielding a weighted generalization of Poincar\'{e}'s theorem. The paper concludes in Section \ref{subsec:weighted-lin-smooth} by explaining how Chen's theorem of equivalence can be invoked in the weighted setting to upgrade from formal to smooth weighted linearizability (Corollary \ref{cor:smooth-weighted-lin}), thus generalizing Sternberg's theorem.

Throughout this paper, $\K$ denotes a fixed choice of any field of characteristic zero. All algebras and algebra homomorphisms are required to be unital; all derivations are required to be $\K$-linear. We include $0$ in the set of natural numbers $\N := \{0, 1, 2, \ldots\}$ to avoid overusing sub-/superscripts. A multi-index of length $k$ is a $k$-tuple $\alpha = (\alpha_1, \ldots, \alpha_k) \in \N^k$; its degree is denoted by $|\alpha| := \sum_i \alpha_i$.

\subsection{Acknowledgements}

I am greatly indebted to Eckhard Meinrenken for his diligent mentorship while supervising my Master's project, of which the present work is an expansion. I am also deeply grateful to Yael Karshon for insightful comments and patient support throughout the preparation of this paper. Feedback from Nic Fellini and Charlie Wu has been instrumental in the process of writing and editing the manuscript. Posthumous thanks are owed to Will J. Merry for the profound impact of his teachings on the way I think and write about mathematics.

This work was partially funded by a University of Toronto Excellence Award, an Ontario Graduate Scholarship, and a doctoral NSERC Canada Graduate Research Scholarship.

\section{Formal geometry}\label{sec:formal-geometry}

We begin by laying out the framework in which we shall obtain a formal version of Moser's trick. Much of the material in this section may be well-known in the language of $\infty$-jets, but we include it for the sake of making this paper self-contained and setting the notation used throughout. We have made an effort to highlight connections to various related concepts in geometry and algebra; in particular, to poly-flow vector fields and locally nilpotent derivations (Remarks \ref{rmk:poly-flow} and \ref{rmk:locally-nilpotent}).

\subsection{Formal functions, diffeomorphisms, and tensor fields}\label{subsec:formal-objects}

We write $\powseries{\K}{x} := \powseries{\K}{x^1, \ldots, x^n}$ for the $\K$-algebra of formal power series in $n$ commuting variables $x^1, \ldots, x^n$ with coefficients in $\K$. Elements of $\powseries{\K}{x}$ are formal infinite sums
\begin{align*}
    \sum_{\alpha \in \N^n} c_\alpha x^\alpha, \quad \text{where } c_\alpha \in \K \text{ and }x^\alpha := (x^1)^{\alpha_1} \cdots (x^n)^{\alpha_n}.
\end{align*}
Such expressions are intended to be evocative of the Taylor series of (germs of) smooth functions,\footnote{This is true on the nose for $\K = \R$, as the $\R$-algebra homomorphism $C^\infty(\R^n) \to \powseries{\R}{x}$ sending a smooth function to its formal Taylor series centred at the origin is surjective by Borel's lemma \cite[\S1.5]{narasimhan_analysis_1985}.} so we will often call them \textbf{formal functions} on $\K^n$ and denote them by $f$ or $f(x)$ interchangably. The ``constant term'' $c_{(0, \ldots, 0)}$ is then denoted by $f(0)$; the map $f \mapsto f(0)$ defines a surjective $\K$-algebra homomorphism $\powseries{\K}{x} \to \K$. We extend this notation to $n$-tuples of formal power series $\varphi(x) = \big(\varphi^1(x), \ldots, \varphi^n(x)\big)$ component-wise by setting $\varphi(0) := \big(\varphi^1(0), \ldots, \varphi^n(0)\big)$, and similarly for matrices with entries in $\powseries{\K}{x}$.

Since $\K$ is a local integral domain, so too is $\powseries{\K}{x}$. Its unique maximal ideal $\maxideal$ consists of those power series $f$ such that $f(0) = 0$; equivalently, $\maxideal$ is generated by the variables $x^1, \ldots, x^n$. We equip $\powseries{\K}{x}$ with the \emph{$\maxideal$-adic topology}, which is the translation-invariant topology such that the decreasing filtration by ideals
\begin{align*}
    \powseries{\K}{x} = \maxideal^0 \supset \maxideal^1 \supset \maxideal^2 \supset \cdots
\end{align*}
forms a basis of open neighbourhoods of the zero power series. We adopt the Big Theta notation
\begin{align*}
    \Theta(f) := \max\{k \mid f \in \maxideal^k\} = \min\{|\alpha| \mid c_\alpha \neq 0\}
\end{align*}
to denote the \emph{order} of a power series $f(x) = \sum_\alpha c_\alpha x^\alpha$, with the convention that the zero power series has order $\infty$. Since $\K$ is an integral domain, one has $\Theta(fg) = \Theta(f) + \Theta(g)$ \cite[IV\S4.8]{bourbaki_algebra_2003}. We extend this notation to $n$-tuples of power series $\varphi(x) = \big(\varphi^1(x), \ldots, \varphi^n(x)\big)$ by defining $\Theta(\varphi) := \min_i\Theta(\varphi^i)$.

The $\maxideal$-adic topology on $\powseries{\K}{x}$ is induced by a complete metric; for example, the one given by $(f, g) \mapsto 2^{-\Theta(f - g)}$. A sequence of power series $f_k(x) = \sum_\alpha c_{k, \alpha} x^\alpha$ is Cauchy with respect to some (equivalently, every) compatible complete metric if and only if, for each $\alpha$, the sequence of coefficients $c_{k, \alpha}$ is eventually constant; the coefficient of $x^\alpha$ in the limiting power series is then given by $\lim_{k \to \infty} c_{k, \alpha}$. An infinite sum $\sum_k^\infty f_k$ converges in the $\maxideal$-adic topology if and only if $f_k \to 0$, or equivalently, $\Theta(f_k) \to \infty$.

The $\K$-algebra $\powseries{\K}{x}$ together with its $\maxideal$-adic topology satisfies a certain universal property, of which we state a special case \cite[IV\S4.3]{bourbaki_algebra_2003}:
\begin{center}
    \emph{There is a bijective correspondence
    {\arraycolsep=0.0pt
    \begin{align}\label{eq:K[[x]]-univ-prop}
        \left\{\begin{array}{c}
            \text{continuous }\K\text{-algebra endomorphisms} \\
            \varphi \colon \powseries{\K}{x} \to \powseries{\K}{x}
        \end{array}\right\}
        \leftrightarrow
        \left\{
        \begin{array}{c}
            n\text{-tuples of elements of }\maxideal \\
            \left(\varphi^1(x), \ldots, \varphi^n(x)\right)
        \end{array}
        \right\},
    \end{align}
    }one determining the other by defining $\varphi(x^i) = \varphi^i(x)$ for each $i$.}
\end{center}
The power series $\varphi^i(x)$ is called the \emph{$i$\textsuperscript{th} component} of $\varphi$ (or the corresponding $n$-tuple). Continuity with respect to the $\maxideal$-adic topology ensures that the image of any power series under $\varphi$ may be computed ``term-by-term'', even with infinitely many non-zero terms. For example, one always has
\begin{align}\label{eq:homogeneous-polynomial-decomposition}
    \sum_{\alpha \in \N^n} c_\alpha x^\alpha = \sum_{k = 0}^\infty \Bigg(\sum_{|\alpha| = k} c_\alpha x^\alpha\Bigg),
\end{align}
where the left-hand side is an element of $\powseries{\K}{x}$ and the right-hand side is a convergent limit of partial sums of a sequence in $\powseries{\K}{x}$; continuity allows us to compute
\begin{align*}
    \varphi\bigg(\sum_\alpha c_\alpha x^\alpha\bigg) = \sum_{k = 0}^\infty \varphi\Bigg(\sum_{|\alpha| = k} c_\alpha x^\alpha\Bigg) = \sum_\alpha c_\alpha \varphi(x^1)^{\alpha_1}\cdots\varphi(x^n)^{\alpha_n}.
\end{align*}
In this sense, $\varphi$ can be thought of as ``pullback of formal functions'' (but see Remark \ref{rmk:pullback-notation}).

\begin{remark}\label{rmk:adic-continuity}
    Actually, for any power series algebras $\powseries{\K}{x} = \powseries{\K}{x^1, \ldots, x^n}$ and $\powseries{\K}{y} = \powseries{\K}{y^1, \ldots, y^m}$ over $\K$ (possibly in different sets of variables), every $\K$-algebra homomorphism $\varphi \colon \powseries{\K}{x} \to \powseries{\K}{y}$ is \emph{automatically} continuous with respect to the $\maxideal$-adic topologies, where $\maxideal$ refers to the maximal ideals in both $\powseries{\K}{x}$ and $\powseries{\K}{y}$. To see this, recall that a power series with coefficients in any commutative ring $R$ is invertible if and only if its constant term is invertible in $R$. For each $i \in \{1, \ldots, n\}$, let $c^i \in \K$ be the constant term of $\varphi(x^i)$. Since $\varphi$ is $\K$-linear and unital, we have
    \begin{align*}
        \varphi(x^i - c^i) = \varphi(x^i) - c^i \in \maxideal_y.
    \end{align*}
    Thus $\varphi(x^i - c^i)$ is not invertible in $\powseries{\K}{y}$; as $\varphi$ is a unital ring homomorphism, this implies that $x^i - c^i$ is not invertible in $\powseries{\K}{x}$. Since $\K$ is a field, we obtain $c^i = 0$. Thus $\varphi(x^i) \in \maxideal_y$ for each $i$; as the $x^i$'s generate $\maxideal_x$, we conclude that $\varphi(\maxideal_x) \subseteq \maxideal_y$, so $\varphi$ is $\maxideal$-adically continuous.

    This argument relies on $\K$ being a field. Over more general commutative rings, $\maxideal$-adically discontinuous homomorphisms of power series algebras can exist and are more complicated to describe; see \cite{eakin_r-endomorphisms_1976} and the references within. We explicitly mention continuity in the bijective correspondence \eqref{eq:K[[x]]-univ-prop} only to emphasize its utility in future computations.
\end{remark}

A diffeomorphism from a smooth manifold to itself defines an automorphism of its algebra of smooth functions via pullback. In this spirit, we define a \textbf{formal diffeomorphism}\footnote{Strictly speaking, what we call a ``formal diffeomorphism'' more precisely reflects the Taylor series of a smooth diffeomorphism at a \emph{fixed point}.} of $\K^n$ to be a $\K$-algebra automorphism of $\powseries{\K}{x}$. Thus, a formal diffeomorphism determines a unique $n$-tuple of elements of $\maxideal$ via the bijective correspondence \eqref{eq:K[[x]]-univ-prop}. Conversely, given any $n$-tuple $\big(\varphi^1(x), \ldots, \varphi^n(x)\big)$ of elements of $\maxideal$, there is a simple criterion to determine when the corresponding endomorphism $\varphi \colon \powseries{\K}{x} \to \powseries{\K}{x}$ is invertible (hence an automorphism). It mimics the inverse function theorem: $\varphi$ is an automorphism if and only if the Jacobian matrix of formal partial derivatives
\begin{align*}
    D\varphi(x) :=
    \begin{pmatrix}
        \frac{\p \varphi^1}{\p x^1}(x) & \ldots & \frac{\p \varphi^1}{\p x^n}(x) \\
        \vdots & \ddots & \vdots \\
        \frac{\p \varphi^n}{\p x^1}(x) & \ldots & \frac{\p \varphi^n}{\p x^n}(x)
    \end{pmatrix}
\end{align*}
is invertible in $\Mat_n(\powseries{\K}{x})$. As the constant term of $\det D\varphi(x)$ is $\det D\varphi(0)$, this is also equivalent to $D\varphi(0)$ being invertible in $\Mat_n(\K)$ \cite[IV\S4.7]{bourbaki_algebra_2003}.

Next, we develop the formal versions of smooth tensor fields. The goal is to mimic smooth expressions at every step; as such, the reader would not be led too far astray by importing the smooth version of each definition/construction and replacing the algebra of smooth functions with the algebra of formal functions everywhere. Nevertheless, we shall describe the main players to eliminate the guesswork needed and mention their salient features.

Smooth vector fields act as derivations of a manifold's algebra of smooth functions. Analogously, we define a \textbf{formal vector field} on $\K^n$ to be a derivation of the algebra of formal functions, i.e., a $\K$-linear map $X \colon \powseries{\K}{x} \to \powseries{\K}{x}$ satisfying the Leibniz rule:
\begin{align*}
    X(fg) = X(f)g + fX(g), \qquad \forall f, g \in \powseries{\K}{x}.
\end{align*}
We denote the set of formal vector fields by $\vf$; it is both a module over $\powseries{\K}{x}$ and a Lie algebra over $\K$ via the commutator Lie bracket $[X, Y] := X \circ Y - Y \circ X$. Every formal vector field is continuous with respect to the $\maxideal$-adic topology; a corollary of this fact is that $\vf$ is free of rank $n$ over $\powseries{\K}{x}$, with a basis given by the set of formal partial derivatives $\left\{\coordvf{x^1}, \ldots, \coordvf{x^n}\right\}$ \cite[IV\S4.6]{bourbaki_algebra_2003}.

We define a \textbf{formal covector field} on $\K^n$ to be an element of the dual $\powseries{\K}{x}$-module
\begin{align*}
    \Omega^1 := \Hom_{\powseries{\K}{x}}(\vf, \powseries{\K}{x}).
\end{align*}
Every formal function $f \in \powseries{\K}{x}$ has a \emph{formal differential} $df \in \Omega^1$, defined by $df(X) = X(f)$ for $X \in \vf$. The set $\{dx^1, \ldots, dx^n\}$ is the dual basis to $\left\{\coordvf{x^1}, \ldots, \coordvf{x^n}\right\}$. Thus, every $X \in \vf$ can be uniquely written in the form $X = \sum_i X^i\coordvf{x^i}$ where $X^i = X(x^i) = dx^i(X)$, and every $\omega \in \Omega^1$ can be uniquely written in the form $\omega = \sum_i \omega_i\,dx^i$ where $\omega_i = \omega(\coordvf{x^i})$. In particular, $df = \sum_i \f{\p f}{\p x^i}\,dx^i$. (See Remark \ref{rmk:kahler-differential-comparison} for a comparison of formal covector fields and K\"{a}hler differentials.)

Having defined formal vector and covector fields, we define a \textbf{formal tensor field of type $(r, s)$} on $\K^n$ to be an element of the tensor product of $\powseries{\K}{x}$-modules
\begin{align*}
    \tensor[r, s] := \underbrace{\vf \otimes \cdots \otimes \vf}_{r \text{ times}} \otimes \underbrace{\Omega^1 \otimes \cdots \otimes \Omega^1}_{s \text{ times}},
\end{align*}
with the convention that $\tensor[0, 0] := \powseries{\K}{x}$. Since $\vf$ and $\Omega^1$ are free of finite rank, formal tensor fields $A \in \tensor[r, s]$ are canonically identifiable with $\powseries{\K}{x}$-multilinear maps
\begin{align*}
    A \colon \underbrace{\Omega^1 \times \cdots \times \Omega^1}_{r \text{ times}} \times \underbrace{\vf \times \cdots \times \vf}_{s \text{ times}} \to \powseries{\K}{x}.
\end{align*}
The tensor product operation $\otimes$ endows
\begin{align*}
    \tensor := \bigoplus_{r, s \geq 0} \tensor[r, s]
\end{align*}
with the structure of a graded $\powseries{\K}{x}$-algebra. A \textbf{formal tensor derivation} is a $\K$-linear map $\mathcal{D} \colon \tensor \to \tensor$ that preserves type, commutes with all contraction homomorphisms $\tensor[r, s] \to \tensor[r - 1, s - 1]$ (defined by feeding one of the $\vf$ factors to one of the $\Omega^1$ factors), and satisfies
\begin{align*}
    \mathcal{D}(A \otimes B) = \mathcal{D}(A) \otimes B + A \otimes \mathcal{D}(B), \qquad \forall A, B \in \tensor.
\end{align*}
Two formal tensor derivations are identical if they agree on formal functions and formal vector fields. Conversely, every pair of $\K$-linear maps $\mathcal{D}^{(0, 0)} \colon \powseries{\K}{x} \to \powseries{\K}{x}$ and $\mathcal{D}^{(1, 0)} \colon \vf \to \vf$ satisfying
\begin{align*}
    \mathcal{D}^{(0, 0)}(fg) & = \mathcal{D}^{(0, 0)}(f)g + f\mathcal{D}^{(0, 0)}(g), & \mathcal{D}^{(1, 0)}(fX) & = \mathcal{D}^{(0, 0)}(f)X + f\mathcal{D}^{(1, 0)}(X)
\end{align*}
for all $f, g \in \powseries{\K}{x}$ and $X \in \vf$ extends uniquely to a formal tensor derivation. In particular, given $X \in \vf$, the \textbf{formal Lie derivative} $\lie_X \colon \tensor \to \tensor$ is defined to be the unique formal tensor derivation extending $\lie_X^{(0, 0)} f := X(f)$ and $\lie_X^{(1, 0)} Y := [X, Y]$. The map $X \mapsto \lie_X$ is $\K$-linear.

Just as diffeomorphisms pull back smooth tensor fields, a formal diffeomorphism $\varphi \colon \powseries{\K}{x} \to \powseries{\K}{x}$ extends to a graded $\K$-algebra automorphism $\varphi^* \colon \tensor \to \tensor$, called the \textbf{pullback}, by requiring $\varphi^*$ to distribute across $\otimes$ and commute with $d \colon \powseries{\K}{x} \to \Omega^1$ and all contractions. That is, for formal functions $f \in \tensor[0, 0]$, define $\varphi^*f = \varphi(f)$. For formal vector fields $X \in \tensor[1, 0]$, define $\varphi^*X \in \tensor[1, 0]$ to be the composition
\begin{align*}
    \powseries{\K}{x} \xrightarrow{\varphi^{-1}} \powseries{\K}{x} \xrightarrow{X} \powseries{\K}{x} \xrightarrow{\varphi} \powseries{\K}{x},
\end{align*}
so that $(\varphi^*X)(\varphi^*f) = \varphi^*\big(X(f)\big)$ for all $f \in \powseries{\K}{x}$. For formal covector fields $\omega \in \tensor[0, 1]$, define $\varphi^*\omega \in \tensor[0, 1]$ to be the composition
\begin{align*}
    \vf \xrightarrow{(\varphi^*)^{-1}} \vf \xrightarrow{\omega} \powseries{\K}{x} \xrightarrow{\varphi} \powseries{\K}{x},
\end{align*}
so that $(\varphi^*\omega)(\varphi^*X) = \varphi^*\big(\omega(X)\big)$ for all $X \in \vf$. For formal tensor fields of arbitrary type $A \in \tensor[r, s]$, define $\varphi^*A \in \tensor[r, s]$ as a $\powseries{\K}{x}$-multilinear map by requiring that
\begin{align*}
    (\varphi^*A)(\varphi^*\omega_1, \ldots, \varphi^*\omega_r, \varphi^*X_1, \ldots, \varphi^*X_s) = \varphi^*\big(A(\omega_1, \ldots, \omega_r, X_1, \ldots, X_s)\big)
\end{align*}
for all $\omega_1, \ldots, \omega_r \in \Omega^1$ and $X_1, \ldots, X_s \in \vf$.

A few comparisons to the smooth setting assure us that these formal analogues are sensibly defined. Firstly, $\varphi^*$ restricts to a Lie algebra automorphism $\vf \to \vf$. Secondly, pullbacks are compatible with Lie derivatives: for any $X \in \vf$, the composition $\varphi^* \circ \lie_X \circ (\varphi^*)^{-1}$ is a formal tensor derivation that agrees with $\lie_{\varphi^*X}$ on formal functions and formal vector fields, so they must agree on all formal tensor fields; that is,
\begin{align}\label{eq:pullback-lie-derivative}
    \varphi^*(\lie_X A) = \lie_{\varphi^*X}(\varphi^*A), \qquad \forall A \in \tensor.
\end{align}
Lastly, pullbacks look as expected ``in coordinates'', which we demonstrate with formal vector fields $X = \sum_i X^i\coordvf{x^i}$ and formal covector fields $\omega = \sum_i \omega_i\,dx^i$. If we denote by $(\varphi^*X)(x) \in \powseries{\K}{x}^n$ the $n$-tuple of components of $\varphi^*X$ with respect to the basis $\{\coordvf{x^i}\}$ and we denote by $X\big(\varphi(x)\big)$ the $n$-tuple $\big(\varphi(X^1), \ldots, \varphi(X^n)\big) \in \powseries{\K}{x}^n$, then since $\varphi(X^i) = (\varphi^*X)(\varphi^*x^i) = \sum_j (\varphi^*X)(x^j)\f{\p \varphi^i}{\p x^j}$, one finds that
\begin{align*}
    X\big(\varphi(x)\big) & = D\varphi(x)\big[(\varphi^*X)(x)\big].
\end{align*}
As $D\varphi(x)$ is invertible in $\Mat_n(\powseries{\K}{x})$, this can be rewritten as
\begin{align*}
    (\varphi^*X)(x) = D\varphi(x)^{-1}\big[X\big(\varphi(x)\big)\big],
\end{align*}
mimicking the usual pointwise formula for pullbacks of smooth vector fields along diffeomorphisms. Similarly, if we denote by $(\varphi^*\omega)(x) \in \powseries{\K}{x}^n$ the $n$-tuple of components of $\varphi^*\omega$ with respect to the basis $\{dx^i\}$ and we denote by $\omega\big(\varphi(x)\big)$ the $n$-tuple $\big(\varphi(\omega_1), \ldots, \varphi(\omega_n)\big) \in \powseries{\K}{x}^n$, then by computing $(\varphi^*\omega)\left(\coordvf{x^i}\right)$, one finds that $\varphi^*\omega$ is given by transposition:
\begin{align*}
    (\varphi^*\omega)(x) = D\varphi(x)^\transpose\big[\omega\big(\varphi(x)\big)\big].
\end{align*}

\begin{remark}[terminology and notation]\label{rmk:pullback-notation}
    The ``coordinate'' calculations above justify why the map on formal tensor fields $\varphi^* \colon \tensor \to \tensor$ is called ``pullback'' and denoted with an upper star. But if $\psi$ is another formal diffeomorphism, then so is $\psi \circ \varphi$, and the pullback maps satisfy $(\psi \circ \varphi)^* = \psi^* \circ \varphi^*$---in apparent conflict with the usual convention that upper stars denote contravariant actions. We will have little need to consider compositions of formal diffeomorphisms, so this should not pose a serious issue. Nevertheless, this can be reconciled with the usual convention by recognizing that formal diffeomorphisms are defined by how they act on formal functions, not on ``formal points''. That is, our notation for pullback is contravariant \emph{with respect to maps of the spectrum of $\powseries{\K}{x}$.}
\end{remark}

\begin{remark}[formal covector fields versus K\"{a}hler differentials]\label{rmk:kahler-differential-comparison}
    Our definitions of $\Omega^1$ and the formal differential $d \colon \powseries{\K}{x} \to \Omega^1$ differ from the $\powseries{\K}{x}$-module of K\"{a}hler differentials $\Omega_{\powseries{\K}{x}/\K}$ and the universal derivation $\delta \colon \powseries{\K}{x} \to \Omega_{\powseries{\K}{x}/\K}$ in a few ways. Firstly, $\Omega_{\powseries{\K}{x}/\K}$ is neither projective nor finitely generated as a $\powseries{\K}{x}$-module \cite[Example 5.5a]{kunz_kahler_1986} whereas our $\Omega^1$ is free of finite rank. Secondly, $d$ satisfies $df = \sum_i \f{\p f}{\p x^i}\,dx^i$ for all $f \in \powseries{\K}{x}$ whereas---intuitively speaking---$\delta$ is only finitely additive, so one should not expect that $\delta f = \sum_i \f{\p f}{\p x^i}\,\delta x^i$ when $f$ has infinitely many non-zero coefficients. Thirdly, the pairs $(\Omega^1, d)$ and $(\Omega_{\powseries{\K}{x}/\K}, \delta)$ satisfy different universal properties \cite[Example 12.7]{kunz_kahler_1986}.

    To explain the relationship, recall the universal property of $(\Omega_{\powseries{\K}{x}/\K}, \delta)$: for every $\powseries{\K}{x}$-module $M$, pre-composition with $\delta$ defines an isomorphism of $\powseries{\K}{x}$-modules
    \begin{align*}
        \delta^\vee \colon \Hom_{\powseries{\K}{x}}(\Omega_{\powseries{\K}{x}/\K}, M) \to \Der_\K(\powseries{\K}{x}, M), \qquad T \mapsto T \circ \delta.
    \end{align*}
    Taking $M = \powseries{\K}{x}$, we see that $\delta^\vee$ identifies the dual $\powseries{\K}{x}$-module $(\Omega_{\powseries{\K}{x}/\K})^\vee$ with $\vf$, and thus $\delta^{\vee\vee}$ identifies $\Omega^1$ with the double dual $(\Omega_{\powseries{\K}{x}/\K})^{\vee\vee}$. As $d \in \Der_\K(\powseries{\K}{x}, \Omega^1)$, there is a unique $T \in \Hom_{\powseries{\K}{x}}(\Omega_{\powseries{\K}{x}/\K}, \Omega^1)$ such that $d = T \circ \delta$. If $\varepsilon \colon \Omega_{\powseries{\K}{x}/\K} \to (\Omega_{\powseries{\K}{x}/\K})^{\vee\vee}$ denotes the canonical homomorphism, given by $\varepsilon(\omega)(\chi) = \chi(\omega)$ for $\omega \in \Omega_{\powseries{\K}{x}/\K}$ and $\chi \in (\Omega_{\powseries{\K}{x}/\K})^\vee$, then by unravelling definitions one obtains the following commutative diagram:
    \begin{equation*}
        \begin{tikzcd}
            \powseries{\K}{x} \arrow[r, "d"] \arrow[d, swap, "\delta"] & \Omega^1 \arrow[d, "\delta^{\vee\vee}"] \\
            \Omega_{\powseries{\K}{x}/\K} \arrow[r, swap, "\varepsilon"] \arrow[ur, "T"] & (\Omega_{\powseries{\K}{x}/\K})^{\vee\vee}
        \end{tikzcd}
    \end{equation*}
    Thus, $T$ is just the canonical homomorphism $\varepsilon$ up to the identification provided by $\delta^{\vee\vee}$.
\end{remark}

\subsection{Formal time dependence}\label{subsec:time-dependence}

We now generalize the formal objects defined so far by allowing them to depend on another formal variable $t$ in addition to the $x^i$'s; the development will closely mirror Section \ref{subsec:formal-objects}. To begin, we introduce the notation $\powseries{\K}{t, x} := \powseries{\K}{t, x^1, \ldots, x^n}$ for the $\K$-algebra of ``time-dependent'' formal functions, from which all other ``time-dependent'' formal objects shall be built. Analogously to $\powseries{\K}{x}$, it is equipped with the $\maxideal_{t, x}$-adic topology where $\maxideal_{t, x}$ is the (unique maximal) ideal generated by $t$ and the $x^i$'s. Thus, any time-dependent formal function
\begin{align*}
    f(t, x) = \sum_{k, \alpha} c_{k, \alpha} t^k x^\alpha
\end{align*}
is in particular equal to the convergent limit of partial sums $\sum_k t^kf_k(x)$ where
\begin{align*}
    f_k(x) = \sum_\alpha c_{k, \alpha}x^\alpha \in \powseries{\K}{x} \text{ for each }k \in \N,
\end{align*}
and we can interchange such infinite sums with continuous operations on $\powseries{\K}{t, x}$.

We distinguish $t$ from the $x^i$'s notationally and psychologically because we will eventually try to ``evaluate time-dependent formal objects at a fixed time'' by replacing $t$ with an element of $\K$ and applying the field operations of $\K$ to make sense of the resulting expressions. The simplest example of this is the (continuous by Remark \ref{rmk:adic-continuity}) $\powseries{\K}{x}$-algebra homomorphism
\begin{align*}
    \eval_0 \colon \powseries{\K}{t, x} \to \powseries{\K}{x}, \qquad f(t, x) \mapsto f(0, x) = f_0(x).
\end{align*}
Of course, substituting non-zero values for $t$ is rarely well-defined, even if $\K$ is topologized to help make sense of the resulting infinite sums---for example, one cannot hope to define an ``evaluation at time $t = 1$'' map on all of $\powseries{\K}{t, x}$, as demonstrated by $\sum_k t^k$. However, we must initially allow arbitrary power series in $t$ to obtain an existence and uniqueness theorem for flows of formal vector fields (Theorem \ref{thm:flow-existence-uniqueness}); thus we develop the following story with all concepts based on $\powseries{\K}{t, x}$.

We define a \textbf{formal isotopy} of $\K^n$ to be a $\K$-algebra automorphism $\varphi \colon \powseries{\K}{t, x} \to \powseries{\K}{t, x}$ such that $\varphi(t) = t$. To see that this is a good formal analogue of a one-parameter family of diffeomorphisms, let us explicitly characterize formal isotopies similarly to how we characterized formal diffeomorphisms. The universal property of $\powseries{\K}{t, x}$ yields a bijective correspondence
{\arraycolsep=0.0pt
\begin{align*}
    \left\{\begin{array}{c}
        \text{(automatically continuous) }\K\text{-algebra} \\
        \text{endomorphisms } \varphi \colon \powseries{\K}{t, x} \to \powseries{\K}{t, x}
    \end{array}\right\}
    \leftrightarrow
    \left\{
    \begin{array}{c}
        (n + 1)\text{-tuples }\left(\varphi^0(t, x), \varphi^1(t, x), \ldots, \varphi^n(t, x)\right) \\
        \text{such that } \varphi^i(0, 0) = 0 \text{ for each } i \in \{0, 1, \ldots, n\}
    \end{array}
    \right\},
\end{align*}
}one determining the other via $\varphi(t) = \varphi^0$ and $\varphi(x^i) = \varphi^i$ for $i \in \{1, \ldots, n\}$. This restricts to a bijective correspondence between $\K$-algebra automorphisms of $\powseries{\K}{t, x}$ and such $(n + 1)$-tuples satisfying the additional condition that the $(n + 1) \times (n + 1)$ Jacobian matrix of partial derivatives of $(\varphi^0, \varphi^1, \ldots, \varphi^n)$ with respect to $t$ and the $x^i$'s is invertible. If $\varphi^0(t, x) = t$ (i.e., the corresponding endomorphism fixes $t$), then the top row of this matrix is $\begin{pmatrix} 1 & 0 & \cdots & 0 \end{pmatrix}$, so this matrix is invertible if and only if the bottom-right $n \times n$ submatrix
\begin{align*}
    D_x\varphi(t, x) :=
    \begin{pmatrix}
        \f{\p \varphi^1}{\p x^1}(t, x) & \cdots & \frac{\p \varphi^1}{\p x^n}(t, x) \\
        \vdots & \ddots & \vdots \\
        \f{\p \varphi^n}{\p x^1}(t, x) & \ldots & \frac{\p \varphi^n}{\p x^n}(t, x)
    \end{pmatrix}
\end{align*}
is invertible in $\Mat_n(\powseries{\K}{t, x})$, or equivalently, $D_x\varphi(0, 0)$ is invertible in $\Mat_n(\K)$. Knowing that $\varphi^0(t, x) = t$, we might as well omit it from such $(n + 1)$-tuples; this yields a bijective correspondence
{\arraycolsep=0.0pt
\begin{align}\label{eq:isotopy-characterization}
    \left\{\begin{array}{c}
        \text{formal isotopies} \\
        \varphi \colon \powseries{\K}{t, x} \to \powseries{\K}{t, x}
    \end{array}\right\}
    \leftrightarrow
    \left\{
    \begin{array}{c}
        n\text{-tuples } \varphi(t, x) \in \powseries{\K}{t, x}^n \text{ such that} \\
        \varphi(0, 0) = 0 \text{ and } D_x\varphi(0, 0) \text{ is invertible}
    \end{array}
    \right\}.
\end{align}
}Compare with how formal diffeomorphisms correspond to certain $n$-tuples in $\powseries{\K}{x}^n$: here, the $n$ component power series are allowed to depend on $t$ but the relevant Jacobian matrix still involves partial derivatives only with respect to the $x^i$'s. We will typically decorate $\K$-algebra endomorphisms of $\powseries{\K}{t, x}$ with a subscript $t$ (as in $\varphi_t$) to emphasize the allowed dependence on $t$.

\begin{remark}\label{rmk:isotopy-tuple-equivalence}
    Any $n$-tuple $\varphi(t, x) \in \powseries{\K}{t, x}^n$ can be written in the form $\varphi(t, x) = \sum_{k = 0}^\infty t^k\varphi_k(x)$ where $\varphi_k(x) \in \powseries{\K}{x}^n$; written this way, $\varphi(t, x)$ corresponds to a formal isotopy if and only if $\Theta(\varphi_0) \geq 1$ and $D\varphi_0(0)$ is invertible in $\Mat_n(\K)$.
\end{remark}

Every $\K$-algebra endomorphism of $\powseries{\K}{x}$ (resp. formal diffeomorphism) extends to a ``time-independent'' $\K$-algebra endomorphism of $\powseries{\K}{t, x}$ (resp. formal isotopy) by declaring it to fix $t$. On the level of $n$-tuples, this is simply the inclusion $\powseries{\K}{x}^n \subset \powseries{\K}{t, x}^n$. Conversely, we define the \emph{$t = 0$ time slice} of any $\K$-algebra endomorphism $\varphi_t$ of $\powseries{\K}{t, x}$ that fixes $t$ by the composition
\begin{align*}
    \varphi_0 \colon \powseries{\K}{x} \hookrightarrow \powseries{\K}{t, x} \xrightarrow{\varphi_t} \powseries{\K}{t, x} \xrightarrow{\eval_0} \powseries{\K}{x}.
\end{align*}
The subscript $0$ notation is consistent with identifying $\varphi_t$ with an $n$-tuple written as in Remark \ref{rmk:isotopy-tuple-equivalence}, as $\varphi_0$ corresponds to the $n$-tuple $\varphi_0(x) \in \powseries{\K}{x}^n$. Time zero slices are compatible with composition:

\begin{lemma}\label{lem:time-zero-slice-composition}
    For all $\K$-algebra endomorphisms $\varphi_t, \psi_t$ of $\powseries{\K}{t, x}$ fixing $t$, we have $(\psi_t \circ \varphi_t)_0 = \psi_0 \circ \varphi_0$. In particular, if $\varphi_t$ is a formal isotopy, then $\varphi_0$ is a formal diffeomorphism and $(\varphi_0)^{-1} = (\varphi_t^{-1})_0$.
\end{lemma}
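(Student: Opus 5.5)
The first claim is that $\eval_0 \circ \psi_t \circ \varphi_t = \psi_0 \circ \varphi_0 \circ \eval_0$ on the image of $\powseries{\K}{x} \hookrightarrow \powseries{\K}{t, x}$. To reduce this to a single intertwining identity, I will show that for any $\K$-algebra endomorphism $\chi_t$ of $\powseries{\K}{t, x}$ fixing $t$,
\begin{align*}
    \eval_0 \circ \chi_t = \chi_0 \circ \eval_0 \quad \text{on all of } \powseries{\K}{t, x}.
\end{align*}
Given this, the claim follows at once:
\begin{align*}
    (\psi_t \circ \varphi_t)_0 = \eval_0 \circ \psi_t \circ \varphi_t|_{\powseries{\K}{x}} = \psi_0 \circ \eval_0 \circ \varphi_t|_{\powseries{\K}{x}} = \psi_0 \circ \varphi_0.
\end{align*}

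To prove the intertwining identity, I compare two $\K$-algebra homomorphisms $\powseries{\K}{t, x} \to \powseries{\K}{x}$. Both are continuous by Remark \ref{rmk:adic-continuity}, so by the universal property they agree as soon as they agree on the generators $t, x^1, \ldots, x^n$. On $t$, the left side gives $\eval_0(\chi_t(t)) = \eval_0(t) = 0$, and the right side gives $\chi_0(0) = 0$. On $x^i$, the left side gives $\eval_0(\chi_t(x^i))$, which is $\chi_0(x^i)$ by the definition of the time slice. The right side gives $\chi_0(\eval_0(x^i)) = \chi_0(x^i)$. Equivalently, at the level of $n$-tuples, substituting $\chi(t, x)$ into $f(t, x)$ and then setting $t = 0$ is the same as setting $t = 0$ first and substituting $\chi(0, x)$, because $\chi$ fixes $t$. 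This identity is the main point, and it is where continuity is needed, since $\chi_t$ acts on infinite sums of the form $\sum_k t^k f_k(x)$.

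For the second claim, let $\varphi_t$ be a formal isotopy. Its inverse $\varphi_t^{-1}$ also fixes $t$, because $\varphi_t(t) = t$. The identity automorphism of $\powseries{\K}{t, x}$ has time slice $\id_{\powseries{\K}{x}}$, since $\eval_0$ restricted to $\powseries{\K}{x}$ is the identity. Applying the first part to $\varphi_t \circ \varphi_t^{-1}$ and to $\varphi_t^{-1} \circ \varphi_t$ then gives
\begin{align*}
    \varphi_0 \circ (\varphi_t^{-1})_0 = \id \quad\text{and}\quad (\varphi_t^{-1})_0 \circ \varphi_0 = \id.
\end{align*}
Hence $\varphi_0$ is a $\K$-algebra automorphism of $\powseries{\K}{x}$, that is, a formal diffeomorphism, with inverse $(\varphi_t^{-1})_0$.
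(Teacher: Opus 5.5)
Your proof is correct. Its core is the same fact the paper relies on: $\eval_0 \circ \chi_t$ depends only on $\eval_0$ of its input, i.e.\ $\eval_0 \circ \chi_t = \chi_0 \circ \eval_0$. Where you differ is in packaging and justification. The paper never states this intertwining identity. It shows that $\varphi_t \circ \imath - \imath \circ \varphi_0$ takes values in $\ker(\eval_0)$, and that $\psi_t$ maps $\ker(\eval_0)$ into itself, so $\eval_0 \circ \psi_t$ kills the difference. You isolate the identity for a single $\chi_t$ and prove it by comparing two $\K$-algebra homomorphisms $\powseries{\K}{t,x} \to \powseries{\K}{x}$ on the generators $t, x^1, \ldots, x^n$, using automatic continuity (Remark \ref{rmk:adic-continuity}) and the universal property. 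Your version makes the composition law a one-line consequence, and it also writes out the ``in particular'' part, which the paper leaves implicit. One correction to your commentary: this is not really where continuity is needed. Since $\ker(\eval_0) = t\,\powseries{\K}{t,x}$ is principal and $\chi_t(tg) = t\,\chi_t(g)$, writing $f = \imath(\eval_0 f) + tg$ gives $\eval_0(\chi_t(f)) = \chi_0(\eval_0 f)$ by pure algebra. So neither your argument nor the paper's (which also cites continuity at this step) actually requires it.
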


\begin{proof}
    Denote the inclusion by $\imath \colon \powseries{\K}{x} \hookrightarrow \powseries{\K}{t, x}$. As $\eval_0 \circ\,\imath$ is the identity map on $\powseries{\K}{x}$, we have
    \begin{align*}
        \eval_0 \circ\,\varphi_t \circ \imath = \varphi_0 = \eval_0 \circ\,\imath \circ \varphi_0.
    \end{align*}
    Thus, the image of $\varphi_t \circ \imath - \imath \circ \varphi_0$ is contained in the kernel of $\eval_0$. Since $\psi_t$ is continuous and fixes $t$, it maps $\ker(\eval_0)$ to itself, so for all $f \in \powseries{\K}{x}$,
    \begin{align*}
        (\psi_t \circ \varphi_t)_0(f) - (\psi_0 \circ \varphi_0)(f) & = \eval_0\left[\psi_t\Big((\varphi_t \circ \imath)(f) - (\imath \circ \varphi_0)(f)\Big)\right] = 0.
    \end{align*}
    Thus $(\psi_t \circ \varphi_t)_0 = \psi_0 \circ \varphi_0$.
\end{proof}

\begin{remark}[terminology]
    Given an algebra $A$, algebra homomorphisms $A \to \powseries{A}{t}$ are sometimes called \emph{Hasse--Schmidt derivations} of $A$. Thus, any algebra endomorphism of $\powseries{\K}{t, x}$ can be regarded as a Hasse--Schmidt derivation of $\powseries{\K}{x}$ by restricting its domain, and any Hasse--Schmidt derivation of $\powseries{\K}{x}$ extends to an algebra endomorphism of $\powseries{\K}{t, x}$ by declaring it to fix $t$. We prefer to avoid this terminology, as the results to come are more naturally expressed in terms of self-maps of $\powseries{\K}{t, x}$ than in terms of maps $\powseries{\K}{x} \to \powseries{\K}{t, x}$.
\end{remark}

Before defining time-dependent formal tensor fields on $\K^n$, we note that by treating $t$ as ``just another formal variable'' no different from the $x^i$'s, one obtains formal tensor fields on $\K^{n + 1}$: the $\powseries{\K}{t, x}$-module of derivations of $\powseries{\K}{t, x}$, its dual, tensor products of these, and the associated definitions of Lie derivatives and pullbacks (along formal diffeomorphisms of $\K^{n + 1}$). Morally speaking, time-dependent formal tensor fields on $\K^n$ are simply formal tensor fields on $\K^n$, except that when written in terms of tensor products of $\coordvf{x^i}$'s and $dx^i$'s, their component power series are also allowed to depend on $t$. Equivalently, they are formal tensor fields on $\K^{n + 1}$ that do not involve $\coordvf{t}$ or $dt$.

A ``coordinate-free'' way to implement this is by declaring a \textbf{time-dependent formal tensor field of type $(r, s)$} on $\K^n$ to be an element of the $\powseries{\K}{t, x}$-module obtained from the $\powseries{\K}{x}$-module $\tensor[r, s]$ via extension of scalars along the inclusion $\imath \colon \powseries{\K}{x} \hookrightarrow \powseries{\K}{t, x}$:
\begin{align*}
    \tensor[r, s]_t := \powseries{\K}{t, x} \otimes_{\powseries{\K}{x}} \tensor[r, s].
\end{align*}
We denote by $\tdvf := \tensor[1, 0]_t$ and $\Omega^1_t := \tensor[0, 1]_t$ the time-dependent formal vector and covector fields, respectively, which are free $\powseries{\K}{t, x}$-modules with bases $\left\{\coordvf{x^1}, \ldots, \coordvf{x^n}\right\}$ and $\{dx^1, \ldots, dx^n\}$ by construction. In particular, $\tdvf$ is identifiable with the set of derivations of $\powseries{\K}{t, x}$ that vanish on $t$, which is a $\powseries{\K}{t, x}$-submodule and Lie subalgebra of the set of all derivations of $\powseries{\K}{t, x}$. Since $\tensor[r, s]$ is free of finite rank over $\powseries{\K}{x}$, the definition of $\tensor[r, s]_t$ is compatible with thinking of it as $(\tdvf)^{\otimes r} \otimes (\Omega^1_t)^{\otimes s}$ (the tensor products being taken over $\powseries{\K}{t, x}$), or as $\powseries{\K}{t, x}$-multilinear maps
\begin{align*}
    \underbrace{\Omega^1_t \times \cdots \times \Omega^1_t}_{r \text{ times}} \times \underbrace{\tdvf \times \cdots \times \tdvf}_{s \text{ times}} \to \powseries{\K}{t, x}.
\end{align*}
As with formal isotopies, time-dependent formal tensor fields may be decorated with a subscript $t$ ($X_t$, $\omega_t$, etc.) to serve as a reminder that their components are power series in both $t$ and the $x^i$'s.

By tensoring with the identity maps $\tensor[r, s] \to \tensor[r, s]$, the inclusion $\imath \colon \powseries{\K}{x} \hookrightarrow \powseries{\K}{t, x}$ induces inclusions $\imath \otimes \id \colon \tensor[r, s] \hookrightarrow \tensor[r, s]_t$ of ``time-independent'' formal tensor fields into time-dependent formal tensor fields. For $(r, s) = (1, 0)$, this simply means that every derivation $\powseries{\K}{x} \to \powseries{\K}{x}$ can be extended to a derivation $\powseries{\K}{t, x} \to \powseries{\K}{t, x}$ by declaring it to vanish on $t$. For $(r, s) = (0, 1)$, this means that every $\powseries{\K}{x}$-linear map $\vf \to \powseries{\K}{x}$ can be post-composed with $\imath$ to obtain an element of $\Hom_{\powseries{\K}{x}}(\vf, \powseries{\K}{t, x}) \cong \Hom_{\powseries{\K}{t, x}}(\tdvf, \powseries{\K}{t, x}) \cong \Omega^1_t$ (using the universal property of extension of scalars). In a basis, this inclusion means: write out any $A \in \tensor[r, s]$ in terms of tensor products of $\coordvf{x^i}$'s and $dx^i$'s, and simply regard each component as a power series in $\powseries{\K}{t, x}$ instead of $\powseries{\K}{x}$.

Conversely, the image of a time-dependent formal tensor field $A_t \in \tensor[r, s]_t$ under the projection $\eval_0 \otimes \id \colon \tensor[r, s]_t \to \tensor[r, s]$ is called its \emph{$t = 0$ time slice} and denoted by $A_0$. Regarding $A_t$ as a $\powseries{\K}{t, x}$-multilinear map
\begin{align*}
    A_t \colon \underbrace{\Omega^1_t \times \cdots \times \Omega^1_t}_{r \text{ times}} \times \underbrace{\tdvf \times \cdots \times \tdvf}_{s \text{ times}} \to \powseries{\K}{t, x},
\end{align*}
this corresponds to pre-composing each factor in the domain with one of the inclusions $\vf \hookrightarrow \tdvf$ or $\Omega^1 \hookrightarrow \Omega^1_t$ just described, then post-composing with $\eval_0$ to obtain the $\powseries{\K}{x}$-multilinear map
\begin{align*}
    A_0 \colon \underbrace{\Omega^1 \times \cdots \times \Omega^1}_{r \text{ times}} \times \underbrace{\vf \times \cdots \times \vf}_{s \text{ times}} \to \powseries{\K}{x}.
\end{align*}
Equivalently, write out $A_t$ in the $\powseries{\K}{t, x}$-basis of tensor products of $\coordvf{x^i}$'s and $dx^i$'s, then evaluate each of its component power series at $t = 0$; this is $A_0 \in \tensor[r, s]$.

The identification of $\tdvf$ with a Lie subalgebra of the set of all derivations of $\powseries{\K}{t, x}$ implies that, for all $X_t \in \tdvf$, the formal Lie derivative $\lie_{X_t}$ (regarded as a formal tensor derivation in one dimension higher) preserves the set of time-dependent formal tensor fields on $\K^n$. In fact, more is true: the Lie bracket $\f{dX_t}{dt} := \big[\coordvf{t}, X_t\big]$ is also in $\tdvf$ even though $\coordvf{t} \notin \tdvf$, so the \emph{time derivative} operator
\begin{align*}
    \f{d}{dt} := \lie_{\coordvf{t}}
\end{align*}
also preserves $\tensor[r, s]_t$. We obtain a split short exact sequence of $\powseries{\K}{x}$-modules
\begin{align*}
    0 \to \tensor[r, s] \xhookrightarrow{\imath \otimes \id} \tensor[r, s]_t \xrightarrow{\f{d}{dt}} \tensor[r, s]_t \to 0,
\end{align*}
which is to say that $A_t \in \tensor[r, s]_t$ is time-independent if and only if $\f{dA_t}{dt} = 0$, if and only if its component power series are all time-independent. When this is the case, we may identify $A_t$ with its $t = 0$ time slice.

Regarding a formal isotopy $\varphi_t \colon \powseries{\K}{t, x} \to \powseries{\K}{t, x}$ of $\K^n$ as a formal diffeomorphism of $\K^{n + 1}$, the pullback $\varphi_t^*$ of formal tensor fields on $\K^{n + 1}$ preserves the set of time-dependent formal tensor fields on $\K^n$. (For example, regarding time-dependent formal vector fields $X_t \in \tdvf$ as derivations of $\powseries{\K}{t, x}$ that vanish on $t$, we have $(\varphi_t^*X_t)(t) = \varphi_t\big(X_t(t)\big) = 0$, so $\varphi_t^*X_t \in \tdvf$.) Given $A_t \in \tensor[r, s]_t$, let us temporarily denote the $t = 0$ time slice of the pullback $\varphi_t^*A_t$ by $(\varphi_t^*A_t)|_{t = 0}$. As one might expect, this coincides with the pullback of the time-independent formal tensor field $A_0$ along the formal diffeomorphism $\varphi_0 \colon \powseries{\K}{x} \to \powseries{\K}{x}$ of $\K^n$ (Lemma \ref{lem:time-zero-slice-composition}):
\begin{align}\label{eq:time-0-eval}
    (\varphi_t^*A_t)|_{t = 0} = \varphi_0^*A_0.
\end{align}
Section \ref{subsec:fixed-time-evaluation} will be dedicated to formulating definitions to make sense of this equality for non-zero values of $t$, provided that $A_t$ and $\varphi_t$ are ``evaluative'' (Proposition \ref{prop:evaluative-properties}).

\subsection{Formal flows}\label{subsec:formal-flows}

Just as smooth vector fields generate flows, so too do their formal versions. Of course, existence and uniqueness of formal solutions to ordinary differential equations is neither new to state nor difficult to prove: one can simply regard a formal vector field $X$ as a tuple of power series and manually find the power series solution to $\dot{x}(t) = X\big(x(t)\big)$ with initial value prescribed at $t = 0$ by comparing coefficients on both sides. However, we could not locate a formulation of this theorem in the literature that both regards formal flows as acting on formal functions and also considers the resulting action on formal tensor fields. Moreover, formal flows of time-dependent vector fields seem less commonly discussed, but will be essential for our version of Moser's trick. As such, we supply a statement and a more conceptual proof.

\begin{theorem}\label{thm:flow-existence-uniqueness}
    For every $X_t \in \tdvf$, there exists a unique formal isotopy $\varphi_t \colon \powseries{\K}{t, x} \to \powseries{\K}{t, x}$, called the \textbf{flow} of $X_t$, satisfying the following two properties:
    \begin{enumerate}
        \item[\namedlabel{(IC)}{itm:initial-condition}] Initial condition: its $t = 0$ time slice $\varphi_0 \colon \powseries{\K}{x} \to \powseries{\K}{x}$ is the identity map.
        \item[\namedlabel{(FP)}{itm:flow-property}] Flow property: for all $f \in \powseries{\K}{x}$, we have
        \begin{align*}
            \coordvf{t} \big(\varphi_t(f)\big) = \varphi_t\big(X_t(f)\big).
        \end{align*}
    \end{enumerate}
\end{theorem}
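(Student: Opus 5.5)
First, I will reduce the problem to a statement about the $n$-tuple $\varphi(t, x) \in \powseries{\K}{t, x}^n$. Because $\varphi_t$ is continuous and fixes $t$, it is determined by $\varphi^i := \varphi_t(x^i)$. Both sides of \ref{itm:flow-property} are derivations $\powseries{\K}{x} \to \powseries{\K}{t, x}$ along $\varphi_t$. Indeed, $f \mapsto \coordvf{t}\big(\varphi_t(f)\big)$ and $f \mapsto \varphi_t\big(X_t(f)\big)$ both satisfy the twisted Leibniz rule $D(fg) = D(f)\varphi_t(g) + \varphi_t(f)D(g)$. They are also continuous. So they agree on all of $\powseries{\K}{x}$ as soon as they agree on the generators $x^i$. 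Writing $X_t = \sum_i X^i_t \coordvf{x^i}$, property \ref{itm:flow-property} is therefore equivalent to the system
\begin{align*}
    \f{\p \varphi^i}{\p t}(t, x) = \varphi_t\big(X^i_t\big) = X^i\big(t, \varphi(t, x)\big), \qquad i = 1, \ldots, n.
\end{align*}
Here the right-hand side means substituting $\varphi^j$ for $x^j$ while keeping $t$ fixed. Condition \ref{itm:initial-condition} says $\varphi(0, x) = x$.

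To solve this system, expand $\varphi(t, x) = \sum_k t^k \varphi_k(x)$ as in Remark \ref{rmk:isotopy-tuple-equivalence}, with $\varphi_0(x) = x$ forced by \ref{itm:initial-condition}. The key observation is that the coefficient of $t^k$ in $X^i\big(t, \varphi(t, x)\big)$ depends only on $\varphi_0, \ldots, \varphi_k$. Write $X^i_t = \sum_m t^m X^i_m(x)$. Then
\begin{align*}
    X^i\big(t, \varphi(t,x)\big) = \sum_m t^m X^i_m\Big(\textstyle\sum_j t^j\varphi_j(x)\Big),
\end{align*}
and the inner substitution is a continuous operation. Modulo $t^{k+1}$, only the $\varphi_j$ with $j \le k$ contribute. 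Comparing coefficients of $t^k$ then gives the recursion
\begin{align*}
    (k + 1)\varphi_{k + 1} = \big[t^k\big]\, X\big(t, \varphi_0 + t\varphi_1 + \cdots + t^k\varphi_k\big).
\end{align*}
We can divide by $k + 1$ because $\K$ has characteristic zero. This step is the main use of the hypothesis on $\K$, and I expect it to be the only real obstacle. The recursion uniquely determines every $\varphi_{k+1}$, which gives both existence and uniqueness of the tuple.

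It remains to check that this tuple defines a formal isotopy and that substitution makes sense. One subtlety is that $X^i_m(x)$ need not lie in $\maxideal$, so $\varphi_k$ for $k \ge 1$ may have nonzero constant terms. This causes no trouble. The bijection preceding \eqref{eq:isotopy-characterization} only requires $\varphi^i(0,0) = 0$, and here $\varphi(0,0) = \varphi_0(0) = 0$. Moreover, $D_x\varphi(0, 0) = D\varphi_0(0)$ is the identity matrix, so the endomorphism determined by the tuple is an automorphism fixing $t$, i.e. a formal isotopy. The composite $X^i(t, \varphi(t,x))$ is exactly $\varphi_t(X^i_t)$, which is well defined because $\varphi_t$ is a continuous endomorphism of $\powseries{\K}{t, x}$. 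This justifies the coefficient bookkeeping above: since $\varphi_t(t) = t$, we have $\varphi_t(t^{k+1}\powseries{\K}{t,x}) \subseteq t^{k+1}\powseries{\K}{t,x}$, so reducing modulo $t^{k+1}$ is compatible with applying $\varphi_t$. Finally, the $t = 0$ time slice is $\varphi_0 = \id$, and the reduction in the first paragraph shows that \ref{itm:flow-property} holds for all $f$.
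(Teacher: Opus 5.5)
Your proof is correct, but it takes a different route from the paper's. You work directly with the component tuple. You reduce (FP) to the system $\partial_t\varphi^i = X^i\big(t,\varphi(t,x)\big)$ via the twisted-derivation argument, and then solve it by the recursion $(k+1)\varphi_{k+1} = [t^k]\,X(t,\varphi_0+\cdots+t^k\varphi_k)$. This handles time-dependent $X_t$ at once and gives existence and uniqueness simultaneously.

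The paper explicitly mentions this coefficient-comparison approach and sets it aside in favour of a more conceptual proof in three steps:
\begin{enumerate}
\item[(1)] Uniqueness: it shows that the subalgebras $S_m=\{f \mid t^m \text{ divides } \varphi_t(f)-\psi_t(f)\}$ exhaust $\powseries{\K}{t,x}$.
\item[(2)] Existence for time-independent $X$: it uses the explicit exponential $e^{tX}=\sum_k \frac{t^k}{k!}X^{\circ k}$, with multiplicativity from the higher Leibniz rule and inverse $e^{-tX}$.
\item[(3)] Existence for time-dependent $X_t$: it suspends to $\widetilde{X}=\coordvf{s}+X_s$ on $\K^{n+1}$, flows, and sets $s=0$. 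The flow property is checked via divisibility by $s-t$.
\end{enumerate}

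Your key observation, that the $t^k$-coefficient of $\varphi_t(X^i_t)$ depends only on $\varphi_0,\ldots,\varphi_k$, is the same mechanism as the subalgebra argument in the paper's Step 1. The justification you cite for it, that $\varphi_t$ preserves $t^{k+1}\powseries{\K}{t,x}$, is not quite the statement you need. What you need is that two substitutions whose tuples agree modulo $t^{k+1}$ give results agreeing modulo $t^{k+1}$. This is immediate monomial by monomial, since $a^\alpha-b^\alpha\in(a^1-b^1,\ldots,a^n-b^n)$ and $t^{k+1}\powseries{\K}{t,x}$ is closed.

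Your route is shorter and more elementary, and it isolates the use of characteristic zero in the division by $k+1$. The paper's route produces the closed formula $e^{tX}$ for autonomous flows, which it reuses later (Remark \ref{rmk:exponential-recursion}, Example \ref{ex:euler-non-evaluative}, Remark \ref{rmk:locally-nilpotent}). It also keeps the argument at the level of operators on formal functions rather than coordinate tuples.
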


\begin{remark}\label{rmk:flow-to-vf}
    Conversely, every formal isotopy $\varphi_t$ such that $\varphi_0 = \id_{\powseries{\K}{x}}$ is generated by a unique time-dependent formal vector field $X_t \in \tdvf$, but this is easy to see: the formula
    \begin{align*}
        X_t \colon \powseries{\K}{t, x} \to \powseries{\K}{t, x}, \qquad X_t(f) := \varphi_t^{-1}\left(\coordvf{t}\big(\varphi_t(f)\big)\right) - \f{\p f}{\p t}
    \end{align*}
    defines a derivation of $\powseries{\K}{t, x}$ that vanishes on $t$, and the flow property holds for all $f \in \powseries{\K}{x}$ by construction. (The significance of the extra $-\f{\p f}{\p t}$ term will be explained by Proposition \ref{prop:tensor-flow-property}.)
\end{remark}

\begin{proof}
    \textbf{Step 1: uniqueness.} Given formal isotopies $\varphi_t$, $\psi_t$ satisfying \ref{itm:initial-condition} and \ref{itm:flow-property}, observe that
    \begin{align*}
        S_m := \{f \in \powseries{\K}{t, x} \mid t^m \text{ divides }\varphi_t(f) - \psi_t(f)\} \qquad (m \in \N)
    \end{align*}
    is a $\K$-subalgebra of $\powseries{\K}{t, x}$ by linearity and the identity $\varphi_t(fg) - \psi_t(fg) = \varphi_t(f)\big[\varphi_t(g) - \psi_t(g)\big] + \psi_t(g)\big[\varphi_t(f) - \psi_t(f)\big]$. Moreover, $\bigcap_m S_m$ is exactly the set on which $\varphi_t$ and $\psi_t$ agree. We will prove by induction that $S_m = \powseries{\K}{t, x}$ for each $m$, hence $\varphi_t = \psi_t$.

    The base case $m = 0$ is vacuously true. For the inductive step, assume $m \in \N$ is such that $S_m = \powseries{\K}{t, x}$, so that in particular $X_t(x^i) \in S_m$ for each $i$. We claim that $S_{m + 1}$ contains $t$ and each $x^i$; thus $S_{m + 1}$ contains the polynomial subalgebra $\K[t, x] := \K[t, x^1, \ldots, x^n]$; linearity and continuity then implies that $S_{m + 1} = \powseries{\K}{t, x}$. Since formal isotopies fix $t$, we have $t \in S_{m + 1}$. By \ref{itm:initial-condition}, each of the power series $\Delta^i(t, x) := \varphi_t(x^i) - \psi_t(x^i)$ satisfy $\Delta^i(0, x) = 0$. By \ref{itm:flow-property},
    \begin{align*}
        \f{\p \Delta^i}{\p t} = \varphi_t\big(X_t(x^i)\big) - \psi_t\big(X_t(x^i)\big),
    \end{align*}
    so $t^m$ divides $\f{\p \Delta^i}{\p t}$. Combining these two facts, we see that $t^{m + 1}$ divides $\Delta^i$, hence $x^i \in S_{m + 1}$.

    \textbf{Step 2: existence for time-independent vector fields $X$.} We claim that the flow of $X$ is given by the formal exponential
    \begin{align*}
        e^{tX} \colon \powseries{\K}{t, x} \to \powseries{\K}{t, x}, \qquad e^{tX}(f) := \sum_{k = 0}^\infty \f{t^k}{k!}X^{\circ k}(f),
    \end{align*}
    where $X^{\circ k} := X \circ \cdots \circ X$ denotes the $k$-fold iteration of $X$ (the identity for $k = 0$). Observe that:
    \begin{itemize}
        \item Each term in the infinite sum is well-defined ($k!$ is invertible as $\K$ is a field of characteristic zero) and the sum converges in the $\maxideal_{t, x}$-adic topology as $t^kX^{\circ k}(f) \in \maxideal_{t, x}^k$ for each $k$.
        \item Since each $X^{\circ k}$ is $\K$-linear, so is $e^{tX}$. It is clearly unital, and the higher-order Leibniz rule $X^{\circ k}(fg) = \sum_{i = 0}^k \binom{k}{i}X^{\circ i}(f)X^{\circ (k - i)}(g)$ implies that
        \begin{align*}
            e^{tX}(fg) & = \sum_{k = 0}^\infty \sum_{i = 0}^k\f{t^k}{i!(k - i)!}X^{\circ i}(f)X^{\circ(k - i)}(g) \\
            & = \sum_{k = 0}^\infty \sum_{\substack{i, j \geq 0 \\ i + j = k}} \f{t^{i + j}}{i!j!}X^{\circ i}(f)X^{\circ j}(g) \\
            & = \left(\sum_{i = 0}^\infty \f{t^i}{i!}X^{\circ i}(f)\right)\left(\sum_{j = 0}^\infty \f{t^j}{j!}X^{\circ j}(g)\right) \\
            & = e^{tX}(f)e^{tX}(g),
        \end{align*}
        so $e^{tX}$ is a $\K$-algebra homomorphism.
        \item Using the identity $\sum_{i = 0}^k \f{(-1)^i}{i!(k - i)!} = 0$ for $k > 0$, we compute
        \begin{align*}
            e^{t(-X)}\big(e^{tX}(f)\big) & = \sum_{i = 0}^\infty \sum_{j = 0}^\infty \f{t^{i + j}}{i!j!}(-1)^iX^{\circ (i + j)}(f) = \sum_{k = 0}^\infty t^kX^{\circ k}(f) \sum_{i = 0}^k \f{(-1)^i}{i!(k - i)!} = f.
        \end{align*}
        Reversing the roles of $X$ and $-X$, we see that $e^{tX}$ is inverted by $e^{t(-X)}$.
        \item Since $X^{\circ k}(t) = 0$ for all $k > 0$, we have $e^{tX}(t) = t$, so $e^{tX}$ is a formal isotopy of $\K^n$.
        \item Setting $t = 0$ in the exponential formula immediately shows that $e^{tX}$ satisfies the initial condition \ref{itm:initial-condition}. For the flow property \ref{itm:flow-property}, fix $f \in \powseries{\K}{x}$. Continuity of $\coordvf{t}$ yields
        \begin{align*}
            \coordvf{t} \big(e^{tX}(f)\big) & = \sum_{k = 0}^\infty \coordvf{t}\left(\f{t^k}{k!}X^{\circ k}(f)\right) = \sum_{k = 0}^\infty \f{k}{k!}t^{k - 1}X^{\circ k}(f) + \sum_{k = 0}^\infty \f{t^k}{k!} \coordvf{t}\big(X^{\circ k}(f)\big).
        \end{align*}
        The second sum on the right-hand side vanishes since $X$ and $f$ are time-independent: $\big[\coordvf{t}, X\big] = 0$, hence $\coordvf{t}\big(X^{\circ k}(f)\big) = X^{\circ k}\big(\f{\p f}{\p t}\big) = 0$. Re-indexing the first sum then yields
        \begin{align*}
            \coordvf{t}\big(e^{tX}(f)\big) & = \sum_{k = 0}^\infty \f{t^k}{k!}X^{\circ (k + 1)}(f) = e^{tX}\big(X(f)\big).
        \end{align*}
    \end{itemize}

    \textbf{Step 3: existence for time-dependent vector fields $X_t$.} We mimic the standard procedure for converting a non-autonomous system of $n$ ordinary differential equations into an autonomous system of $n + 1$ equations. That is, temporarily introduce an additional formal variable $s$ and let $X_s$ denote the same vector field as $X_t$, but with every instance of $t$ replaced with $s$. Then $\widetilde{X} := \coordvf{s} + X_s$ is a derivation of $\powseries{\K}{s, x}$---a time-\emph{independent} formal vector field on $\K^{n + 1}$---which has a unique flow $\Phi_t = e^{t\widetilde{X}} \colon \powseries{\K}{t, s, x} \to \powseries{\K}{t, s, x}$ by Steps 1 and 2. We claim that the desired flow of $X_t$ is given by the composition
    \begin{align*}
        \varphi_t \colon \powseries{\K}{t, x} \hookrightarrow \powseries{\K}{t, s, x} \xrightarrow{\Phi_t} \powseries{\K}{t, s, x} \xrightarrow{\text{set } s = 0} \powseries{\K}{t, x}.
    \end{align*}
    Each map in this composition is a $\K$-algebra homomorphism that fixes $t$, so the same is true of $\varphi_t$. To see that $\varphi_t$ is invertible (hence a formal isotopy), observe that its $i$\textsuperscript{th} component is
    \begin{align*}
        \Phi_t(x^i)\big|_{s = 0} = e^{t\widetilde{X}}(x^i)\big|_{s = 0} = x^i + t\widetilde{X}(x^i)\big|_{s = 0} + \f{t^2}{2!}\widetilde{X}^{\circ 2}(x^i)\big|_{s = 0} + \cdots,
    \end{align*}
    from which it follows that $D_x\varphi(0, 0)$ is the identity matrix.

    To show that $\varphi_t$ satisfies the initial condition \ref{itm:initial-condition}, observe that $\varphi_0$ is given by following the top arrows in the diagram below; it is equal to the identity on $\powseries{\K}{x}$ because the diagram commutes:
    \begin{equation*}
        \begin{tikzcd}
            & \powseries{\K}{t, x} \arrow[rd, hookrightarrow] \arrow[rrr, "\varphi_t"] & & & \powseries{\K}{t, x} \arrow[rd, swap, "t = 0"] \\
            \powseries{\K}{x} \arrow[ru, hookrightarrow] \arrow[rd, hookrightarrow] & & \powseries{\K}{t, s, x} \arrow[r, "\Phi_t"] & \powseries{\K}{t, s, x} \arrow[ru, swap, "s = 0"] \arrow[rd, "t = 0"] & & \powseries{\K}{x} \\
            & \powseries{\K}{s, x} \arrow[ru, hookrightarrow] \arrow[rrr, "\id"] & & & \powseries{\K}{s, x} \arrow[ru, "s = 0"]
        \end{tikzcd}
    \end{equation*}
    (The bottom trapezoid commuting amounts to the fact that $\Phi_t$ satisfies \ref{itm:initial-condition} on $\powseries{\K}{s, x}$.)

    To show that $\varphi_t$ satisfies the flow property \ref{itm:flow-property}, first note that $\widetilde{X}(s) = 1$ and $\widetilde{X}^{\circ k}(s) = 0$ for $k > 1$, hence
    \begin{align*}
        \Phi_t(s) = \sum_{k = 0}^\infty \f{t^k}{k!}\widetilde{X}^{\circ k}(s) = s + t.
    \end{align*}
    Observe that $S := \{f \in \powseries{\K}{x} \mid s - t \text{ divides } \widetilde{X}(f) - X_t(f)\}$ is a $\K$-subalgebra of $\powseries{\K}{x}$ by linearity and the Leibniz rule. Moreover, $x^i \in S$ for each $i$ as the terms of $\widetilde{X}(x^i) - X_t(x^i)$ can be grouped into pairs that all have a factor of $s^k - t^k = (s - t)(s^{k - 1} + s^{k - 2}t + \cdots + st^{k - 2} + t^{k - 1})$. Thus $S$ contains the polynomial algebra $\K[x^1, \ldots, x^n]$; linearity and continuity then implies that $S = \powseries{\K}{x}$. Given $f \in \powseries{\K}{x}$, writing $\widetilde{X}(f) - X_t(f) = (s - t)g$ for some $g \in \powseries{\K}{t, s, x}$ yields
    \begin{align*}
        \Phi_t\big(\widetilde{X}(f) - X_t(f)\big) = \Phi_t(s - t)\Phi_t(g) = s\Phi_t(g).
    \end{align*}
    Since $\coordvf{t}$ commutes with setting $s = 0$ and $\Phi_t$ satisfies the flow property on $\powseries{\K}{s, x}$ (but with the vector field $\widetilde{X}$), it follows that
    \begin{align*}
        \coordvf{t}\big(\varphi_t(f)\big) & = \coordvf{t}\left(\Phi_t(f)\big|_{s = 0}\right) \\
        & = \left[\coordvf{t}\big(\Phi_t(f)\big)\right]\bigg|_{s = 0} \\
        & = \Phi_t\big(\widetilde{X}(f)\big)\big|_{s = 0} \\
        & = \big[\Phi_t\big(X_t(f)\big) + s\Phi_t(g)\big]\big|_{s = 0} \\
        & = \varphi_t\big(X_t(f)\big).
    \end{align*}
\end{proof}

The flow property for time-independent formal functions generalizes to a flow property for all formal tensor fields (time-independent or dependent), yielding a formal version of Equation \eqref{eq:pullback-rate-of-change}:

\begin{proposition}\label{prop:tensor-flow-property}
    If $\varphi_t$ is any formal isotopy satisfying the flow property \ref{itm:flow-property} for $X_t$, then
    \begin{align*}
        \f{d}{dt}(\varphi_t^*A_t) = \varphi_t^*\left(\f{dA_t}{dt} + \lie_{X_t} A_t\right), \qquad \forall A_t \in \tensor[r, s]_t.
    \end{align*}
\end{proposition}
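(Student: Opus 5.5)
The plan is to compare the two sides through the operator
\begin{align*}
    \mathcal{D}(A_t) := (\varphi_t^*)^{-1}\left(\f{d}{dt}(\varphi_t^*A_t)\right) - \f{dA_t}{dt}, \qquad A_t \in \tensor[r, s]_t.
\end{align*}
It suffices to show $\mathcal{D} = \lie_{X_t}$ on every $\tensor[r, s]_t$. The first step is to check that $\mathcal{D}$ is a tensor derivation on time-dependent formal tensor fields. Here $\f{d}{dt} = \lie_{\coordvf{t}}$ and $\varphi_t^*$ both preserve type and commute with contractions, since each is a tensor derivation, respectively a pullback, on $\K^{n+1}$ that preserves the time-dependent tensors. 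Also, $\varphi_t^*$ is multiplicative for $\otimes$ and $\f{d}{dt}$ satisfies Leibniz, so their conjugate difference satisfies Leibniz over $\otimes$. Therefore $\mathcal{D}$ and $\lie_{X_t}$ are both tensor derivations of $\bigoplus_{r,s}\tensor[r, s]_t$. By the time-dependent analogue of the uniqueness statement for tensor derivations, it then suffices to compare them on $\powseries{\K}{t, x}$ and on $\tdvf$. That analogue holds by the same argument: a covector $\omega_t$ is determined by its contractions with the $\coordvf{x^i}$, and a general tensor by contractions against the bases $\{\coordvf{x^i}\}$ and $\{dx^i\}$.

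On functions, $\mathcal{D}(f) = \varphi_t^{-1}\big(\coordvf{t}\varphi_t(f)\big) - \f{\p f}{\p t}$, which is exactly the derivation of Remark \ref{rmk:flow-to-vf}. It is a derivation of $\powseries{\K}{t, x}$, since it is the difference of the conjugated derivation $\varphi_t^{-1}\circ\coordvf{t}\circ\varphi_t$ and $\coordvf{t}$. It kills $t$, because $\varphi_t(t) = t$. On each $x^i$ it equals $X_t(x^i)$ by \ref{itm:flow-property}. A derivation of $\powseries{\K}{t, x}$ is continuous, so it is determined by its values on $t, x^1, \ldots, x^n$; hence $\mathcal{D}(f) = X_t(f)$ for all $f \in \powseries{\K}{t, x}$, including $t$-dependent ones. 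Equivalently, we obtain the operator identity
\begin{align*}
    \coordvf{t}\circ\varphi_t = \varphi_t\circ\left(\coordvf{t} + X_t\right) \quad \text{on } \powseries{\K}{t, x}.
\end{align*}

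On vector fields, I would compute directly using $\varphi_t^*Y_t = \varphi_t\circ Y_t\circ\varphi_t^{-1}$ and $\f{d}{dt}Z = [\coordvf{t}, Z]$. The operator identity above also gives $\varphi_t^{-1}\circ\coordvf{t} = (\coordvf{t} + X_t)\circ\varphi_t^{-1}$. Substituting both identities,
\begin{align*}
    \left[\coordvf{t}, \varphi_t Y_t\varphi_t^{-1}\right] = \varphi_t\left[\coordvf{t} + X_t, Y_t\right]\varphi_t^{-1} = \varphi_t^*\left(\f{dY_t}{dt} + \lie_{X_t}Y_t\right).
\end{align*}
Thus $\mathcal{D} = \lie_{X_t}$ on $\tdvf$, and the uniqueness step finishes the proof.

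I expect the main obstacle to be the bookkeeping rather than any computation. Specifically, one must justify that $\varphi_t^*$ and $\f{d}{dt}$ restricted to time-dependent tensors on $\K^n$ behave as claimed, i.e.\ they commute with the contractions internal to $\tensor_t$, and that uniqueness of tensor derivations holds in this time-dependent setting. To settle this, I would view everything as tensors on $\K^{n+1}$ and note that the contractions on $\tensor_t$ are restrictions of those on $\K^{n+1}$.
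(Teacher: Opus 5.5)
Your proposal is correct and is essentially the paper's argument. Both proofs reduce everything to the operator identity $\coordvf{t}\circ\varphi_t = \varphi_t\circ\left(\coordvf{t} + X_t\right)$ on $\powseries{\K}{t, x}$: you obtain it by noting that $\varphi_t^{-1}\circ\coordvf{t}\circ\varphi_t - \coordvf{t}$ is a derivation agreeing with $X_t$ on $t, x^1, \ldots, x^n$, while the paper expands $f = \sum_k t^kf_k$ and applies the flow property termwise. Your separate vector-field computation and time-dependent uniqueness step can be skipped, as the paper does, by citing Equation \eqref{eq:pullback-lie-derivative} on $\K^{n+1}$, which shows directly that $\mathcal{D} = \lie_{(\varphi_t^{-1})^*\coordvf{t} - \coordvf{t}}$.
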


\begin{proof}
    As pullbacks and Lie derivatives are compatible (Equation \eqref{eq:pullback-lie-derivative}), we have
    \begin{align*}
        \varphi_t^*\left(\f{dA_t}{dt} + \lie_{X_t} A_t\right) = \varphi_t^*\left(\lie_{\coordvf{t}}A_t + \lie_{X_t}A_t\right) = \lie_{\varphi_t^*\left(\coordvf{t} + X_t\right)}(\varphi_t^*A_t).
    \end{align*}
    Thus it suffices to show that $\coordvf{t} = \varphi_t^*\left(\coordvf{t} + X_t\right)$, or equivalently, that $\coordvf{t} \circ \varphi_t = \varphi_t \circ \left(\coordvf{t} + X_t\right)$ as operators on $\powseries{\K}{t, x}$. Given $f \in \powseries{\K}{t, x}$, write $f(t, x) = \sum_{k = 0}^\infty t^kf_k(x)$ where each $f_k(x) \in \powseries{\K}{x}$ does not depend on $t$. As $\coordvf{t}$ and $\varphi_t$ are continuous and $\varphi_t$ fixes $t$, we have
    \begin{align*}
        \coordvf{t} \big(\varphi_t(f)\big) & = \sum_k \coordvf{t} \big(t^k\varphi_t(f_k)\big) = \sum_k kt^{k - 1}\varphi_t(f_k) + \sum_k t^k\coordvf{t} \big(\varphi_t(f_k)\big).
    \end{align*}
    The first sum is equal to $\varphi_t\left(\f{\p f}{\p t}\right)$. For the second sum, apply \ref{itm:flow-property} to each $f_k$:
    \begin{align*}
        \coordvf{t} \big(\varphi_t(f)\big) & = \varphi_t\left(\f{\p f}{\p t}\right) + \sum_k t^k\varphi_t\big(X_t(f_k)\big) = \varphi_t\bigg(\f{\p f}{\p t} + \sum_k t^kX_t(f_k)\bigg) = \varphi_t\left(\f{\p f}{\p t} + X_t(f)\right),
    \end{align*}
    the last step using the Leibniz rule, the fact that $X_t(t^k) = 0$ for all $k \in \N$, and continuity of $X_t$.
\end{proof}

\begin{corollary}
    If $\varphi_t$ is the flow of $X_t$, then $\varphi_t^{-1}$ is the flow of $-\varphi_t^*X_t$.
\end{corollary}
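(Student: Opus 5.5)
The plan is to check directly that $\psi_t := \varphi_t^{-1}$ satisfies the initial condition \ref{itm:initial-condition} and the flow property \ref{itm:flow-property} for the time-dependent formal vector field $Y_t := -\varphi_t^*X_t$. The uniqueness part of Theorem \ref{thm:flow-existence-uniqueness} then identifies $\psi_t$ with the flow of $Y_t$.

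\textbf{Preliminaries.} First, $\psi_t$ is a $\K$-algebra automorphism of $\powseries{\K}{t, x}$ fixing $t$, so it is a formal isotopy. Second, $Y_t \in \tdvf$, by the remark in Section \ref{subsec:time-dependence} that $\varphi_t^*$ preserves $\tdvf$: indeed $(\varphi_t^*X_t)(t) = \varphi_t\big(X_t(t)\big) = 0$.

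\textbf{Initial condition.} Since $\varphi_t$ is the flow of $X_t$, we have $\varphi_0 = \id$. Lemma \ref{lem:time-zero-slice-composition} then gives
\begin{align*}
    (\psi_t)_0 = (\varphi_t^{-1})_0 = (\varphi_0)^{-1} = \id_{\powseries{\K}{x}}.
\end{align*}

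\textbf{Flow property.} The key input is the operator identity established inside the proof of Proposition \ref{prop:tensor-flow-property}:
\begin{align*}
    \coordvf{t} \circ \varphi_t = \varphi_t \circ \left(\coordvf{t} + X_t\right) \quad \text{on } \powseries{\K}{t, x}.
\end{align*}
It is valid for any formal isotopy satisfying \ref{itm:flow-property}. Composing with $\varphi_t^{-1}$ on both sides gives $\varphi_t^{-1} \circ \coordvf{t} = \left(\coordvf{t} + X_t\right) \circ \varphi_t^{-1}$. Rearranging,
\begin{align*}
    \coordvf{t} \circ \varphi_t^{-1} = \varphi_t^{-1} \circ \coordvf{t} - X_t \circ \varphi_t^{-1} = \varphi_t^{-1} \circ \left(\coordvf{t} - \varphi_t \circ X_t \circ \varphi_t^{-1}\right).
\end{align*}
By the definition of pullback of vector fields (extended to $\K^{n+1}$), we have $\varphi_t \circ X_t \circ \varphi_t^{-1} = \varphi_t^*X_t$. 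Applying the identity to a time-independent $f \in \powseries{\K}{x}$ kills the $\coordvf{t} f$ term, leaving
\begin{align*}
    \coordvf{t}\big(\psi_t(f)\big) = \psi_t\big(-(\varphi_t^*X_t)(f)\big) = \psi_t\big(Y_t(f)\big),
\end{align*}
which is \ref{itm:flow-property} for $Y_t$.

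\textbf{Main point of care.} The only step needing attention is the bookkeeping of which pullback convention is in force (Remark \ref{rmk:pullback-notation}). Here $\varphi_t^*X_t$ is the composition $\varphi_t \circ X_t \circ \varphi_t^{-1}$, with $\varphi_t$ regarded as a formal diffeomorphism of $\K^{n+1}$. I expect this to be a matter of unwinding definitions rather than a real obstacle.
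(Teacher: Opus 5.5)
Your argument is correct and is essentially the paper's: both get the initial condition from Lemma \ref{lem:time-zero-slice-composition} and the flow property from the identity $\coordvf{t} = \varphi_t^*\bigl(\coordvf{t} + X_t\bigr)$ established in the proof of Proposition \ref{prop:tensor-flow-property}. The only difference is packaging. The paper reads off the generator of $\varphi_t^{-1}$ as $\varphi_t^*\bigl(\coordvf{t}\bigr) - \coordvf{t}$ using Remark \ref{rmk:flow-to-vf}, whereas you verify the flow property directly and then invoke the uniqueness part of Theorem \ref{thm:flow-existence-uniqueness}.
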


\begin{proof}
    By Lemma \ref{lem:time-zero-slice-composition} and Theorem \ref{thm:flow-existence-uniqueness}\ref{itm:initial-condition}, we have $(\varphi_t^{-1})_0 = (\varphi_0)^{-1} = \id_{\powseries{\K}{x}}$. By Remark \ref{rmk:flow-to-vf}, $\varphi_t^{-1}$ is the flow of the time-dependent formal vector field
    \begin{align*}
        Y_t := \varphi_t \circ \coordvf{t} \circ \varphi_t^{-1} - \coordvf{t} = \varphi_t^*\left(\coordvf{t}\right) - \coordvf{t}.
    \end{align*}
    The proof of Proposition \ref{prop:tensor-flow-property} showed that $\coordvf{t} = \varphi_t^*\left(\coordvf{t} + X_t\right) = \varphi_t^*\left(\coordvf{t}\right) + \varphi_t^*X_t$, so $Y_t = -\varphi_t^*X_t$.
\end{proof}
\begin{remark}\label{rmk:exponential-recursion}
    We recount an argument that explains where the exponential formula comes from, as similar strategies will be employed to prove later results. If $\varphi_t$ is any formal isotopy satisfying \ref{itm:initial-condition} and \ref{itm:flow-property} for a time-independent formal vector field $X$, then by the $(r, s) = (1, 0)$ case of Proposition \ref{prop:tensor-flow-property},
    \begin{align*}
        \f{d}{dt} (\varphi_t^*X) = \varphi_t^*\left(\f{dX}{dt} + [X, X]\right) = 0.
    \end{align*}
    Thus $\varphi_t^*X = (\varphi_t^*X)|_{t = 0} = X$ by Equation \eqref{eq:time-0-eval} and the initial condition. As $\varphi_t$ is continuous and fixes $t$, it suffices to determine how it acts on the subalgebra $\powseries{\K}{x} \subset \powseries{\K}{t, x}$. Given $f \in \powseries{\K}{x}$, write $\varphi_t(f) = \sum_{k = 0}^\infty t^kf_k$ where each $f_k \in \powseries{\K}{x}$ does not depend on $t$. Then the left-hand side of \ref{itm:flow-property} is
    \begin{align*}
        \coordvf{t} \big(\varphi_t(f)\big) = \sum_{k = 0}^\infty \coordvf{t}\big(t^kf_k\big) = \sum_{k = 0}^\infty (k + 1)t^kf_{k + 1},
    \end{align*}
    while the right-hand side is
    \begin{align*}
        \varphi_t\big(X(f)\big) & = (\varphi_t^*X)(\varphi_t^*f) = X\big(\varphi_t(f)\big) = \sum_{k = 0}^\infty t^kX(f_k).
    \end{align*}
    For \ref{itm:flow-property} to hold, the coefficient power series of $t^k$ on both sides must be equal for all $k$, from which we deduce the recursion relation $(k + 1)f_{k + 1} = X(f_k)$. The base case $f_0 = \varphi_0(f) = f$ is provided by \ref{itm:initial-condition}; it follows by induction that $f_k = \f{1}{k!}X^{\circ k}(f)$, hence $\varphi_t(f) = e^{tX}(f)$.
\end{remark}

\subsection{Fixed-time evaluation}\label{subsec:fixed-time-evaluation}

We now revisit the fundamental question posed in Section \ref{subsec:main-results}---or rather, its formal analogue:
\begin{center}
    \emph{Given formal tensor fields $A_0, A_1 \in \tensor[r, s]$, when does there exist\\a formal diffeomorphism $\varphi \colon \powseries{\K}{x} \to \powseries{\K}{x}$ such that $\varphi^*A_1 = A_0$?}
\end{center}
To emulate Moser's trick, one would like to view $A_0$ and $A_1$ as time slices at $t = 0$ and $t = 1$ of a one-parameter family of formal tensor fields $A_t \colon \K \to \tensor[r, s]$; ditto for $\varphi$. However, Moser-type arguments involve time derivatives such as $\f{dA_t}{dt}$ and $\f{d}{dt}(\varphi_t^*A_t)$, and it is unclear how to make sense of these for arbitrary functions $\K \to \tensor[r, s]$ over an arbitrary field of characteristic zero. On the other hand, the work of Section \ref{subsec:time-dependence} gives meaning to these expressions if $A_t \in \tensor[r, s]_t$ is a time-dependent formal tensor field and $\varphi_t \colon \powseries{\K}{t, x} \to \powseries{\K}{t, x}$ is a formal isotopy, and the work of Section \ref{subsec:formal-flows} allows us to say that the equation $\f{d}{dt}(\varphi_t^*A_t) = 0$ is equivalent to the equation
\begin{align*}
    \f{dA_t}{dt} + \lie_{X_t} A_t = 0, \quad \text{where $X_t \in \tdvf$ generates $\varphi_t$ (assuming that $\varphi_0 = \id_{\powseries{\K}{x}}$).}
\end{align*}
But then comes another issue, which we have already alluded to: while time slices of $A_t$, $\varphi_t$, and $X_t$ at $t = 0$ are perfectly well-defined, replacing the formal variable $t$ with any non-zero $\tau \in \K$ immediately leads to complications, which we illustrate with some examples in dimension $n = 1$.

\begin{example}\label{ex:d/dx-non-evaluative}
    Even if substituting $t = \tau$ into a formal isotopy yields well-defined power series in $x$, there are two distinct ways in which the result can fail to define a formal diffeomorphism. One way is illustrated by the flow of the formal vector field $\coordvf{x}$, which is given by $\varphi_t(x) = x + t$: replacing $t$ with any $\tau \neq 0$ yields a power series with non-zero constant term, which cannot correspond to a $\K$-algebra endomorphism of $\powseries{\K}{x}$ via \eqref{eq:K[[x]]-univ-prop} at all. Another way is illustrated by the formal isotopy given by $\psi_t(x) = x(1 - t)$: substituting $t = 1$ yields the zero power series, which defines a $\K$-algebra endomorphism of $\powseries{\K}{x}$ that is not invertible. Relatedly, since $\psi_t^{-1}(x) = x(1 - t)^{-1} = x\sum_{k = 0}^\infty t^k$, the pullback of $\coordvf{x}$ along $\psi_t$ is $\sum_{k = 0}^\infty t^k\coordvf{x}$, which is ill-defined at $t = 1$.
\end{example}

\begin{example}\label{ex:euler-non-evaluative}
    Regarding the Euler vector field $x\coordvf{x}$ as a formal vector field, its flow is given by
    \begin{align}\label{eq:euler-flow}
        \varphi_t(x) = \sum_{k = 0}^\infty \f{t^k}{k!}x.
    \end{align}
    One might try to make sense of ``substituting $t = 1$ into $\varphi_t$'' by declaring $\varphi_1$ to correspond via \eqref{eq:K[[x]]-univ-prop} to the reasonable-looking expression $\sum_{k = 0}^\infty \f{1}{k!}x$. However, \emph{this infinite sum does not converge in the $\maxideal$-adic topology on $\powseries{\K}{x}$}, as $\Theta(\f{1}{k!}x) = 1$ for all $k$. If the base field comes with a topology such that $\sum_k \f{1}{k!}$ converges to a unique $e \in \K$ (e.g., $\K$ contains the \emph{real number} $e = 2.718\ldots$), then one could declare $\varphi_1$ to be the formal diffeomorphism given by the power series $\varphi_1(x) := ex \in \powseries{\K}{x}$. We will not do this for both practical and philosophical reasons.

    Practically speaking, $\K$ need not be topologized; even if it is, there may be power series that diverge for \emph{all} non-zero $\tau \in \K$. (For example, Euler noted how the series $\sum_{k = 1}^\infty (k - 1)!x^k$, which diverges for all non-zero $x \in \R$, appears when studying the system of differential equations $\dot{x} = x^2$ and $\dot{y} = y - x$ \cite[\S24]{arnold_geometrical_1988}.) Philosophically speaking, using a topology on $\K$ is incongruous with our approach: determining whether an infinite sum converges is generally a difficult task, and the entire \textit{raison d'\^{e}tre} for working with formal power series is to blithely ignore such concerns. Indeed, under the identification of sets $\powseries{\K}{x} \cong \K^{\N^n}$ associating $\sum_\alpha c_\alpha x^\alpha \in \powseries{\K}{x}$ to the function $\N^n \to \K$, $\alpha \mapsto c_\alpha$, the $\maxideal$-adic topology on $\powseries{\K}{x}$ corresponds to the product topology on $\K^{\N^n}$ when $\K$ is equipped with the \emph{discrete} topology. Thus $\powseries{\K}{x}$ does not see any interesting topology on $\K$ anyways.
\end{example}

While we cannot interpret arbitrary formal power series $f \in \powseries{\K}{t, x}$ as ``functions'' $\K \to \powseries{\K}{x}$, $t \mapsto f(t, x)$ and ``evaluate'' them at non-zero $\tau \in \K$, a simple way to sidestep this issue is by requiring these ``functions'' to be polynomial in $t$, i.e., come from the subalgebra $\powseries{\K}{x}[t] \subset \powseries{\K}{t, x}$. Such polynomials can be regarded as honest-to-goodness functions $\K \to \powseries{\K}{x}$, but this turns out to be too restrictive---for one, non-zero formal vector fields \emph{never} generate $\K$-algebra automorphisms of $\powseries{\K}{x}[t]$ (see Remark \ref{rmk:locally-nilpotent}). An intermediate approach is to restrict attention to those formal power series such that, \emph{for each fixed $\alpha \in \N^n$}, the coefficient of the monomial $x^\alpha$ depends polynomially on $t$ (but the degrees of the polynomials may be unbounded as $\alpha$ varies). This is essentially what we shall do, with some additional conditions for formal isotopies:

\begin{definition}\label{def:evaluative}
    We define \textbf{evaluative} formal tensor fields and formal isotopies as follows. A time-dependent formal function $f(t, x) \in \powseries{\K}{t, x}$ is called evaluative if $f(t, x) \in \powseries{\K[t]}{x}$:
    \begin{align*}
        f(t, x) = \sum_{\alpha \in \N^n} p_\alpha(t)x^\alpha, \text{ where each $p_\alpha(t)$ is a polynomial in $t$ (as opposed to a power series).}
    \end{align*}
    A time-dependent formal tensor field $A_t \in \tensor[r, s]_t$ is called evaluative if all of its component power series are evaluative. A formal isotopy $\varphi_t \colon \powseries{\K}{t, x} \to \powseries{\K}{t, x}$, identified with an $n$-tuple $\varphi(t, x) = \big(\varphi^1(t, x), \ldots, \varphi^n(t, x)\big) \in \powseries{\K}{t, x}^n$ via \eqref{eq:isotopy-characterization}, is called evaluative if the following two conditions hold:
    \begin{enumerate}[label=(\roman*)]
        \item\label{itm:evaluative-isotopy-hom} All $n$ component power series $\varphi^i(t, x)$ are evaluative and also satisfy $\varphi^i(t, 0) = 0$.
        \item\label{itm:evaluative-isotopy-aut} The matrix $D_x\varphi(t, x)$ is invertible in $\Mat_n(\powseries{\K[t]}{x})$. Equivalently, $D_x\varphi(t, 0)$ is invertible in $\Mat_n(\K[t])$, or $\det D_x\varphi(t, 0)$ is a non-zero constant polynomial.
    \end{enumerate}
\end{definition}

\begin{example}\label{ex:quadratic-vector-field}
    For any polynomial $p(t) \in \K[t]$, the time-dependent formal vector field in dimension $n = 1$ given by $X_t = p(t)x^2\coordvf{x}$ is evaluative. Defining $P(t) := \int_0^t p(s)\,ds \in \K[t]$, the flow of $X_t$ corresponds to the evaluative power series (which is not in $\powseries{\K}{x}[t]$ unless $p = 0$)
    \begin{align*}
        \varphi(t, x) := x(1 - P(t)x)^{-1} = \sum_{k = 0}^\infty P(t)^kx^{k + 1}.
    \end{align*}
    Since $\varphi(t, 0) = 0$ and $D_x\varphi(t, 0) = 1$, the flow of $X_t$ is an evaluative formal isotopy.
\end{example}

For any $\tau \in \K$, the map $\K[t] \to \K$ given by evaluation of polynomials at $t = \tau$ extends coefficient-wise to a $\powseries{\K}{x}$-algebra homomorphism
\begin{align*}
    \eval_\tau \colon \powseries{\K[t]}{x} \to \powseries{\K}{x}, \qquad f(t, x) \mapsto f(\tau, x).
\end{align*}
Given an evaluative formal tensor field $A_t$, we denote the time-independent formal tensor field obtained by evaluating each of its components at $t = \tau$ by $A_\tau$ (or $A_t|_{t = \tau}$ if confusion is possible). Given an evaluative formal isotopy $\varphi_t$ with corresponding $n$-tuple $\varphi(t, x) = \big(\varphi^1(t, x), \ldots, \varphi^n(t, x)\big) \in \powseries{\K}{t, x}^n$, condition \ref{itm:evaluative-isotopy-hom} implies that $\varphi(\tau, x) := \big(\varphi^1(\tau, x), \ldots, \varphi^n(\tau, x)\big) \in \powseries{\K}{x}^n$ is well-defined and corresponds to a unique $\K$-algebra endomorphism of $\powseries{\K}{x}$ via \eqref{eq:K[[x]]-univ-prop}, denoted by $\varphi_\tau$ or $\varphi_t|_{t = \tau}$. Condition \ref{itm:evaluative-isotopy-aut} guarantees that $\varphi_\tau$ is a formal diffeomorphism. In particular, $A_t|_{t = 0}$ and $\varphi_t|_{t = 0}$ respectively agree with the time zero slices $A_0$ and $\varphi_0$ as defined in Section \ref{subsec:time-dependence}.

\begin{remark}
    One might consider weakening condition \ref{itm:evaluative-isotopy-aut} by only requiring $\det D_x\varphi(\tau, 0)$ to be a non-zero element of $\K$ for each fixed $\tau \in \K$, as this still implies that $\varphi_\tau$ is a formal diffeomorphism when combined with condition \ref{itm:evaluative-isotopy-hom}. However, even if the polynomial function $\tau \mapsto \det D_x\varphi(\tau, 0)$ is nowhere-vanishing on $\K$, it can have zeroes in a field extension of $\K$, and this can lead to undesirable phenomena. For example, the formal isotopy in dimension $n = 1$ given by $\varphi_t(x) = x(t^2 + 1)$ satisfies condition \ref{itm:evaluative-isotopy-hom} and this weaker version of condition \ref{itm:evaluative-isotopy-aut} over $\R$, but not over $\C$; its inverse does not satisfy condition \ref{itm:evaluative-isotopy-hom} as $\varphi_t^{-1}(x) = x(t^2 + 1)^{-1} = x\sum_{k = 0}^\infty (-1)^kt^{2k}$ is not evaluative.

    For completeness, we also note that there is no benefit to relaxing condition \ref{itm:evaluative-isotopy-hom} by only requiring that $\varphi^i(\tau, 0) = 0$ for each fixed $\tau \in \K$: since $\K$ is an infinite field, this is equivalent to $\varphi^i(t, 0)$ being the zero polynomial.
\end{remark}

A formal isotopy is evaluative if and only if it is the extension of a $\K$-algebra automorphism of $\powseries{\K[t]}{x}$ that fixes $t$, i.e., a $\K[t]$-algebra automorphism of $\powseries{\K[t]}{x}$. A time-dependent formal vector field is evaluative if and only if it is the extension of a derivation of $\powseries{\K[t]}{x}$ that vanishes on $t$, i.e., a $\K[t]$-linear derivation of $\powseries{\K[t]}{x}$. (See Remark \ref{rmk:subspace-vs-adic-topology} below for more details.) In theory, we could repeat the developments of Sections \ref{subsec:formal-objects} and \ref{subsec:time-dependence} with $\powseries{\K[t]}{x}$ in mind, \textit{mutatis mutandis}. We list, without proof, some results about evaluative objects obtained by doing so:

\begin{proposition}\label{prop:evaluative-properties}
    Fix $A_t \in \tensor[r, s]_t$, a formal isotopy $\varphi_t$, and $\tau \in \K$.
    \begin{enumerate}[label=(\roman*), ref=\thetheorem{}(\roman*)]
        \item If $\varphi_t$ is evaluative, then so is $\varphi_t^{-1}$. If $\psi_t$ is also an evaluative formal isotopy, then so is $\psi_t \circ \varphi_t$. Evaluation at $t = \tau$ commutes with inversion and composition:
        \begin{align*}
            (\varphi_t^{-1})|_{t = \tau} = (\varphi_\tau)^{-1} \quad\text{and}\quad(\psi_t \circ \varphi_t)|_{t = \tau} = \psi_\tau \circ \varphi_\tau.
        \end{align*}
        \item\label{itm:evaluative-pullback} If $\varphi_t$ and $A_t$ are both evaluative, then so is $\varphi_t^*A_t$. Evaluation commutes with pullback:
        \begin{align*}
            (\varphi_t^*A_t)|_{t = \tau} = (\varphi_\tau)^*(A_\tau).
        \end{align*}
        \item Tensor products, contractions, Lie derivatives along evaluative formal vector fields, and time derivatives of evaluative formal tensor fields are evaluative. Evaluation commutes with the first three operations (but not with $\f{d}{dt}$).
    \end{enumerate}
\end{proposition}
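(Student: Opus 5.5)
The plan is to prove everything by working systematically with the subalgebra $R := \powseries{\K[t]}{x} \subset \powseries{\K}{t, x}$ and the family of $\powseries{\K}{x}$-algebra homomorphisms $\eval_\tau \colon R \to \powseries{\K}{x}$. The first step is to record the basic closure facts for $R$. It is a subalgebra: the coefficient of $x^\alpha$ in a product is a finite sum of products of polynomials. It is closed under $\coordvf{x^i}$ and $\coordvf{t}$. Most importantly, it is closed under substitution $f \mapsto f\big(t, \varphi(t,x)\big)$ whenever the components $\varphi^i \in R$ satisfy $\varphi^i(t,0) = 0$. Indeed, each $\varphi^i$ then lies in the ideal generated by $x^1, \ldots, x^n$ of $R$, so only finitely many monomials $\varphi^\beta$ contribute to any fixed coefficient of $x^\alpha$, and each contributes a polynomial in $t$. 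The same finiteness shows that $\eval_\tau$ commutes with such substitutions: $\eval_\tau\big(f(t,\varphi(t,x))\big) = f(\tau, \varphi(\tau, x))$, since for each $\alpha$ one compares finitely many polynomial identities. I would also check that a matrix $M \in \Mat_n(R)$ is invertible in $\Mat_n(R)$ exactly when $\det M(t,0)$ is a nonzero constant. This follows because an element of $R$ with unit constant term in $\K[t]$ is invertible in $R$, by the geometric series in the ideal generated by $x$, with finitely many terms per coefficient. Finally, $\eval_\tau$ commutes with matrix inversion.

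For (i), composition is handled by the substitution fact. The components of $\psi_t \circ \varphi_t$ are $\varphi^i\big(t, \psi(t,x)\big)$, given the contravariant convention of Remark \ref{rmk:pullback-notation}. By the chain rule, $D_x$ of the composite at $x = 0$ is a product of two matrices, each invertible over $\K[t]$. Evaluating componentwise then gives $(\psi_t\circ\varphi_t)|_{t=\tau} = \psi_\tau\circ\varphi_\tau$.

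The inverse is the main obstacle, since $\varphi_t^{-1}$ is only known to exist in $\powseries{\K}{t,x}$. My first attempt is to construct the inverse directly in $R$ by the formal inverse function theorem over the ring $\K[t]$. Write $\varphi(t,x) = L(t)x + (\text{order} \geq 2 \text{ in } x)$ with $L(t) = D_x\varphi(t,0) \in \mathrm{GL}_n(\K[t])$. Then solve $\varphi(t, \chi(t,x)) = x$ degree by degree in $x$. At each degree the new coefficients of $\chi$ are $L(t)^{-1}$ times polynomial expressions in lower-degree coefficients, hence polynomial in $t$. The resulting $\chi$ is a right inverse in $R$. By uniqueness of inverses in $\powseries{\K}{t,x}$ it equals $\varphi_t^{-1}$, so the inverse is evaluative. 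Commutation with $\eval_\tau$ then follows from $\varphi_\tau\circ(\varphi_t^{-1})_\tau = (\varphi_t\circ\varphi_t^{-1})_\tau = \id$.

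For (ii) and (iii), I will use the coordinate formulas from Section \ref{subsec:formal-objects}. Pullbacks of vector fields are $D_x\varphi(t,x)^{-1}\big[X(t,\varphi(t,x))\big]$, and pullbacks of covector fields are $D_x\varphi^\transpose[\omega(t,\varphi(t,x))]$. The general tensor case is obtained by tensoring these: each component of $\varphi_t^*A_t$ is a finite sum of products of entries of $D_x\varphi$, entries of $(D_x\varphi)^{-1}$, and substituted components of $A_t$. All of these lie in $R$ by the closure facts above. Each operation also commutes with $\eval_\tau$, which gives $(\varphi_t^*A_t)|_{t=\tau} = \varphi_\tau^*A_\tau$. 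For (iii), tensor products and contractions are finite sums of products of components. The Lie derivative $\lie_{X_t}A_t$ has the standard coordinate formula, a finite sum of products of components and their $x$-partial derivatives. The time derivative applies $\coordvf{t}$ componentwise. All of these preserve $R$. The first three commute with $\eval_\tau$, being built from operations that act coefficientwise in $x$, whereas $\coordvf{t}$ visibly does not.
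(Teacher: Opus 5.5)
Your proposal is correct and follows the route the paper indicates. The paper lists this proposition without proof, saying only that one repeats Sections \ref{subsec:formal-objects} and \ref{subsec:time-dependence} over $\powseries{\K[t]}{x}$, with evaluative isotopies being the extensions of $\K[t]$-algebra automorphisms of that ring (cf.\ Remark \ref{rmk:subspace-vs-adic-topology}); your substitution-closure lemma, Jacobian criterion over $\K[t]$, and degree-by-degree inverse are exactly what that ``mutatis mutandis'' hides, and the one check you leave implicit, that your inverse $\chi$ satisfies condition \ref{itm:evaluative-isotopy-aut}, is immediate from $D_x\chi(t,0) = L(t)^{-1}$.
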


\begin{remark}\label{rmk:subspace-vs-adic-topology}
    Let $I$ be the ideal of $\powseries{\K[t]}{x}$ generated by the $x^i$'s, and let $J$ be the ideal of $\powseries{\K[t]}{x}$ generated by $t$ and the $x^i$'s.\footnote{Maximal ideals of $\powseries{\K[t]}{x}$ are precisely those generated by an irreducible polynomial $p(t) \in \K[t]$ and the $x^i$'s, so $J$ is maximal while $I$ is not.} The $I$-adic topology on $\powseries{\K[t]}{x}$ is the typical choice for topologizing a power series algebra, while the $J$-adic topology coincides with the subspace topology inherited from the $\maxideal_{t, x}$-adic topology on $\powseries{\K}{t, x}$. The $I$-adic topology is strictly finer than the $J$-adic topology. For example, the sequence $(t^k)$ does not converge in the $I$-adic topology, but it converges to $0$ in the $J$-adic topology. As a consequence, $\eval_\tau \colon \powseries{\K[t]}{x} \to \powseries{\K}{x}$ is continuous when $\powseries{\K[t]}{x}$ is equipped with the $I$-adic topology, but is discontinuous for $\tau \neq 0$ when $\powseries{\K[t]}{x}$ is equipped with the $J$-adic topology (as the sequence $(\tau^k)$ does not converge in the $\maxideal$-adic topology of $\powseries{\K}{x}$).

    For evaluative objects, the difference turns out to not matter much. For example, every $\K[t]$-linear derivation $X$ of $\powseries{\K[t]}{x}$ is automatically continuous with respect to the $I$-adic topology, from which it follows that such derivations form a free $\powseries{\K[t]}{x}$-module with basis $\left\{\coordvf{x^i}\right\}$---the proof is exactly the same as for derivations of $\powseries{\K}{x}$ \cite[IV\S4.6]{bourbaki_algebra_2003}. Expanding $X$ in this basis extends $X$ to a derivation of $\powseries{\K}{t, x}$. This extension is continuous in the $\maxideal_{t, x}$-adic topology, so $X$ is also continuous with respect to the subspace (i.e., $J$-adic) topology.

    Similarly, every $\K[t]$-algebra endomorphism $\varphi$ of $\powseries{\K[t]}{x}$ is automatically continuous with respect to both the $I$-adic and $J$-adic topologies, but we cannot justify this using Remark \ref{rmk:adic-continuity} as $\K[t]$ is not a field. We use a degree argument instead: for each $i$, $1 - tx^i$ is invertible in $\powseries{\K[t]}{x}$ as its constant term (with respect to the $x$'s) is invertible in $\K[t]$. Thus $\varphi(1 - tx^i) = 1 - t\varphi(x^i)$ is also invertible in $\powseries{\K[t]}{x}$. If $p^i(t) \in \K[t]$ denotes the constant term of $\varphi(x^i)$, then the constant term of $1 - t\varphi(x^i)$ is $1 - tp^i(t)$, which is therefore invertible in $\K[t]$. By degree considerations, $p^i(t) = 0$, hence $\varphi(x^i) \in I$, hence $\varphi(I) \subseteq I$. As $\varphi(t) = t$, we also have $\varphi(J) \subseteq J$, so $\varphi$ is both $I$-adically and $J$-adically continuous. See \cite{kim_r-automorphisms_1981} for what can happen if one relaxes $\K[t]$-linearity to $\K$-linearity.
\end{remark}

By identifying a time-independent (hence evaluative) formal tensor field with all of its $t = \tau$ time slices where $\tau \in \K$ (we did so in Section \ref{subsec:time-dependence} for $\tau = 0$ only), we arrive at a formal version of Moser's trick:

\begin{corollary}[Formal Moser trick]\label{cor:formal-moser}
    Let $A_t \in \tensor[r, s]_t$ be an evaluative formal tensor field. If there exists $X_t \in \tdvf$ such that the flow $\varphi_t \colon \powseries{\K}{t, x} \to \powseries{\K}{t, x}$ of $X_t$ is evaluative and
    \begin{align*}
        \f{dA_t}{dt} + \lie_{X_t} A_t = 0,
    \end{align*}
    then for all $\tau \in \K$, the formal tensor fields $A_0, A_\tau \in \tensor[r, s]$ and the formal diffeomorphism $\varphi_\tau \colon \powseries{\K}{x} \to \powseries{\K}{x}$ are related by $\varphi_\tau^*A_\tau = A_0$.
\end{corollary}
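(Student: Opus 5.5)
The plan is to apply Proposition \ref{prop:tensor-flow-property} to $\varphi_t^*A_t$, then pass from $t=0$ to $t=\tau$ by evaluating. Since $\varphi_t$ is the flow of $X_t$, it satisfies the flow property \ref{itm:flow-property}, so
\begin{align*}
    \f{d}{dt}(\varphi_t^*A_t) = \varphi_t^*\left(\f{dA_t}{dt} + \lie_{X_t}A_t\right) = \varphi_t^*(0) = 0.
\end{align*}
By the split short exact sequence in Section \ref{subsec:time-dependence}, a time-dependent formal tensor field with vanishing time derivative is time-independent. Hence $\varphi_t^*A_t$ equals (the image under $\imath\otimes\id$ of) its $t = 0$ time slice. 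By Equation \eqref{eq:time-0-eval} and the initial condition \ref{itm:initial-condition}, that slice is $\varphi_0^*A_0 = A_0$. So we obtain the identity $\varphi_t^*A_t = A_0$ in $\tensor[r, s]_t$.

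Next we evaluate at $t = \tau$. Because $\varphi_t$ and $A_t$ are both evaluative, Proposition \ref{itm:evaluative-pullback} says that $\varphi_t^*A_t$ is evaluative and $(\varphi_t^*A_t)|_{t=\tau} = \varphi_\tau^*A_\tau$. On the other side, $A_0$ viewed as a time-independent field has components in $\powseries{\K}{x} \subset \powseries{\K[t]}{x}$, and $\eval_\tau$ fixes these. Therefore its $t = \tau$ evaluation is $A_0$ itself. This matches the convention, stated just before the corollary, of identifying a time-independent field with all of its time slices. Comparing the two evaluations gives $\varphi_\tau^*A_\tau = A_0$.

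The only delicate step is the last one: it must be legitimate to evaluate both sides of an equality in $\tensor[r,s]_t$ at $t=\tau$. This rests on $\eval_\tau$ being a well-defined map on evaluative objects applied componentwise, so equal evaluative fields have equal evaluations. The real content is the commutation of evaluation with pullback, which is supplied by Proposition \ref{itm:evaluative-pullback} and can be taken as given. Once that is granted, the argument is a direct chain of the cited results, and the subtlety about the $J$-adic discontinuity of $\eval_\tau$ (Remark \ref{rmk:subspace-vs-adic-topology}) never enters.
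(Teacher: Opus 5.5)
Your proposal is correct and follows the same route as the paper's proof: you use Proposition~\ref{prop:tensor-flow-property} to get $\frac{d}{dt}(\varphi_t^*A_t) = 0$, then Proposition~\ref{itm:evaluative-pullback} to compare the evaluations at $t=\tau$ and $t=0$. The only difference is that you spell out steps the paper compresses into one line, namely time-independence via the split exact sequence, and the identification of the $t=0$ slice as $\varphi_0^*A_0 = A_0$ via Equation~\eqref{eq:time-0-eval} and \ref{itm:initial-condition}.
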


\begin{proof}
    We have $\f{d}{dt}(\varphi_t^*A_t) = 0$ by Proposition \ref{prop:tensor-flow-property}, hence $\varphi_\tau^*A_\tau = (\varphi_t^*A_t)|_{t = \tau} = (\varphi_t^*A_t)|_{t = 0} = A_0$ by Proposition \ref{itm:evaluative-pullback}.
\end{proof}

\begin{remark}\label{rmk:formal-cartan}
    Differential forms and multivector fields can also be imported into the formal setting by considering exterior powers of the $\powseries{\K}{x}$-modules $\Omega^1$ and $\vf$. Naturally, one can then proceed to develop the full apparatus of a formal Cartan calculus (exterior differentials, interior products, Lie derivatives, Schouten brackets, etc.), as well as time-dependent and evaluative versions, thus obtaining a formal Moser trick for these as well.
\end{remark}

In general, if the flow of $X_t \in \tdvf$ is evaluative, then $X_t$ is evaluative by Remark \ref{rmk:flow-to-vf} and Proposition \ref{prop:evaluative-properties}. However, an evaluative (or even time-independent) formal vector field need not have an evaluative flow, as demonstrated by Examples \ref{ex:d/dx-non-evaluative} and \ref{ex:euler-non-evaluative}. To employ the formal Moser trick, we thus seek conditions guaranteeing the flow of $X_t$ to be evaluative. Combining the next two results provides a partial answer.

\begin{proposition}\label{prop:evaluative-isotopy-condition}
    Let $\varphi_t$ be a formal isotopy of $\K^n$ with corresponding $n$-tuple written in the form $\varphi(t, x) = \sum_{k = 0}^\infty t^k\varphi_k(x)$ where $\varphi_k(x) \in \powseries{\K}{x}^n$ (c.f. Remark \ref{rmk:isotopy-tuple-equivalence}). If $\Theta(\varphi_k) \geq 2$ for all $k \geq 1$ and $\lim_{k \to \infty} \Theta(\varphi_k) = \infty$, then $\varphi_t$ is evaluative.
\end{proposition}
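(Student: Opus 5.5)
We verify the two conditions of Definition \ref{def:evaluative} directly from the expansion $\varphi(t, x) = \sum_{k} t^k\varphi_k(x)$.

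\textbf{Condition (i): evaluativity of the components.} Fix a component $i$ and a multi-index $\alpha \in \N^n$. The coefficient of $x^\alpha$ in $\varphi^i(t, x)$ is $\sum_k c^i_{k,\alpha} t^k$, where $c^i_{k,\alpha}$ is the coefficient of $x^\alpha$ in $\varphi_k^i(x)$. Since $\Theta(\varphi_k) \to \infty$, there is some $K$ with $\Theta(\varphi_k) > |\alpha|$ for all $k > K$. Then $c^i_{k,\alpha} = 0$ for $k > K$, so this coefficient is a polynomial in $t$ and $\varphi^i \in \powseries{\K[t]}{x}$.

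The constant term $\varphi^i(t, 0)$ is $\sum_k t^k \varphi^i_k(0)$. We have $\varphi_0(0) = 0$ by Remark \ref{rmk:isotopy-tuple-equivalence}, since $\Theta(\varphi_0) \geq 1$. For $k \geq 1$ the hypothesis $\Theta(\varphi_k) \geq 2$ gives $\varphi_k(0) = 0$. Hence $\varphi^i(t, 0) = 0$.

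\textbf{Condition (ii): invertibility of the Jacobian.} It suffices to show that $\det D_x\varphi(t, 0)$ is a non-zero constant polynomial. Differentiating term by term, which is justified by continuity of $\partial/\partial x^j$ on $\powseries{\K}{t,x}$, gives $D_x\varphi(t, 0) = \sum_k t^k D\varphi_k(0)$. For $k \geq 1$, $\Theta(\varphi_k) \geq 2$ means $\varphi_k$ has no linear terms, so $D\varphi_k(0) = 0$. Therefore $D_x\varphi(t, 0) = D\varphi_0(0)$, a constant matrix, and it is invertible in $\Mat_n(\K)$ by Remark \ref{rmk:isotopy-tuple-equivalence}. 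Its determinant is thus a non-zero constant.

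We also use the stated equivalence in Definition \ref{def:evaluative}(ii), that invertibility of $D_x\varphi(t,0)$ in $\Mat_n(\K[t])$ implies invertibility of $D_x\varphi(t, x)$ in $\Mat_n(\powseries{\K[t]}{x})$. This holds because an element of $\powseries{\K[t]}{x}$ whose constant term is a unit of $\K[t]$ is itself a unit.

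The only step needing care is the first one: the hypothesis $\Theta(\varphi_k) \to \infty$ must be translated into finiteness of each coefficient's $t$-degree. The remaining steps are direct bookkeeping.
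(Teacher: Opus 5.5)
Your proof is correct and is exactly the trivial-weighting specialization of the paper's argument: the paper never proves this proposition separately, but deduces it from Proposition \ref{prop:evaluative-isotopy-weighted-condition}, whose proof uses the same three steps ($t$-polynomiality of each coefficient from $\Theta(\varphi_k) \to \infty$, vanishing constant terms, and $\det D_x\varphi(t, 0) = \det D\varphi_0(0)$). With a single weight, that proof's block upper triangular structure collapses to your observation that $D\varphi_k(0) = 0$ for $k \geq 1$.
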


\begin{remark}[a partial converse]\label{rmk:evaluative-isotopy-converse-counterexample}
    If an $n$-tuple $\varphi(t, x) = \sum_k t^k\varphi_k(x)$ corresponds to an evaluative formal isotopy, then $\Theta(\varphi_k) \geq 1$ for all $k \geq 1$ and $\lim_{k \to \infty} \Theta(\varphi_k) = \infty$. It is not necessarily true that $\Theta(\varphi_k) \geq 2$ for $k \geq 1$, as demonstrated by the evaluative formal isotopy of $\K^2$ corresponding to the $2$-tuple $(x + ty, y)$ (the flow of $y\coordvf{x}$).
\end{remark}

\begin{proposition}\label{prop:evaluative-flow}
    Suppose $X_t \in \tdvf$ is written in the form $X_t = \sum_{k = 0}^\infty t^kX_k$ where $X_k \in \vf$. Identify the flow $\varphi_t$ of $X_t$ with an $n$-tuple $\varphi(t, x) = \sum_{k = 0}^\infty t^k\varphi_k(x)$ where $\varphi_k(x) \in \powseries{\K}{x}^n$. If $\Theta\big(X_k(x^i)\big) \geq k + 2$ for all $k \geq 0$ and $i \in \{1, \ldots, n\}$, then $\Theta(\varphi_k) \geq k + 1$ for all $k \geq 0$. In particular, $\varphi_t$ is an evaluative formal isotopy by Proposition \ref{prop:evaluative-isotopy-condition}.
\end{proposition}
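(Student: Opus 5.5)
We extract a recursion for the coefficients $\varphi_k$ from the flow property and then argue by induction on $k$. Writing $\varphi^i(t,x) = \varphi_t(x^i) = \sum_k t^k\varphi_k^i(x)$, the flow property \ref{itm:flow-property} applied to $f = x^i$ reads
\begin{align*}
    \sum_{k=0}^\infty (k+1)t^k\varphi^i_{k+1}(x) = \varphi_t\big(X_t(x^i)\big) = \sum_{j=0}^\infty t^j\,\varphi_t\big(X_j(x^i)\big).
\end{align*}
Since $\varphi_t$ is continuous and fixes $t$, we can write $\varphi_t\big(X_j(x^i)\big) = X_j^i\big(\varphi(t,x)\big)$, meaning the power series $X_j^i := X_j(x^i)$ with each $x^l$ replaced by $\varphi^l(t,x)$. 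By \ref{itm:initial-condition} we have $\varphi_0(x) = x$, which settles the base case $\Theta(\varphi_0) = 1 \geq 1$.

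For the inductive step, suppose $\Theta(\varphi_m) \geq m+1$ for all $m \leq k$. Comparing coefficients of $t^k$ gives
\begin{align*}
    (k+1)\varphi^i_{k+1} = \sum_{j=0}^{k} \big[t^{k-j}\big]\, X^i_j\big(\varphi(t,x)\big),
\end{align*}
where $[t^m]$ denotes the coefficient of $t^m$. This coefficient involves only $\varphi_0, \ldots, \varphi_{k-j}$, and $k-j \leq k$, so the inductive hypothesis applies to every term.

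The key estimate is a weighted degree count. Assign $x$ weight $1$ and $t$ weight $-1$, so that $t^m\varphi_m$ has ``$x$-order minus $t$-degree'' at least $1$ for every $m$. Then for any monomial $x^\alpha$ with $|\alpha| = d$, each term in the expansion of $\varphi(t,x)^\alpha$ has the form $t^{m_1+\cdots+m_d}\prod_l \varphi_{m_l}^{i_l}$. Its $x$-order is at least $\sum_l(m_l+1) = (\text{total } t\text{-power}) + d$. Consequently
\begin{align*}
    \Theta\Big([t^{m}]\,X^i_j\big(\varphi(t,x)\big)\Big) \geq m + \Theta(X^i_j) \geq m + j + 2.
\end{align*}
With $m = k-j$ this gives $\Theta\geq k+2$ for each summand. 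Since $k+1$ is invertible in characteristic zero, we conclude $\Theta(\varphi_{k+1}) \geq k+2$, completing the induction.

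The main point needing care is this multinomial bookkeeping. Specifically, one must justify that substituting into an infinite power series and extracting a fixed $t$-coefficient is legitimate; this follows from $\maxideal_{t,x}$-adic continuity, since each $\varphi^l \in \maxideal_{t,x}$. One must also check that the order bound passes to infinite sums, which holds because $\Theta$ of a convergent sum is at least the minimum of the orders of its terms.

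Finally, to obtain evaluativity we verify the hypotheses of Proposition \ref{prop:evaluative-isotopy-condition}. For $k\ge1$ we have $\Theta(\varphi_k)\ge k+1\ge 2$, and $\Theta(\varphi_k)\to\infty$. Both hypotheses hold, so $\varphi_t$ is evaluative.
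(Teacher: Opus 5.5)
Your proposal is correct and follows essentially the same route as the paper: strong induction on $k$, comparing $t^k$-coefficients in the flow property applied to $x^i$, and bounding the order of each product $\prod_l \varphi^{i_l}_{m_l}$ from below by (total $t$-power) $+\,|\alpha| \geq k+2$. The paper writes this argument out only for $n=1$ and handles general $n$ through the weighted Proposition \ref{prop:evaluative-flow-weighted-condition}, whose bookkeeping is the same as your direct general-$n$ argument.
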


Notice that the Euler vector field $x\coordvf{x}$ from Example \ref{ex:euler-non-evaluative} just barely fails the hypothesis for $k = 0$, but the conclusion drastically fails: the flow \eqref{eq:euler-flow} has $\Theta(\varphi_k) = 1$ for all $k$.

Section \ref{subsec:weights-eval} will be dedicated to proving weighted generalizations of Propositions \ref{prop:evaluative-isotopy-condition} and \ref{prop:evaluative-flow}. As such, we will not prove the former at all, and we will only prove the latter in dimension $n = 1$ to convey the main ideas of the more general proof; as we shall see, the argument for $n > 1$ is analogous but requires significantly more complicated notation.

\begin{proof}[Proof in dimension $n = 1$]
    We proceed by (strong) induction on $k$. By Theorem \ref{thm:flow-existence-uniqueness}\ref{itm:initial-condition} we have $\varphi_0(x) = x$, hence $\Theta(\varphi_0) = 1$, establishing the base case. For the inductive step, assume that $\Theta(\varphi_k) \geq k + 1$ for all $k \leq m$. Theorem \ref{thm:flow-existence-uniqueness}\ref{itm:flow-property} with $f(x) = x$ reads
    \begin{align*}
        \coordvf{t}\big(\varphi_t(x)\big) = \varphi_t\big(X_t(x)\big).
    \end{align*}
    The coefficient of $t^m$ on the left-hand side is $(m + 1)\varphi_{m + 1}(x)$; thus it suffices to determine the corresponding coefficient of $t^m$ on the right-hand side and show that its order is at least $m + 2$. As $\Theta(X_k(x)) \geq k + 2$ for all $k$, we can write $X_k(x) = \sum_{\alpha = k + 2}^\infty c_{k, \alpha}x^\alpha$ for some coefficients $c_{k, \alpha} \in \K$. As $\varphi_t$ is a $\K$-algebra automorphism fixing $t$, we thus have
    \begin{align*}
        \varphi_t\big(X_t(x)\big) = \sum_{k = 0}^\infty \sum_{\alpha = k + 2}^\infty c_{k,\alpha}t^k\varphi_t(x)^\alpha.
    \end{align*}
    The term $\varphi_t(x)^\alpha$ can be expressed as a sum over multi-indices of length $\alpha$:
    \begin{align*}
        \varphi_t(x)^\alpha = \Bigg(\sum_{\ell = 0}^\infty t^\ell\varphi_\ell(x)\Bigg)^\alpha = \sum_{\beta \in \N^\alpha} t^{|\beta|}\prod_{\ell = 1}^\alpha \varphi_{\beta_\ell}(x),
    \end{align*}
    from which it follows that the coefficient of $t^m$ in $\varphi_t(X_t(x))$ is
    \begin{align*}
        \sum_{k = 0}^\infty \sum_{\alpha = k + 2}^\infty \sum_{\substack{\beta \in \N^\alpha \\ |\beta| + k = m}} \left(c_{k, \alpha} \prod_{\ell = 1}^\alpha \varphi_{\beta_\ell}(x)\right).
    \end{align*}
    Focusing on the bracketed expression for each fixed $k$, $\alpha$, and $\beta$, observe that $\beta_\ell \leq |\beta| \leq m$, hence $\Theta(\varphi_{\beta_\ell}) \geq \beta_\ell + 1$ by the induction hypothesis. Thus
    \begin{align*}
        \Theta\left(\prod_{\ell = 1}^\alpha \varphi_{\beta_\ell}\right) = \sum_{\ell = 1}^\alpha \Theta(\varphi_{\beta_\ell}) \geq |\beta| + \alpha \geq |\beta| + k + 2 = m + 2.
    \end{align*}
    As the coefficient of $t^m$ in $\varphi_t\big(X_t(x)\big)$ is a sum of terms which all have order $\geq m + 2$, it follows that $\Theta(\varphi_{m + 1}) \geq m + 2$, completing the induction.
\end{proof}

We conclude this section by drawing some comparisons between vector fields whose flows are evaluative and two well-known types of derivations that generate automorphisms of algebras similar to $\powseries{\K[t]}{x}$: one obtained by swapping which variables are allowed to appear with unbounded degrees ($\powseries{\K}{t}[x]$), and one obtained by swapping the order in which the variables are adjoined ($\powseries{\K}{x}[t]$).

\begin{remark}\label{rmk:poly-flow}
    A time-independent $C^1$ vector field $X$ on $\R^n$ is called \emph{poly-flow} if its time $t$ flow $\varphi_t$ is a polynomial mapping (i.e., the $n$ components of $\varphi_t$ are all polynomials in $x$) wherever defined. Bass--Meisters \cite[Theorem 4.1]{bass_polynomial_1985} proved that this condition implies the following: $X$ is complete and its component functions are polynomials; the degrees of the components of $\varphi_t$ are bounded above as $t \in \R$ varies; the coefficients of these polynomials are analytic functions of $t$. Thus, the flow of a poly-flow vector field corresponds to an automorphism of $\powseries{\R}{t}[x]$. Bass--Meisters also show that $\det D_x\varphi(t, x)$ depends only on $t$, not $x$, hence is equal to $\det D_x\varphi(t, 0)$. This is somewhat orthogonal to our definition of an evaluative formal isotopy, in which we allow the Jacobian determinant to depend on $x$, but not on $t$ when $x = 0$.

    If $\K$ is any field of characteristic zero and $f \colon \K^n \to \K^n$ is a bijective function such that $f$ and $f^{-1}$ are both polynomial mappings, then the chain rule implies that $\det Df(x)$ is an invertible element of $\K[x]$, hence a non-zero constant. The famous \emph{Jacobian conjecture} asks: if $f$ is a polynomial mapping and $\det Df(x)$ is a non-zero constant, must $f$ be bijective with its inverse also a polynomial mapping? For $\K = \R$ and $\K = \C$, Meisters--Olech \cite{meisters_poly-flow_1987} have shown that the Jacobian conjecture holding for a given $f$ is equivalent to the assertion that, for each $a \in \K^n$, the flow $\varphi_t(x; a)$ of the time-independent vector field $X_a := Df(x)^{-1}a$ is polynomial in both $x$ \emph{and} $t$ (where $t$ is a complex time parameter if $\K = \C$). They construct $f^{-1}(a)$ for any given $a = (a^1, \ldots, a^n)$ in a fashion similar to evaluation: denoting the standard basis vectors of $\K^n$ by $e_i$, start at the origin $\alpha_0 = (0, \ldots, 0)$ and successively flow along each $X_{e_i}$ for time $a^i$ to obtain the sequence of points $\alpha_i := \varphi_{a^i}(\alpha_{i - 1}; e_i)$. Then $f^{-1}(a) = \alpha_n$, which is polynomial in $a$ if each flow is polynomial in space and time.

    We speculate that it might be possible to perform their construction over more general fields of characteristic zero via a formal Moser-type argument, with a caveat: the formal flow of $X_a$ being globally polynomial in $x$ and $t$ does not imply that it is evaluative. For example, the polynomial mapping $f(x, y) = (x + y^2, y)$ has polynomial inverse $f^{-1}(u, v) = (u - v^2, v)$; for this $f$, the flow of $X_{(a, b)}$ corresponds to the $2$-tuple $(x + (a - 2by)t - b^2t^2, y + bt)$, which satisfies condition \ref{itm:evaluative-isotopy-aut} of Definition \ref{def:evaluative} but not condition \ref{itm:evaluative-isotopy-hom} (unless $(a, b) = (0, 0)$).
\end{remark}

\begin{remark}\label{rmk:locally-nilpotent}
    A derivation $X$ of an algebra $A$ is called \emph{locally nilpotent} \cite{freudenburg_algebraic_2017} if, for each $a \in A$, there exists $k \in \N$ such that $X^{\circ k}(a) = 0$. Equivalently, extend $X$ to a derivation of $\powseries{A}{t}$ by defining $X(\sum_k a_kt^k) = \sum_k X(a_k)t^k$; then $X$ is locally nilpotent if and only if the algebra automorphism $e^{tX} \colon \powseries{A}{t} \to \powseries{A}{t}$ defined exactly as in Theorem \ref{thm:flow-existence-uniqueness} preserves $A[t]$. However, \emph{the only locally nilpotent derivation of $A = \powseries{\K}{x}$ is the zero derivation}.

    Indeed, suppose $\varphi_t$ is any $\K$-algebra endomorphism of $\powseries{\K}{t, x}$ that fixes $t$ and satisfies \ref{itm:initial-condition} (e.g., the flow of any formal vector field). If $\varphi_t$ preserves $\powseries{\K}{x}[t]$, then $\varphi_t(1 - x^i) = 1 - \varphi_t(x^i)$ is invertible in $\powseries{\K}{x}[t]$ as $1 - x^i$ is invertible in $\powseries{\K}{x}$. Thus $\varphi_t(x^i)$ is a constant polynomial with respect to $t$; in particular, it equals its $t = 0$ time slice, which is $x^i$ by \ref{itm:initial-condition}. Since $\varphi_t$ fixes $t$ and each $x^i$, it restricts to the identity map on the polynomial algebra $\K[t, x]$, which is a dense subset of $\powseries{\K}{t, x}$ in the (Hausdorff) $\maxideal_{t, x}$-adic topology. By continuity, $\varphi_t = \id_{\powseries{\K}{t, x}}$; by Remark \ref{rmk:flow-to-vf}, $\varphi_t$ is the flow of the zero vector field.
\end{remark}

\section{Weightings of \texorpdfstring{$\K^n$}{K\^{}n}}\label{sec:weightings}

\subsection{The algebra of weights}\label{subsec:weights}

We continue to use the shorthand notation $\powseries{\K}{x} := \powseries{\K}{x^1, \ldots, x^n}$ for the algebra of formal power series and $\K[x] := \K[x^1, \ldots, x^n]$ for the subalgebra of polynomials. A \emph{polynomial vector field} on $\K^n$ is a derivation of $\K[x]$. The set $\vf_\poly$ of polynomial vector fields is both a module over $\K[x]$ with a basis given by $\left\{\coordvf{x^1}, \ldots, \coordvf{x^n}\right\}$ and a Lie algebra over $\K$. For example, the poly-flow vector fields discussed in Remark \ref{rmk:poly-flow} are polynomial vector fields. We may naturally regard $\vf_\poly \subset \vf$ as a Lie subalgebra of the formal vector fields, and when $\K = \R$ or $\K = \C$, also as a Lie subalgebra of the smooth vector fields on $\K^n$.

\begin{definition}\label{def:weighting}
    A \textbf{weighting} of $\K^n$ is an $n$-tuple of integers $w = (w_1, w_2, \ldots, w_n)$ such that $1 \leq w_1 \leq w_2 \leq \dots \leq w_n$.
\end{definition}

We associate the $i$\textsuperscript{th} weight $w_i$ with both the formal variable $x^i \in \K[x] \subset \powseries{\K}{x}$ and the coordinate function $x^i \colon \K^n \to \K$. By extension, the \emph{weighted degree} of a monomial $x^\alpha = (x^1)^{\alpha_1} \cdots (x^n)^{\alpha_n}$ with respect to $w$ is $\pair{w, \alpha} := \sum_i w_i\alpha_i$. The ideal of $\powseries{\K}{x}$ generated by monomials $x^\alpha$ such that $\pair{w, \alpha} \geq k$ is denoted by $\mathcal{F}^k$. Note that $\mathcal{F}^k = \maxideal$ for $1 \leq k \leq w_1$, as $\mathcal{F}^1 \subseteq \maxideal \subseteq \mathcal{F}^{w_1} \subseteq \mathcal{F}^1$.

Requiring weights to be listed from smallest to largest is done out of convenience to simplify proofs, e.g., those of Proposition \ref{prop:evaluative-isotopy-weighted-condition} and Lemma \ref{lem:linear-approximation-commutative}. One can of course start with weights in a different order, permute the coordinates of $\K^n$ to sort them, then apply the upcoming results. However, the requirement that weights be strictly positive is not arbitrary---for one, it guarantees that the translation-invariant topology on $\powseries{\K}{x}$ determined by the decreasing filtration by ideals
\begin{align*}
    \powseries{\K}{x} = \mathcal{F}^0 \supseteq \mathcal{F}^1 \supseteq \mathcal{F}^2 \supseteq \cdots
\end{align*}
coincides with the $\maxideal$-adic topology. From this filtration, we obtain the associated grading
\begin{align*}
    \K[x] = \bigoplus_{k = 0}^\infty \mathcal{P}^k,
\end{align*}
where the $\K$-vector space $\mathcal{P}^k$ of \emph{quasi-homogeneous polynomials} of weighted degree $k$ has basis given by $\{x^\alpha \mid \pair{w, \alpha} = k\}$. The adjective ``unweighted'' shall always refer to the \emph{trivial weighting} $w = (1, \ldots, 1)$, from which we recover the $\maxideal$-adic filtration on $\powseries{\K}{x}$ ($\mathcal{F}^k = \maxideal^k$) and the usual grading on $\K[x]$ by polynomial homogeneity. Given $f(x) = \sum_\alpha c_\alpha x^\alpha \in \powseries{\K}{x}$, we define
\begin{align*}
    \Theta_w(f) := \max\{k \mid f \in \mathcal{F}^k\} = \min\{\pair{w, \alpha} \mid c_\alpha \neq 0\},
\end{align*}
with the convention that $\Theta_w(0) := \infty$. Like unweighted Big Theta, this satisfies $\Theta_w(fg) = \Theta_w(f) + \Theta_w(g)$ because $\K$ is an integral domain. For non-trivial weightings, we do \emph{not} extend this ``weighted Big Theta'' notation to $n$-tuples of power series, as different components may have different weights.

\begin{example}
    On $\K^2$ with coordinates $(x, y)$, we may consider $x$ ``quadratic'' and $y$ ``cubic'' by assigning weights $(w_1, w_2) = (2, 3)$. Then the filtration on $\powseries{\K}{x, y}$ starts with $\mathcal{F}^1 = \mathcal{F}^2 = \maxideal$, and
    \begin{align*}
        \mathcal{F}^3 & = \operatorname{span}_{\powseries{\K}{x, y}} \{x^2, y\}, & \mathcal{F}^4 & = \operatorname{span}_{\powseries{\K}{x, y}} \{x^2, xy, y^2\}, & \mathcal{F}^5 & = \operatorname{span}_{\powseries{\K}{x, y}} \{x^3, xy, y^2\}.
    \end{align*}
    For this weighting, there are no non-zero quasi-homogeneous polynomials of weighted degree $1$.
\end{example}

So far, there has been a lot of algebra and not much calculus, so let us compare with analogous definitions in the differentiable world. Recall that a function $f \colon \R^n \to \R$ is called positively homogeneous of degree $s \in \R$ if $f(tx) = t^sf(x)$ for all $x \in \R^n$ and $t > 0$. For a function $f$ that is smooth at $0 \in \R^n$ and positively homogeneous of degree $s$, one has the following dichotomy:
\begin{itemize}
    \item If $s < 0$, then $f$ is the zero function.
    \item If $s \geq 0$, then $s \in \N$ and $f$ is a homogeneous polynomial function of degree $s$.
\end{itemize}
Thus, smooth positively homogeneous functions can be studied purely algebraically. Introducing a weighting $w = (w_1, \ldots, w_n)$, let us call a function $f \colon \R^n \to \R$ \emph{positively quasi-homogeneous} of weighted degree $s \in \R$ if $\kappa_t^*f = t^sf$ for all $t > 0$, where $\kappa_t \colon \R^n \to \R^n$ is the linear map given by
\begin{align}\label{eq:weighted-action}
    \kappa_t(x^1, \ldots, x^n) = (t^{w_1}x^1, \ldots, t^{w_n}x^n).
\end{align}
Grabowski--Rotkiewicz \cite[Lemma 2.1]{grabowski_graded_2012} have shown that the dichotomy above continues to hold when one replaces ``(positively) homogeneous'' with ``(positively) quasi-homogeneous'' and ``degree'' with ``weighted degree''. Thus, smooth positively quasi-homogeneous functions can also be studied through an algebraic lens.

The diffeomorphisms $\kappa_t$ pull back the coordinate vector fields $\coordvf{x^i}$ on $\R^n$ to $\kappa_t^*\coordvf{x^i} = t^{-w_i}\coordvf{x^i}$. Mnemonically, one thinks of $\coordvf{x^i}$ as having weighted degree $-w_i$ ``since $x^i$ is in the denominator''; formally, the filtration on $\powseries{\K}{x}$ induces a filtration on $\vf$ that starts in negative degrees and is compatible with the Lie algebra structure. We denote the filtration index with double brackets:
\begin{align*}
    \vf = \vfFilt{-w_n} \supseteq \vfFilt{-w_n + 1} \supseteq \vfFilt{-w_n + 2} \supseteq \cdots,
\end{align*}
where $\vfFilt{k}$ is the $\powseries{\K}{x}$-submodule of formal vector fields that act on $\powseries{\K}{x}$ by raising filtration degree by $k$. Equivalently, $\vfFilt{k}$ is generated by monomial vector fields $x^\alpha\coordvf{x^i}$ such that $\pair{w, \alpha} - w_i \geq k$. Similarly, the associated grading on $\K[x]$ makes $\vf_\poly$ into a graded Lie algebra
\begin{align}\label{eq:vf-poly-grading}
    \vf_\poly = \bigoplus_{k = -w_n}^\infty \vfGrad{k},
\end{align}
where the $\K$-linear subspace $\vfGrad{k}$ of \emph{quasi-homogeneous vector fields} of weighted degree $k$ acts on $\K[x]$ by raising grading degree by $k$. Equivalently, $\vfGrad{k}$ has a basis given by monomial vector fields $x^\alpha\coordvf{x^i}$ such that $\pair{w, \alpha} - w_i = k$; thus
\begin{align}\label{eq:quasi-homogeneous-vf}
    \vfGrad{k} \cong \bigoplus_{i = 1}^n \mathcal{P}^{w_i + k} \text{ as }\K\text{-vector spaces}.
\end{align}

\begin{remark}\label{rmk:kappa_t-laurent}
    By interpreting the right-hand side of \eqref{eq:weighted-action} as an $n$-tuple of formal power series, one obtains a $\K$-algebra endomorphism $\kappa_t \colon \powseries{\K}{t, x} \to \powseries{\K}{t, x}$ such that $\kappa_t(t) = t$ and $\kappa_t(x^i) = t^{w_i}x^i$. This is \emph{not} a formal isotopy, as its inverse would have to map $x^i \mapsto t^{-w_i}x^i$. But if one extends $\kappa_t$ to act on Laurent series in $t$ and suitably defines pullbacks in this setting, then $\vfGrad{k}$ is the set of polynomial vector fields $X$ satisfying $\kappa_t^*X = t^kX$. (When $\K = \R$, this is equivalent to requiring $\kappa_t^*X = t^kX$ in the smooth sense for all $t > 0$ by Grabowski--Rotkiewicz.)
\end{remark}

\subsection{Weights and evaluation}\label{subsec:weights-eval}

We now state and prove the promised weighted generalizations of Propositions \ref{prop:evaluative-isotopy-condition} and \ref{prop:evaluative-flow}.

\begin{proposition}\label{prop:evaluative-isotopy-weighted-condition}
    Let $\varphi_t$ be a formal isotopy of $\K^n$ with corresponding $n$-tuple written in the form $\varphi(t, x) = \sum_{k = 0}^\infty t^k\varphi_k(x)$ where $\varphi_k(x) = \big(\varphi_k^1(x), \ldots, \varphi_k^n(x)\big) \in \powseries{\K}{x}^n$. If there exists a weighting $w = (w_1, \ldots, w_n)$ of $\K^n$ such that, for all $i \in \{1, \ldots, n\}$, we have
    \begin{align*}
        \Theta_w(\varphi_0^i) \geq w_i, && \Theta_w(\varphi_k^i) \geq w_i + 1 \text{ for all }k \geq 1, && \lim_{k \to \infty} \Theta(\varphi_k) = \infty,
    \end{align*}
    then $\varphi_t$ is evaluative.
\end{proposition}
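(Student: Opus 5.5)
The plan is to verify directly the two conditions of Definition~\ref{def:evaluative}. The tail condition $\lim_{k\to\infty}\Theta(\varphi_k)=\infty$ gives condition~\ref{itm:evaluative-isotopy-hom}. The weighted order conditions, together with the ordering $w_1\le\cdots\le w_n$, give a block-triangular structure for $D_x\varphi(t,0)$, and that structure gives condition~\ref{itm:evaluative-isotopy-aut}.

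\emph{Condition \ref{itm:evaluative-isotopy-hom}.} Fix $i$ and a multi-index $\alpha$. Let $c^i_{k,\alpha}\in\K$ denote the coefficient of $x^\alpha$ in $\varphi^i_k$. Then the coefficient of $x^\alpha$ in $\varphi^i(t,x)$ is $\sum_k c^i_{k,\alpha}t^k$. Since $\Theta(\varphi_k)\to\infty$, there is $K$ with $\Theta(\varphi_k)>|\alpha|$ for all $k>K$, so $c^i_{k,\alpha}=0$ for those $k$. Hence this coefficient is a polynomial in $t$, and $\varphi^i$ is evaluative. Next, for every $k\ge0$ the hypotheses give $\Theta_w(\varphi^i_k)\ge w_i\ge1$. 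The constant monomial has weighted degree $0$, so every $\varphi^i_k$ has zero constant term. Therefore $\varphi^i(t,0)=0$.

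\emph{Condition \ref{itm:evaluative-isotopy-aut}.} The $(i,j)$ entry of $D_x\varphi(t,0)$ is $\sum_k c^i_{k,e_j}t^k$, where $e_j$ is the $j$\textsuperscript{th} standard multi-index. The monomial $x^j$ has weighted degree $w_j$, which gives two constraints.
\begin{itemize}
\item[] For $k\ge1$, $\Theta_w(\varphi^i_k)\ge w_i+1$ forces $c^i_{k,e_j}=0$ unless $w_j>w_i$.
\item[] For $k=0$, $\Theta_w(\varphi^i_0)\ge w_i$ forces $c^i_{0,e_j}=0$ unless $w_j\ge w_i$.
\end{itemize}
Partition $\{1,\ldots,n\}$ into consecutive blocks of indices sharing a common weight; these blocks are consecutive because the weights are sorted. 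By the two constraints, $D_x\varphi(t,0)$ is block upper triangular. Moreover, its diagonal blocks, where $w_i=w_j$, contain no positive powers of $t$, so they coincide with the corresponding diagonal blocks of $D\varphi_0(0)$.

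The determinant of a block upper triangular matrix is the product of the determinants of its diagonal blocks. Hence $\det D_x\varphi(t,0)$ equals the product of the determinants of the diagonal blocks of $D\varphi_0(0)$, which is a constant in $\K$. The same $k=0$ constraint shows that $D\varphi_0(0)$ is itself block upper triangular, so this constant equals $\det D\varphi_0(0)$. That determinant is non-zero because $\varphi_t$ is a formal isotopy (Remark~\ref{rmk:isotopy-tuple-equivalence}). So $\det D_x\varphi(t,0)$ is a non-zero constant polynomial, which is condition~\ref{itm:evaluative-isotopy-aut}.

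The main point needing care is the last step. Entries with $w_i=w_j$ but $i\neq j$ need not vanish, so the argument must use a block (rather than strictly) triangular structure. It is exactly the strict inequality $\Theta_w(\varphi_k^i)\ge w_i+1$ for $k\ge1$ that keeps $t$ out of the diagonal blocks. Once the blocks are set up this way, the rest is bookkeeping.
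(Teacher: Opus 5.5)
Your proposal is correct and is essentially the paper's proof. The paper also gets condition (i) from $\Theta(\varphi_k)\to\infty$ together with $\Theta_w(\varphi_k^i)\ge w_i\ge 1$. It gets condition (ii) by showing that $D\varphi_k(0)$ is block upper triangular for all $k$, with zero diagonal blocks for $k\ge 1$, so that $\det D_x\varphi(t,0)=\det D\varphi_0(0)\neq 0$. Your entry-wise coefficient bookkeeping makes the same observation via the weighted degree $w_j$ of $x^j$, where the paper phrases it as $\partial/\partial x^j$ lowering $\Theta_w$ by $w_j$.
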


Taking the trivial weighting $w = (1, \ldots, 1)$ recovers Proposition \ref{prop:evaluative-isotopy-condition} as the condition on $\varphi_0$ reduces to $\Theta(\varphi_0) \geq 1$, which formal isotopies always satisfy (Remark \ref{rmk:isotopy-tuple-equivalence}). Note that the limit condition is stated with \emph{unweighted} Big Theta, but it is equivalent to requiring that $\lim_{k \to \infty} \Theta_w(\varphi_k^i) = \infty$ for each $i$ since the filtration topology defined by $w$ coincides with the $\maxideal$-adic topology; both limits are equivalent to saying that $\lim_{k \to \infty} \varphi_k^i = 0$ for each $i$ in this topology.

\begin{proof}
    By definition, $\Theta(\varphi_k)$ is the smallest $\ell \in \N$ for which there exists $i \in \{1, \ldots, n\}$ and $\alpha \in \N^n$ such that $|\alpha| = \ell$ and the coefficient of $x^\alpha$ in $\varphi_k^i(x)$ is non-zero (unless $\varphi_k(x) = (0, \ldots, 0)$). For any particular $\alpha$, we have $\Theta(\varphi_k) > |\alpha|$ for sufficiently large $k$ since $\Theta(\varphi_k) \to \infty$. Thus the coefficient of $x^\alpha$ in each $\varphi_k^i(x)$ is non-zero for finitely many $k$'s, so the coefficient of $x^\alpha$ in $\varphi^i(t, x) = \sum_k t^k\varphi_k^i(x)$ is actually a polynomial in $t$. As $\alpha$ was arbitrary, each $\varphi^i$ is an evaluative power series. Since $\Theta_w(\varphi_k^i) \geq w_i \geq 1$ for all $k \geq 0$, we have $\varphi_k^i(x) \in \mathcal{F}^{w_i} \subseteq \mathcal{F}^1 = \maxideal$, hence $\varphi^i(t, 0) = \sum_k t^k\varphi_k^i(0) = 0$. Thus condition \ref{itm:evaluative-isotopy-hom} of Definition \ref{def:evaluative} is satisfied, and we are left with checking condition \ref{itm:evaluative-isotopy-aut}.

    We claim that $D\varphi_k(0)$ is a block upper triangular matrix for all $k \geq 0$, and \emph{strictly} block upper triangular for all $k \geq 1$; that is,
    \begin{align}\label{eq:block-diagonal-jacobian}
        D\varphi_k(0) & =
        \begin{pmatrix}
            A_{k,1} & * & \cdots & * \\
            O & \ddots & \ddots & \vdots \\
            \vdots & \ddots & \ddots & * \\
            O & \cdots & O & A_{k,m}
        \end{pmatrix}, \quad \text{where } A_{k, \ell} = O \text{ for } k \geq 1.
    \end{align}
    Here, $m$ denotes the number of \emph{distinct} weights and each $A_{k, \ell}$ is a $d_\ell \times d_\ell$ matrix, where $d_\ell$ is the number of coordinates assigned to the $\ell$\textsuperscript{th} smallest weight value.\footnote{For example, if $w = (1, 4, 5, 5)$, then there are $m = 3$ distinct weights and $(d_1, d_2, d_3) = (1, 1, 2)$.}

    To prove the claim, recall that $\coordvf{x^j}$ decreases filtration degree on $\powseries{\K}{x}$ by $w_j$, so
    \begin{align*}
        \Theta_w\left(\f{\p \varphi_k^i}{\p x^j}\right) \geq \Theta_w(\varphi_k^i) - w_j.
    \end{align*}
    Since we require weights to be sorted from smallest to largest in Definition \ref{def:weighting} and the partitioning of $D\varphi_k(0)$ illustrated in \eqref{eq:block-diagonal-jacobian} is along blocks corresponding to distinct weights, it follows that entries below the main block diagonal are those indexed by $(i, j)$ where $w_i > w_j$. Since $\Theta_w(\varphi_k^i) \geq w_i$ for all $k \geq 0$, we thus have $\Theta_w\Big(\f{\p \varphi_k^i}{\p x^j}\Big) \geq w_i - w_j > 0$, hence $\f{\p \varphi_k^i}{\p x^j}(0) = 0$ below the main block diagonal. Entries within the main block diagonal are indexed by $(i, j)$ with $w_i = w_j$, so if $k \geq 1$, then $\Theta_w\Big(\f{\p \varphi_k^i}{\p x^j}\Big) \geq w_i - w_j + 1 = 1$, hence also $\f{\p \varphi_k^i}{\p x^j}(0) = 0$. This proves the claim.

    Now observe that $D_x\varphi(t, x) = \sum_{k = 0}^\infty t^kD\varphi_k(x)$, so upon setting $x^1 = \cdots = x^n = 0$, we obtain
    \begin{align*}
        D_x\varphi(t, 0) & = \sum_{k = 0}^\infty t^kD\varphi_k(0) = 
        D\varphi_0(0) + \sum_{k = 1}^\infty t^k \cdot \text{(strictly block upper triangular matrices)}.
    \end{align*}
    Consequently $\det D_x\varphi(t, 0) = \det D\varphi_0(0)$, which is a non-zero element of $\K$ since $\varphi_t$ is a formal isotopy (Remark \ref{rmk:isotopy-tuple-equivalence}). Thus condition \ref{itm:evaluative-isotopy-aut} of Definition \ref{def:evaluative} is satisfied.
\end{proof}

\begin{proposition}\label{prop:evaluative-flow-weighted-condition}
    Suppose $X_t \in \tdvf$ is written in the form $X_t = \sum_{k = 0}^\infty t^kX_k$ where $X_k \in \vf$. Identify the flow $\varphi_t$ of $X_t$ with an $n$-tuple $\varphi(t, x) = \sum_{k = 0}^\infty t^k\varphi_k(x)$ where $\varphi_k(x) \in \powseries{\K}{x}^n$. If there exists a weighting $w = (w_1, \ldots, w_n)$ of $\K^n$ such that $X_k \in \vfFilt{k + 1}$ for all $k \geq 0$, then with respect to this weighting, one has $\Theta_w(\varphi_k^i) \geq w_i + k$ for all $i \in \{1, \ldots, n\}$ and $k \geq 0$. In particular, $\varphi_t$ is an evaluative formal isotopy by Proposition \ref{prop:evaluative-isotopy-weighted-condition}.
\end{proposition}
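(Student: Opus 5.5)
We mimic the proof of Proposition \ref{prop:evaluative-flow} in dimension $n = 1$, replacing unweighted order by $\Theta_w$ and tracking each component separately. We proceed by strong induction on $k$. The base case is immediate: by Theorem \ref{thm:flow-existence-uniqueness}\ref{itm:initial-condition} we have $\varphi_0^i(x) = x^i$, so $\Theta_w(\varphi_0^i) = w_i$.

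For the inductive step, assume $\Theta_w(\varphi_\ell^j) \geq w_j + \ell$ for all $j$ and all $\ell \leq m$. Apply the flow property \ref{itm:flow-property} with $f = x^i$:
\begin{align*}
    \coordvf{t}\big(\varphi_t(x^i)\big) = \varphi_t\big(X_t(x^i)\big).
\end{align*}
The coefficient of $t^m$ on the left is $(m + 1)\varphi_{m + 1}^i(x)$. Since $m + 1$ is invertible in $\K$, it suffices to show that the coefficient of $t^m$ on the right has weighted order at least $w_i + m + 1$. The hypothesis $X_k \in \vfFilt{k + 1}$ means $X_k(x^i) \in \mathcal{F}^{w_i + k + 1}$. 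We therefore expand
\begin{align*}
    X_k(x^i) = \sum_{\pair{w, \alpha} \geq w_i + k + 1} c^i_{k, \alpha} x^\alpha.
\end{align*}
Applying $\varphi_t$ term by term (continuity, and $\varphi_t$ fixes $t$) gives
\begin{align*}
    \varphi_t\big(X_t(x^i)\big) = \sum_k \sum_\alpha c^i_{k, \alpha} t^k \prod_{j = 1}^n \varphi_t(x^j)^{\alpha_j}.
\end{align*}
Writing $\varphi_t(x^j) = \sum_\ell t^\ell \varphi_\ell^j$, the coefficient of $t^m$ is a (convergent) sum of terms of the form $c^i_{k, \alpha}\prod_{j}\prod_{r = 1}^{\alpha_j} \varphi^j_{\beta_{j, r}}$, where $k + \sum_{j, r} \beta_{j, r} = m$. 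In particular every $\beta_{j, r} \leq m$, so the induction hypothesis applies. Using additivity of $\Theta_w$ on products, each such term has
\begin{align*}
    \Theta_w \geq \sum_{j, r} (w_j + \beta_{j, r}) = \pair{w, \alpha} + (m - k) \geq w_i + k + 1 + m - k = w_i + m + 1.
\end{align*}
Since $\mathcal{F}^{w_i + m + 1}$ is a closed ideal (the filtration topology equals the $\maxideal$-adic one), the convergent sum also lies in it. This closes the induction.

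The only point needing care, and the main obstacle, is bookkeeping. We must index the multi-index expansion of $\prod_j \varphi_t(x^j)^{\alpha_j}$ by families $\beta_{j, r}$, and justify that the double sum over $k$ and $\alpha$ of $t^m$-coefficients converges $\maxideal$-adically. For convergence, note that $\pair{w, \alpha}$ bounds weighted order from below and only finitely many $\alpha$ have bounded $\pair{w, \alpha}$, so it suffices to argue coefficientwise.

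Finally, we deduce evaluativity by checking the hypotheses of Proposition \ref{prop:evaluative-isotopy-weighted-condition}. We have $\Theta_w(\varphi_0^i) = w_i$, and $\Theta_w(\varphi_k^i) \geq w_i + k \geq w_i + 1$ for $k \geq 1$. Moreover, $\Theta(\varphi_k) \geq (\min_i \Theta_w(\varphi_k^i))/w_n \geq k/w_n \to \infty$, because $\Theta_w \leq w_n \Theta$.
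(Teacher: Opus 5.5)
Your proposal is correct and follows the paper's proof essentially step for step: strong induction on $k$, the flow property applied to $x^i$, expanding $\prod_j \varphi_t(x^j)^{\alpha_j}$ over multi-indices (your $\beta_{j,r}$ plays the role of the paper's concatenated $\beta$ split by the sets $L^j_\alpha$), and adding weighted orders to get at least $w_i + m + 1$. Your extra remarks, namely the closedness of $\mathcal{F}^{w_i+m+1}$ for the convergent sum and the explicit bound $\Theta(\varphi_k) \geq k/w_n \to \infty$ needed to invoke Proposition \ref{prop:evaluative-isotopy-weighted-condition}, fill in details that the paper leaves implicit.
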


\begin{proof}
    We proceed by (strong) induction on $k$. By Theorem \ref{thm:flow-existence-uniqueness}\ref{itm:initial-condition} we have $\varphi_0^i(x) = x^i$, hence $\Theta_w(\varphi_0^i) = w_i$ for each $i \in \{1, \ldots, n\}$, establishing the base case. For the inductive step, assume that $\Theta_w(\varphi_k^i) \geq w_i + k$ for all $i \in \{1, \ldots, n\}$ and $k \leq m$. The coefficient of $t^m$ on the left-hand side of Theorem \ref{thm:flow-existence-uniqueness}\ref{itm:flow-property} applied to each $x^i$ is $(m + 1)\varphi_{m + 1}^i(x)$; we want to find $\Theta_w$ of the corresponding coefficient of $t^m$ on the right-hand side. Warning: an unglamorous calculation follows. The reader may wish to skip to the final expression \eqref{eq:horrible-expression} for this coefficient.

    As $\Theta_w(x^i) = w_i$ and $X_k$ raises filtration degree by $k + 1$, we have $\Theta_w\big(X_k(x^i)\big) \geq w_i + k + 1$. Thus
    \begin{align*}
        X_k(x^i) & = \sum_{\substack{\alpha \in \N^n \\ \pair{w, \alpha} \geq w_i + k + 1}} c_{k, \alpha}^i x^\alpha, \quad \text{for some coefficients }c_{k, \alpha}^i \in \K.
    \end{align*}
    As $\varphi_t$ is a $\K$-algebra automorphism fixing $t$, we thus have
    \begin{align*}
        \varphi_t\big(X_t(x^i)\big) & = \sum_{k = 0}^\infty t^k\varphi_t\big(X_k(x^i)\big) = \sum_{k = 0}^\infty \sum_{\substack{\alpha \in \N^n \\ \pair{w, \alpha} \geq w_i + k + 1}} c_{k, \alpha}^i t^k\prod_{j = 1}^n \varphi_t(x^j)^{\alpha_j}.
    \end{align*}
    For each $j \in \{1, \ldots, n\}$, we can express $\varphi_t(x^j)^{\alpha_j}$ as a sum over multi-indices of length $\alpha_j$ like in the proof of Proposition \ref{prop:evaluative-flow}:
    \begin{align*}
        \varphi_t(x^j)^{\alpha_j} = \left(\sum_{\ell = 0}^\infty t^\ell \varphi_\ell^j(x)\right)^{\alpha_j} = \sum_{\beta^j \in \N^{\alpha_j}} t^{|\beta^j|}\prod_{\ell = 1}^{\alpha_j} \varphi_{\beta^j_\ell}^j(x).
    \end{align*}
    The $n$ multi-indices $\beta^j \in \N^{\alpha_j}$ can be concatenated into a single multi-index $\beta := (\beta^1, \ldots, \beta^n)$ of total length $|\alpha|$. Conversely, any $\beta \in \N^{|\alpha|} \cong \N^{\alpha_1} \times \cdots \times \N^{\alpha_n}$ splits into $n$ multi-indices $\beta^j \in \N^{\alpha_j}$, where $\beta^j$ is the part of $\beta$ indexed by $L_\alpha^j := \{\ell \in \N \mid \alpha_1 + \cdots + \alpha_{j - 1} < \ell \leq \alpha_1 + \cdots + \alpha_j\}$. Thus
    \begin{align*}
        \prod_{j = 1}^n \varphi_t(x^j)^{\alpha_j} & = \sum_{\beta^1 \in \N^{\alpha_1}} \cdots \sum_{\beta^n \in \N^{\alpha_n}} t^{|\beta^1| + \cdots + |\beta^n|} \prod_{j = 1}^n \prod_{\ell = 1}^{\alpha_j} \varphi_{\beta^j_\ell}^j(x) \\
        & = \sum_{\beta \in \N^{|\alpha|}} t^{|\beta|} \prod_{j = 1}^n \prod_{\ell \in L_\alpha^j} \varphi_{\beta_\ell}^j(x).
    \end{align*}
    It follows that the coefficient of $t^m$ on the right-hand side of Theorem \ref{thm:flow-existence-uniqueness}\ref{itm:flow-property} applied to each $x^i$ is
    \begin{align}\label{eq:horrible-expression}
        \sum_{k = 0}^\infty \sum_{\substack{\alpha \in \N^n \\ \pair{w, \alpha} \geq w_i + k + 1}} \sum_{\substack{\beta \in \N^{|\alpha|} \\ |\beta| + k = m}} c^i_{k, \alpha}\prod_{j = 1}^n \prod_{\ell \in L_\alpha^j} \varphi_{\beta_\ell}^j(x). \tag{$\dagger$}
    \end{align}
    The salient point is that for each fixed $k$, $\alpha$, and $\beta$ appearing in \eqref{eq:horrible-expression}, we have $\beta_\ell \leq |\beta| \leq m$ for each $1 \leq \ell \leq |\alpha|$, hence $\Theta_w(\varphi_{\beta_\ell}^j) \geq w_j + \beta_\ell$ by the induction hypothesis. Thus
    \begin{align*}
        \Theta_w\Bigg(\prod_{j = 1}^n \prod_{\ell \in L_\alpha^j} \varphi_{\beta_\ell}^j\Bigg) = \sum_{j = 1}^n \sum_{\ell \in L_\alpha^j} \Theta_w(\varphi_{\beta_\ell}^j) \geq \sum_{j = 1}^n \sum_{\ell \in L_\alpha^j} (w_j + \beta_\ell).
    \end{align*}
    Observing that
    \begin{align*}
        \sum_{j = 1}^n \sum_{\ell \in L_\alpha^j} w_j & = \sum_{j = 1}^n w_j\alpha_j = \pair{w, \alpha} \geq w_i + k + 1, & \sum_{j = 1}^n \sum_{\ell \in L_\alpha^j} \beta_\ell & = |\beta|,
    \end{align*}
    we see that $\sum_{j = 1}^n \sum_{\ell \in L_\alpha^j} (w_j + \beta_\ell) \geq w_i + k + 1 + |\beta| = w_i + m + 1$. Thus $\Theta_w(\varphi_{m + 1}^i) \geq w_i + m + 1$, completing the induction.
\end{proof}

\begin{example}
    As noted in Remark \ref{rmk:evaluative-isotopy-converse-counterexample}, the flow of $y\coordvf{x}$ is the formal isotopy of $\K^2$ corresponding to the $2$-tuple $(x + ty, y)$. We could not conclude that this is evaluative from either Propositions \ref{prop:evaluative-isotopy-condition} or \ref{prop:evaluative-flow}, but we can from Proposition \ref{prop:evaluative-flow-weighted-condition} using any weighting $(w_1, w_2)$ such that $w_1 < w_2$. (Note that with respect to such a weighting, $y\coordvf{x}$ is exactly the ``weighted non-linear perturbation'' in the vector field \eqref{eq:weighted-euler-like-example} when $k = 1$.) The author does not know the answer to the following question: if a formal isotopy of $\K^n$ is evaluative, does there exist \emph{some} weighting of $\K^n$ with respect to which the hypotheses of Proposition \ref{prop:evaluative-isotopy-weighted-condition} hold, perhaps after permuting coordinates?
\end{example}

Requiring that $X_k \in \vfFilt{k + 1}$ in Proposition \ref{prop:evaluative-flow-weighted-condition} is probably stronger than needed to conclude that the flow of $X_t := \sum_k t^kX_k$ is evaluative via Proposition \ref{prop:evaluative-isotopy-weighted-condition}. For example, \eqref{eq:horrible-expression} stays almost the same with the weaker hypothesis that $X_k \in \vfFilt{1}$ for all $k \geq 0$, except the condition $\pair{w, \alpha} \geq w_i + k + 1$ in the sum over $\alpha \in \N^n$ is replaced with $\pair{w, \alpha} \geq w_i + 1$; this is enough to prove by induction that $\Theta_w(\varphi_k^i) \geq w_i + 1$ for all $k \geq 1$. We conjecture that $\Theta(\varphi_k) \to \infty$ if $\Theta(X_k) \to \infty$ (the Big Theta notation for $X_k$ understood by identifying it with the $n$-tuple of its components), which would allow the use of Proposition \ref{prop:evaluative-isotopy-weighted-condition}. It turns out that the vector fields we will flow along in Section \ref{sec:weighted-lin} satisfy the hypothesis of Proposition \ref{prop:evaluative-flow-weighted-condition} as written, so we content ourselves with the current statement.

\section{Weighted linearization}\label{sec:weighted-lin}

\subsection{Weighted linear approximations}\label{subsec:weighted-lin-approx}

Every formal function $f \in \powseries{\K}{x}$ can be written as a convergent infinite sum of homogeneous polynomials using the $\maxideal$-adic topology (Equation \eqref{eq:homogeneous-polynomial-decomposition}). As the filtration topology on $\powseries{\K}{x}$ induced by a weighting $w = (w_1, \ldots, w_n)$ coincides with the $\maxideal$-adic topology, we can also write $f = \sum_{k = 0}^\infty f_k$ where $f_k \in \mathcal{P}^k$ for each $k$. Similarly, every formal vector field $X \in \vf$ can be written as an infinite sum of quasi-homogeneous vector fields:
\begin{align*}
    X = \sum_{k = -w_n}^\infty X_{[k]}, \quad \text{where } X_{[k]} \in \vfGrad{k}.
\end{align*}
(The sum starts with negative $k$ as $\coordvf{x^i}$ has weighted degree $-w_i$.)

\begin{definition}\label{def:weighted-lin}
    We call $X_{[0]}$ the \textbf{weighted linear approximation} of $X$ with respect to $w$. A formal vector field $X \in \vf$ is called \textbf{admissible} with respect to $w$ (or $w$-admissible) if $X \in \vfFilt{0}$, or equivalently, if $X_{[k]} = 0$ for all $k < 0$. We use the same terminology for a smooth vector field on an open neighbourhood of $0 \in \R^n$, referring to its formal Taylor series at $0$.
\end{definition}

\begin{remark}\label{rmk:kappa_t-pullback}
    Equivalently, thinking of the $\K$-algebra endomorphism $\kappa_t \colon \powseries{\K}{t, x} \to \powseries{\K}{t, x}$ from Remark \ref{rmk:kappa_t-laurent} as acting on Laurent series in $t$, the pullback of any $X \in \vf$ along $\kappa_t$ is given by
    \begin{align*}
        \kappa_t^*X = \sum_{k = -w_n}^\infty t^kX_{[k]}.
    \end{align*}
    This is generally not a time-dependent formal vector field in the sense of Section \ref{subsec:time-dependence} as the right-hand side can include negative powers of $t$, but $w$-admissible formal vector fields are precisely those for which $\kappa_t^*X$ \emph{does} define a time-dependent formal vector field---in fact, an \emph{evaluative} one, with $(\kappa_t^*X)|_{t = 0} = X_{[0]}$ and $(\kappa_t^*X)|_{t = 1} = X$. For a smooth vector field $X$ on $\R^n$, $w$-admissibility means that $\lim_{t \to 0^+} (\kappa_t^*X)$ exists and equals $X_{[0]}$, where $\kappa_t \colon \R^n \to \R^n$ is the diffeomorphism \eqref{eq:weighted-action}.
\end{remark}

\begin{remark}
    If $X = \sum_i X^i\coordvf{x^i}$ (formal or smooth) is admissible with respect to \emph{any} weighting, then its components satisfy $X^i(0) = 0$ for all $i$---a consequence of requiring weights to be strictly positive in Definition \ref{def:weighting}. Conversely, if $X^i(0) = 0$ for all $i$, then $X$ is admissible with respect to the trivial weighting. In Loizides--Meinrenken's differential-geometric framework of weightings \cite{loizides_differential_2023}, one can declare some coordinates to have weight $0$; these correspond to directions along a specified submanifold (for us, a single point) of a smooth manifold. One obtains quasi-homogeneous approximations of not only functions and vector fields at a point, but also tensor fields, differential forms, and multivector fields along submanifolds. We envisage generalizations of the present work to quasi-homogeneous normal forms for such objects along submanifolds.
\end{remark}

We now turn to the question of formal weighted linearizability for vector fields:
\begin{center}
    \emph{Given a $w$-admissible vector field $X \in \vfFilt{0}$ with weighted linear approximation $X_{[0]} \in \vfGrad{0}$,\\when does there exist a formal diffeomorphism $\varphi \colon \powseries{\K}{x} \to \powseries{\K}{x}$ such that $\varphi^*X = X_{[0]}$?}
\end{center}
Before giving a general answer, we revisit a particularly special case from Section \ref{subsec:background}: recall that the \emph{weighted Euler vector field} associated to $w = (w_1, \ldots, w_n)$ is given by
\begin{align}\label{eq:weighted-euler}
    \euler_w := \sum_{i = 1}^n w_ix^i\coordvf{x^i} \in \vfGrad{0}.
\end{align}
For all $\alpha \in \N^n$ and $i \in \{1, \ldots, n\}$, one has $\euler_w(x^\alpha) = \pair{w, \alpha} x^\alpha$ and $\big[\euler_w, x^\alpha\coordvf{x^i}\big] = \left(\pair{w, \alpha} - w_i\right)x^\alpha\coordvf{x^i}$. It follows that a polynomial $f \in \K[x]$ lies in $\mathcal{P}^k$ if and only if $\euler_w(f) = kf$, and a polynomial vector field $X \in \vf_\poly$ lies in $\vfGrad{k}$ if and only if $[\euler_w, X] = kX$. A $w$-admissible vector field (formal or smooth) is called \emph{weighted Euler-like} if its weighted linear approximation is equal to $\euler_w$. We obtain a formal version of \cite[Lemma 2.5]{meinrenken_euler-like_2021} over any field of characteristic zero:

\begin{proposition}\label{prop:weighted-euler-like-lin}
    If $X \in \vf$ is weighted Euler-like with respect to a weighting $w$ of $\K^n$, then there exists a formal diffeomorphism $\varphi$ of $\K^n$ such that $\varphi^*X = \euler_w$.
\end{proposition}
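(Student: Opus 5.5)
The plan is to apply the formal Moser trick (Corollary \ref{cor:formal-moser}) to the family obtained by rescaling $X$ with the weighted dilations $\kappa_t$ of Remark \ref{rmk:kappa_t-pullback}. Write $X = \sum_{k \geq 0} X_{[k]}$ with $X_{[0]} = \euler_w$; the sum starts at $k = 0$ by admissibility. Set
\begin{align*}
    A_t := \sum_{k = 0}^\infty t^kX_{[k]} = \euler_w + \sum_{k = 1}^\infty t^kX_{[k]} \in \tdvf .
\end{align*}
This converges because $X_{[k]} \in \vfFilt{k}$ and the filtration topology is the $\maxideal$-adic one.

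First I will check that $A_t$ is evaluative. Take the coefficient of $x^\alpha\coordvf{x^i}$. It can only come from the single summand with $k = \pair{w, \alpha} - w_i$, so it equals $c\,t^{\pair{w,\alpha} - w_i}$, which is a monomial in $t$. Evaluating gives $A_0 = \euler_w$ and $A_1 = X$.

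Next I will find the Moser vector field, using the identity $[\euler_w, Y] = kY$ for $Y \in \vfGrad{k}$. Applying it term by term gives
\begin{align*}
    [\euler_w, A_t] = \sum_{k} k\,t^kX_{[k]} = t\,\f{dA_t}{dt}.
\end{align*}
I then define
\begin{align*}
    X_t := \sum_{k = 1}^\infty t^{k - 1}X_{[k]},
\end{align*}
so that $tX_t = A_t - \euler_w$. Since $[A_t, A_t] = 0$, we get
\begin{align*}
    t[X_t, A_t] = [A_t - \euler_w, A_t] = -[\euler_w, A_t] = -t\,\f{dA_t}{dt}.
\end{align*}
The algebra $\powseries{\K}{t, x}$ is an integral domain and $\tdvf$ is a free module over it, so $t$ is a non-zero-divisor and can be cancelled. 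This yields $\f{dA_t}{dt} + \lie_{X_t}A_t = 0$.

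In the notation of Proposition \ref{prop:evaluative-flow-weighted-condition}, the $t^k$-coefficient of $X_t$ is $X_{[k+1]}$, which lies in $\vfFilt{k+1}$ for every $k \geq 0$. That proposition therefore says the flow $\varphi_t$ of $X_t$ is evaluative. Corollary \ref{cor:formal-moser} with $\tau = 1$ then gives $\varphi_1^*X = \varphi_1^*A_1 = A_0 = \euler_w$, so $\varphi := \varphi_1$ is the required formal diffeomorphism.

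The main obstacle is justifying the term-by-term manipulations. The identity $[\euler_w, X_{[k]}] = kX_{[k]}$ is stated only for polynomial vector fields, and I need it for the infinite sum $A_t$. I would handle this by noting that $\operatorname{ad}_{\euler_w}$ and the bracket with $X_t$ are continuous in the $\maxideal_{t,x}$-adic topology, since they are computed componentwise by continuous derivations, and then passing to the limit of partial sums.
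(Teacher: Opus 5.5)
Your proposal is correct and follows essentially the same route as the paper. It uses the same interpolating family $\sum_k t^kX_{[k]}$ and the same Moser vector field $\sum_k t^kX_{[k+1]}$ (the paper's $U_t$), derives $\f{dA_t}{dt} + [X_t, A_t] = 0$ by cancelling $t$ in a torsion-free module, and concludes via Proposition \ref{prop:evaluative-flow-weighted-condition} and Corollary \ref{cor:formal-moser} at $\tau = 1$. Your extra remarks on evaluativity (monomial coefficients in $t$) and on the continuity needed for the term-by-term bracket computation fill in details that the paper passes over silently.
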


\begin{proof}
    By assumption, we can write $X = \sum_{k = 0}^\infty X_{[k]}$ where $X_{[k]} \in \vfGrad{k}$ and $X_{[0]} = \euler_w$. The time-dependent formal vector fields given by $X_t := \sum_{k = 0}^\infty t^kX_{[k]}$ and $U_t := \sum_{k = 0}^\infty t^kX_{[k + 1]}$ thus satisfy
    \begin{align*}
        [\euler_w, X_t] & = \sum_{k = 0}^\infty t^k[\euler_w, X_{[k]}] = \sum_{k = 0}^\infty kt^kX_{[k]} = t\f{dX_t}{dt}, \\
        X_t - \euler_w & = \bigg(\euler_w + \sum_{k = 1}^\infty t^kX_{[k]}\bigg) - \euler_w = tU_t.
    \end{align*}
    It follows that $t\left(\f{dX_t}{dt} + [U_t, X_t]\right) = 0$, hence $\f{dX_t}{dt} + [U_t, X_t] = 0$ as $\tdvf$ is torsion-free. Now observe that $X_t$ is evaluative, with $X_t|_{t = 0} = \euler_w$ and $X_t|_{t = 1} = X$; on the other hand, the flow $\varphi_t$ of $U_t$ is evaluative by Proposition \ref{prop:evaluative-flow-weighted-condition}. Thus the formal diffeomorphism $\varphi := \varphi_1$ satisfies $\varphi^*X = \euler_w$ by the $(r, s) = (1, 0)$ case of the formal Moser trick (Corollary \ref{cor:formal-moser}).
\end{proof}

To motivate the general answer, suppose that $X$ is a $w$-admissible smooth vector field on $\R^n$ with weighted linear approximation $X_{[0]}$ and $\varphi \colon \R^n \to \R^n$ is a diffeomorphism satisfying $\varphi^*X = X_{[0]}$ as smooth vector fields. Seeking a Moser-type construction for $\varphi$, we want to think of $X_{[0]}$ and $X$ as time slices of a one-parameter family of vector fields $X_t$ at $t = 0$ and $t = 1$, respectively, and $\varphi$ as the $t = 1$ time slice of a one-parameter family of diffeomorphisms $\varphi_t$. Remark \ref{rmk:kappa_t-pullback} suggests the candidate $X_t := \kappa_t^*X$ where $\kappa_t \colon \R^n \to \R^n$ is the diffeomorphism \eqref{eq:weighted-action}: as $\kappa_1 = \id_{\R^n}$, we have $X_t|_{t = 1} = X$, and $w$-admissibility means that $X_t$ extends smoothly to $t = 0$ by $X_t|_{t = 0} = X_{[0]}$. If we define $\varphi_t := \kappa_t^{-1} \circ \varphi \circ \kappa_t$ for $t > 0$, then $\varphi_1 = \varphi$ and\footnote{Upper stars reverse order here because we work with actual diffeomorphisms in this discussion, c.f. Remark \ref{rmk:pullback-notation}.}
\begin{align*}
    \varphi_t^*X_t = (\kappa_t \circ \varphi_t)^*X = (\varphi \circ \kappa_t)^*X = \kappa_t^*(\varphi^*X) = \kappa_t^*X_{[0]} = X_{[0]}, \qquad \forall t > 0,
\end{align*}
the last equality using Remark \ref{rmk:kappa_t-laurent} and the fact that $X_{[0]} \in \vfGrad{0}$. Thus $\varphi_t^*X_t$ is constant with respect to $t$. Assuming that $\varphi_t$ extends smoothly to $t = 0$, taking a time derivative yields
\begin{align}\label{eq:weighted-lin-moser}
    \f{dX_t}{dt} + [U_t, X_t] = 0,
\end{align}
where $U_t$ is the time-dependent vector field that generates $\varphi_t$ as its flow. Now observe that $\{\kappa_t\}_{t > 0}$ defines a representation of the multiplicative group $\R^+$ on $\R^n$ as $\kappa_{st} = \kappa_s \circ \kappa_t$ for all $s, t > 0$; consequently, the one-parameter family of diffeomorphisms $\{\varphi_t\}_{t > 0}$ enjoys the homogeneity property
\begin{align*}
    \varphi_{st} = \kappa_t^{-1} \circ \varphi_s \circ \kappa_t, \qquad \forall s, t > 0.
\end{align*}
For fixed $t$, differentiating both sides of this equation with respect to $s$ yields $tU_{st} = \kappa_t^*U_s$. Assuming $U_s$ is analytic with respect to the time parameter, so that $U_s = \sum_{k = 0}^\infty s^kU_{[k + 1]}$ for some time-independent vector fields $U_{[k + 1]}$ (the shifted index to be explained imminently), we obtain
\begin{align*}
    \sum_{k = 0}^\infty s^kt^{k + 1}U_{[k + 1]} = \sum_{k = 0}^\infty s^k\kappa_t^*U_{[k + 1]}.
\end{align*}
Comparing the coefficient of $s^k$ on both sides yields $\kappa_t^*U_{[k + 1]} = t^{k + 1}U_{[k + 1]}$; that is, $U_{[k + 1]} \in \vfGrad{k + 1}$. If we also assume that $X_t = \sum_{k = 0}^\infty t^kX_{[k]}$ is analytic, then Equation \eqref{eq:weighted-lin-moser} becomes
\begin{align*}
    \sum_{k = 0}^\infty t^k\Bigg((k + 1)X_{[k + 1]} + \sum_{\substack{i, j \geq 0 \\ i + j = k}} \left[U_{[i + 1]}, X_{[j]}\right]\Bigg) = 0.
\end{align*}
Thus each coefficient of $t^k$ vanishes, and we obtain a weighted version of the \emph{homological equation}
\begin{align}\label{eq:homological-equation}
    \left[X_{[0]}, U_{[k + 1]}\right] = (k + 1)X_{[k + 1]} - \sum_{i = 0}^{k - 1} \left[X_{[k - i]}, U_{[i + 1]}\right].
\end{align}
Both sides are quasi-homogeneous vector fields of weighted degree $k + 1$. In particular, if the adjoint operator $\ad_{X_{[0]}} := [X_{[0]}, \cdot] \colon \vfGrad{k + 1} \to \vfGrad{k + 1}$ is invertible, then Equation \eqref{eq:homological-equation} provides a recursive formula that uniquely determines $U_{[k + 1]}$ in terms of $U_{[1]}, \ldots, U_{[k]}$.

These heuristics assumed analyticity with respect to time, but the formal world does not concern itself with such matters of convergence. By turning the above argument on its head, we thus obtain:

\begin{theorem}\label{thm:weighted-linearizability}
    Suppose $X \in \vfFilt{0}$ is $w$-admissible, with weighted linear approximation $X_{[0]} \in \vfGrad{0}$. If $\ad_{X_{[0]}} \colon \vfGrad{k} \to \vfGrad{k}$ is invertible for all $k \geq 1$, then there exists a formal diffeomorphism $\varphi$ of $\K^n$ such that $\varphi^*X = X_{[0]}$.
\end{theorem}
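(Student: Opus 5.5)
We reverse the heuristic argument preceding the statement and mimic the proof of Proposition \ref{prop:weighted-euler-like-lin}. Write $X = \sum_{k \geq 0} X_{[k]}$ with $X_{[k]} \in \vfGrad{k}$, which is possible by $w$-admissibility, and set
\begin{align*}
    X_t := \sum_{k = 0}^\infty t^kX_{[k]} \in \tdvf .
\end{align*}
This is evaluative, with $X_t|_{t = 0} = X_{[0]}$ and $X_t|_{t = 1} = X$, since each coefficient of a fixed monomial $x^\alpha\coordvf{x^i}$ occurs in only one $X_{[k]}$ (namely $k = \pair{w,\alpha} - w_i$). Hence every component coefficient is a monomial in $t$.

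Next we define quasi-homogeneous $U_{[k+1]} \in \vfGrad{k+1}$ recursively for $k = 0, 1, 2, \ldots$ by the homological equation \eqref{eq:homological-equation}. The right-hand side involves only $X_{[j]}$ and previously constructed $U_{[i+1]}$ with $i < k$. Since the bracket respects the grading \eqref{eq:vf-poly-grading}, that right-hand side lies in $\vfGrad{k+1}$. Because $\ad_{X_{[0]}}$ is invertible on $\vfGrad{k+1}$, a unique solution $U_{[k+1]} \in \vfGrad{k+1}$ exists. We then set
\begin{align*}
    U_t := \sum_{k \geq 0} t^kU_{[k+1]} .
\end{align*}
This series converges in $\tdvf$, since $t^k U_{[k+1]}$ has components in $\maxideal_{t,x}^k$.

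The main verification is that $\f{dX_t}{dt} + [U_t, X_t] = 0$. Expanding as power series in $t$ and using continuity and bilinearity of the bracket, the coefficient of $t^k$ is
\begin{align*}
    (k+1)X_{[k+1]} + \sum_{i+j=k}[U_{[i+1]}, X_{[j]}].
\end{align*}
Separating off the $j = 0$ term $[U_{[k+1]}, X_{[0]}] = -[X_{[0]}, U_{[k+1]}]$ shows that this vanishes exactly by \eqref{eq:homological-equation}. One must take care with signs and index ranges: the sum in \eqref{eq:homological-equation} runs over $i \leq k - 1$, corresponding to $j = k - i \geq 1$.

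Finally, $U_{[k+1]} \in \vfGrad{k+1} \subseteq \vfFilt{k+1}$, so Proposition \ref{prop:evaluative-flow-weighted-condition} shows that the flow $\varphi_t$ of $U_t$ is an evaluative formal isotopy. The formal Moser trick (Corollary \ref{cor:formal-moser}) with $(r,s) = (1,0)$, $A_t = X_t$ and $\tau = 1$ then gives $\varphi_1^*X = X_{[0]}$, so $\varphi := \varphi_1$ is the desired formal diffeomorphism.

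I expect no serious obstacle. The only delicate points are the bookkeeping in the coefficient comparison, and checking that infinite sums of quasi-homogeneous pieces define genuine (and evaluative) elements of $\tdvf$ so that brackets may be computed termwise. Both are handled by $\maxideal$-adic continuity, exactly as in the proof of Proposition \ref{prop:weighted-euler-like-lin}.
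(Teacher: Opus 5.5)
Your proposal is correct and follows the paper's proof essentially step for step. It uses the same interpolation $X_t = \sum_k t^k X_{[k]}$, the same recursive solution of \eqref{eq:homological-equation} for the $U_{[k+1]}$, Proposition \ref{prop:evaluative-flow-weighted-condition} for evaluativity of the flow, and the formal Moser trick at $\tau = 1$. You also write out the coefficient comparison giving $\frac{dX_t}{dt} + [U_t, X_t] = 0$, which the paper asserts only ``by construction''.
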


\begin{proof}
    As $X$ is $w$-admissible, we have $X = \sum_{k = 0}^\infty X_{[k]}$ where $X_{[k]} \in \vfGrad{k}$. Thus $X_t := \sum_{k = 0}^\infty t^kX_{[k]}$ is an evaluative formal vector field satisfying $X_t|_{t = 0} = X_{[0]}$ and $X_t|_{t = 1} = X$. Define a time-dependent formal vector field $U_t := \sum_{k = 0}^\infty t^kU_{[k + 1]}$ by declaring $U_{[1]} \in \vfGrad{1}$ to be the unique solution to $[X_{[0]}, U_{[1]}] = X_{[1]}$, then recursively defining $U_{[k + 1]} \in \vfGrad{k + 1}$ by inverting Equation \eqref{eq:homological-equation}.

    By construction, $X_t$ and $U_t$ satisfy Equation \eqref{eq:weighted-lin-moser} in the formal sense. By Proposition \ref{prop:evaluative-flow-weighted-condition}, the flow $\varphi_t$ of $U_t$ is evaluative. The $(r, s) = (1, 0)$ case of the formal Moser trick (Corollary \ref{cor:formal-moser}) yields the desired formal diffeomorphism $\varphi := \varphi_1$.
\end{proof}

As a special case, we note that if $X$ is weighted Euler-like, then $\ad_{X_{[0]}} = \ad_{\euler_w}$ acts on $\vfGrad{k}$ by multiplication by $k$. The unique solution to Equation \eqref{eq:homological-equation} is then given by $U_{[k + 1]} = X_{[k + 1]}$, recovering the proof of Proposition \ref{prop:weighted-euler-like-lin}. In general, Theorem \ref{thm:weighted-linearizability} is advantageous as each $\vfGrad{k}$ is a finite-dimensional $\K$-vector space by \eqref{eq:quasi-homogeneous-vf}, so we can determine invertibility of the adjoint operator on $\vfGrad{k}$ by examining its eigenvalues; such is the topic of the next section.

\subsection{The weighted homological equation}\label{subsec:weighted-hom-eq}

For this section, we fix a weighting $w = (w_1, \ldots, w_n)$ of $\K^n$ once and for all. Keeping with the notation used in the proof of Proposition \ref{prop:evaluative-isotopy-weighted-condition}, we denote the number of \emph{distinct} weights in $w$ by $m$ and the number of coordinates assigned to the $\ell$\textsuperscript{th} smallest weight value by $d_\ell$. We define $\mu \colon \{1, \dots, n\} \to \{1, \dots, m\}$ by the property that $\mu(i)$ is the integer $\ell$ such that $w_i$ is the $\ell$\textsuperscript{th} smallest weight value, so that $d_\ell$ is the cardinality of $\mu^{-1}(\ell)$. We identify the linear polynomials in $\K[x]$ with the $\K$-linear dual space $(\K^n)^*$ and define a grading
\begin{align}\label{eq:dual-grading}
    (\K^n)^* = \bigoplus_{\ell = 1}^m V^\ell,
\end{align}
where $V^\ell$ is the subspace of $(\K^n)^*$ with basis given by $\{x^i \mid \mu(i) = \ell\}$, i.e., $V^{\mu(i)} = \mathcal{P}^{w_i} \cap (\K^n)^*$.

Given a (formal or smooth) vector field $X$ on $\K^n$, we denote by $X_\lin \in \vf_\poly$ the usual linear approximation of $X$, i.e., its weighted linear approximation with respect to the trivial weighting. As a derivation of $\K[x]$, $X_\lin$ preserves (unweighted) polynomial degree, hence leaves $(\K^n)^*$ invariant. As explained in Section \ref{subsec:background}, when $\K = \R$ or $\K = \C$, conditions guaranteeing $X$ to be linearizable in a given category are often phrased in terms of the eigenvalues of the Jacobian matrix $DX(0)$, where $X$ is identified with the $n$-tuple of its components. Equivalently, these are the eigenvalues of  $X_\lin \colon (\K^n)^* \to (\K^n)^*$, as the transpose of $DX(0)$ represents $X_\lin$ in the ordered basis $\{x^1, \ldots, x^n\}$.

Assuming $X$ is $w$-admissible with weighted linear approximation $X_{[0]} \in \vfGrad{0}$, we would like to phrase analogous conditions for weighted linearizability in terms of ``the eigenvalues of $X_{[0]}$ on $(\K^n)^*$''. A priori, it is not immediately clear how to interpret this, as $X_{[0]}$ generally does not preserve $(\K^n)^*$: $X_{[0]}$ preserves \emph{weighted} polynomial degree, so the image of each $x^i$ under $X_{[0]}$ may include products of \emph{unweighted} linear polynomials with weighted degree equal to $w_i$. One way to return to $(\K^n)^*$ is to simply drop the (unweighted) non-linear terms---in other words, to act on $(\K^n)^*$ via $(X_{[0]})_\lin$. Alternatively, one could first act on $(\K^n)^*$ via the usual linear approximation $X_\lin$, then drop any terms that are not weighted-linear---in other words, act on $(\K^n)^*$ via $(X_\lin)_{[0]}$. It turns out that the order does not matter:

\begin{lemma}\label{lem:linear-approximation-commutative}
    Suppose $X \in \vf$ has weighted linear approximation $X_{[0]} \in \vfGrad{0}$ and unweighted linear approximation $X_\lin \in \vf_\poly$.
    \begin{enumerate}[label=(\roman*), ref=\thetheorem{}(\roman*)]
        \item Weighted linear approximation commutes with unweighted linear approximation:
        \begin{align*}
            (X_\lin)_{[0]} = (X_{[0]})_\lin.
        \end{align*}
        \item\label{itm:weighted-lin-char-poly} If $X$ is $w$-admissible, then $(X_{[0]})_\lin$ and $X_\lin$ share the same characteristic polynomial as operators on $(\K^n)^*$, and $(X_{[0]})_\lin$ preserves the grading \eqref{eq:dual-grading}.
    \end{enumerate}
\end{lemma}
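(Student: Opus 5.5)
Both parts reduce to bookkeeping on monomial vector fields $x^\alpha\coordvf{x^i}$, so I will expand $X$ in this basis and track two gradings at once. Each such term has unweighted degree $|\alpha| - 1$ and weighted degree $\pair{w, \alpha} - w_i$. Both $X \mapsto X_\lin$ and $X \mapsto X_{[0]}$ are $\K$-linear and continuous; they act on monomials as projections. The first keeps exactly the monomials with $|\alpha| = 1$, and the second keeps exactly those with $\pair{w, \alpha} = w_i$. So $(X_\lin)_{[0]}$ and $(X_{[0]})_\lin$ both equal the sum of the terms of $X$ satisfying both conditions, namely the terms $c^i_j x^j\coordvf{x^i}$ with $w_j = w_i$. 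This proves (i), and it also shows that $(X_{[0]})_\lin$ sends $x^i$ to a combination of the $x^j$ with $\mu(j) = \mu(i)$. In other words, it preserves each $V^\ell$, which gives the grading statement in (ii).

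For the characteristic polynomial in (ii), I will use $w$-admissibility, $X \in \vfFilt{0}$, which says that every monomial term $x^\alpha\coordvf{x^i}$ of $X$ has $\pair{w, \alpha} \geq w_i$. Restricting to the linear terms $x^j\coordvf{x^i}$, a nonzero coefficient forces $w_j \geq w_i$. Now write the matrix of $X_\lin$ on $(\K^n)^*$ in the ordered basis $\{x^1, \ldots, x^n\}$. The image $X_\lin(x^i) = \sum_j c^i_j x^j$ involves only $x^j$ with $w_j \geq w_i$. Since the weights are sorted increasingly (Definition \ref{def:weighting}), the matrix is therefore block triangular with respect to the decomposition \eqref{eq:dual-grading}, and the blocks are indexed by distinct weight values.

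The diagonal blocks are precisely the entries with $w_j = w_i$, which is the matrix of $(X_{[0]})_\lin$ by the first paragraph. The off-diagonal blocks of $(X_{[0]})_\lin$ vanish. A block triangular matrix has characteristic polynomial equal to the product of the characteristic polynomials of its diagonal blocks. Hence $X_\lin$ and $(X_{[0]})_\lin$ share the same characteristic polynomial.

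I expect no real obstacle here. The only point needing care is the triangularity direction: I must use the action on the dual space $(\K^n)^*$ (equivalently, the transpose of the Jacobian) consistently when deciding whether the blocks are upper or lower triangular. Either way, block triangularity suffices for the determinant argument.
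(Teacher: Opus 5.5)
Your proposal is correct and follows the paper's proof essentially verbatim. Part (i) works because both composites keep exactly the monomial terms with $|\alpha| = 1$ and $\pair{w, \alpha} = w_i$. In part (ii), admissibility makes the linear part block triangular (the paper phrases this via the block upper triangular $DX(0)$ and its transpose), and $(X_{[0]})_\lin$ is exactly its block-diagonal part; your side remark that the grading-preservation claim needs only (i), not admissibility, is also correct.
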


\begin{proof}
    \textbf{(i)} By definition, $X_\lin$ is obtained from $X$ by keeping only the monomial vector fields $x^\alpha\coordvf{x^i}$ with $|\alpha| = 1$, whereas $X_{[0]}$ is obtained from $X$ by keeping only the monomial vector fields with $\pair{w, \alpha} = w_i$. Thus $(X_\lin)_{[0]}$ and $(X_{[0]})_\lin$ are both obtained from $X$ by keeping only the terms satisfying both equalities.

    \textbf{(ii)} Assuming $X$ is $w$-admissible, if an (unweighted) linear term $x^j\coordvf{x^i}$ appears in $X$, then it must be the case that $w_j \geq w_i$. Since we require weights to be sorted from smallest to largest in Definition \ref{def:weighting}, it follows that the Jacobian matrix $DX(0)$ is block upper triangular:
    \begin{align*}
        DX(0) =
        \begin{pmatrix}
            A_1 & * & \cdots & * \\
            O & \ddots & \ddots & \vdots \\
            \vdots & \ddots & \ddots & * \\
            O & \cdots & O & A_m
        \end{pmatrix},
    \end{align*}
    where each $A_\ell$ is a $d_\ell \times d_\ell$ matrix. In the weighted linear approximation $X_{[0]}$, one deletes the terms $x^j\coordvf{x^i}$ in $X$ such that $w_j > w_i$, so the Jacobian matrix $DX_{[0]}(0)$ is obtained from $DX(0)$ by replacing the blocks above the diagonal with zeroes. Thus $DX(0)$ and $DX_{[0]}(0)$ have the same characteristic polynomial. The first claim follows as the transposes of these matrices respectively represent $X_\lin$ and $(X_{[0]})_\lin$ in the ordered basis $\{x^1, \ldots, x^n\}$ of $(\K^n)^*$; the second claim follows as we have just shown that $DX_{[0]}(0)$ is block diagonal, and (the transposes of) these diagonal blocks correspond precisely to how $(X_{[0]})_\lin$ acts on the subspaces $V^\ell$.
\end{proof}

\emph{For simplicity, we henceforth assume that this characteristic polynomial splits in $\K$.} If not, then pass to its splitting field; the modifications needed are straightforward. As $(X_{[0]})_\lin$ respects the grading \eqref{eq:dual-grading}, we also require its eigenvalues to be listed in a way that respects this grading:

\begin{definition}\label{def:compatible-ordering}
    An ordering of the eigenvalues $(\lambda_1, \ldots, \lambda_n)$ of $(X_{[0]})_\lin \colon (\K^n)^* \to (\K^n)^*$ is called \textbf{$w$-compatible} if each $\lambda_i$ is an eigenvalue of the restriction $(X_{[0]})_\lin \colon V^{\mu(i)} \to V^{\mu(i)}$.
\end{definition}

We emphasize that these eigenvalues are the same as that of the usual linear approximation by Lemma \ref{itm:weighted-lin-char-poly}. The weighting only matters insofar as determining what orderings are allowed in our generalization of the Poincar\'{e}--Sternberg non-resonance condition:

\begin{definition}\label{def:weighted-resonance}
    Let $\lambda = (\lambda_1, \ldots, \lambda_n) \in \K^n$ be a vector. A \textbf{resonance} with respect to $w$ is a pair $(i, \alpha)$ where $i \in \{1, \ldots, n\}$ and $\alpha \in \N^n$ are such that
    \begin{align*}
        \pair{\lambda, \alpha} = \lambda_i, && \pair{w, \alpha} > w_i.
    \end{align*}
    If such a resonance exists, then the positive integer $\pair{w, \alpha} - w_i$ is called its \textbf{weighted degree}.

    Given a $w$-admissible vector field $X \in \vfFilt{0}$ with weighted linear approximation $X_{[0]} \in \vfGrad{0}$ and an integer $k \geq 1$, we call $X$ \textbf{non-resonant of weighted degree $k$} if some (equivalently, every) $w$-compatible ordering of the eigenvalues of $(X_{[0]})_\lin$ has no resonances of weighted degree $k$. We call $X$ \textbf{non-resonant} with respect to $w$ if this holds for all $k \geq 1$.
\end{definition}

Taking the trivial weighting recovers the usual definitions of (non-)resonance, for in this case the grading \eqref{eq:dual-grading} is trivial, so every ordering of the eigenvalues is a compatible ordering. We now state the main result of this section: the formal half of Theorem \ref{thm:main-theorem}.

\begin{theorem}\label{thm:adjoint-invertibility}
    Suppose $X \in \vfFilt{0}$ is $w$-admissible, with weighted linear approximation $X_{[0]} \in \vfGrad{0}$. If $k \geq 1$ is such that $X$ is non-resonant of weighted degree $k$, then $\ad_{X_{[0]}} \colon \vfGrad{k} \to \vfGrad{k}$ is invertible. In particular, if $X$ is non-resonant with respect to $w$, then there exists a formal diffeomorphism $\varphi$ of $\K^n$ such that $\varphi^*X = X_{[0]}$ by Theorem \ref{thm:weighted-linearizability}.
\end{theorem}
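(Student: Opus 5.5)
Since $\vfGrad{k}$ is finite-dimensional by \eqref{eq:quasi-homogeneous-vf}, it suffices to show that $0$ is not an eigenvalue of $\ad_{X_{[0]}}$ on $\vfGrad{k}$. We will do this by filtering $\vfGrad{k}$ so that $\ad_{X_{[0]}}$ becomes block triangular with respect to a suitable basis, and identifying its diagonal action with that of a simpler operator. Write $L := (X_{[0]})_\lin$ and $N := X_{[0]} - L$. Then $L$ is a linear vector field preserving the grading \eqref{eq:dual-grading} (Lemma \ref{itm:weighted-lin-char-poly}), and $N \in \vfGrad{0}$ consists of monomial terms $x^\alpha\coordvf{x^i}$ with $\pair{w,\alpha} = w_i$ and $|\alpha| \geq 2$. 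Each of $\ad_{X_{[0]}}$, $\ad_L$ and $\ad_N$ preserves $\vfGrad{k}$, since all three fields lie in $\vfGrad{0}$.

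\textbf{Step 1: triangularity.} We filter $\vfGrad{k}$ by unweighted polynomial degree. Because every monomial in $\vfGrad{k}$ has $\pair{w,\alpha} = w_i + k$ with all $w_j \geq 1$, only finitely many degrees occur. For a monomial term $x^\beta\coordvf{x^j}$ of $N$ we have $|\beta| \geq 2$, so bracketing with it raises unweighted degree by $|\beta| - 1 \geq 1$. Consequently $\ad_N$ strictly increases the unweighted-degree filtration on $\vfGrad{k}$, while $\ad_L$ preserves unweighted degree. Hence $\ad_{X_{[0]}} = \ad_L + \ad_N$ is triangular with respect to this filtration, and on the associated graded pieces it coincides with $\ad_L$. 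Therefore $\ad_{X_{[0]}}$ and $\ad_L$ have the same characteristic polynomial on $\vfGrad{k}$, and it suffices to show that $\ad_L$ is invertible on $\vfGrad{k}$.

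\textbf{Step 2: eigenvalues of $\ad_L$.} Split $L = S + M$ into commuting semisimple and nilpotent parts as an operator on $(\K^n)^*$; here we use the standing assumption that the characteristic polynomial splits. We take the Jordan decomposition within each $V^\ell$, which is possible because $L$ preserves the grading. Then $S$ is diagonalizable in some basis $y^1, \ldots, y^n$ adapted to the grading, with $y^i \in V^{\mu(i)}$ and $S y^i = \lambda_i y^i$; this gives a $w$-compatible ordering of the eigenvalues. Because the change of coordinates $x \mapsto y$ is linear and respects the grading, it preserves each $\mathcal{P}^k$ and each $\vfGrad{k}$. The monomials $y^\alpha\coordvf{y^i}$ with $\pair{w,\alpha} - w_i = k$ therefore form a basis of $\vfGrad{k}$. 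Viewing $S$ and $M$ as linear vector fields, the map $\ad_S$ acts diagonally on this basis with eigenvalues $\pair{\lambda,\alpha} - \lambda_i$, while $\ad_M$ is nilpotent and commutes with $\ad_S$. It follows that the eigenvalues of $\ad_L$ on $\vfGrad{k}$ are exactly the numbers $\pair{\lambda,\alpha} - \lambda_i$ with $\pair{w,\alpha} - w_i = k$. Non-resonance of weighted degree $k$ says precisely that none of these vanish, so $\ad_L$ is invertible on $\vfGrad{k}$. The final sentence of the statement then follows from Theorem \ref{thm:weighted-linearizability}.

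\textbf{Main obstacle.} We expect the main difficulty to be Step 2's bookkeeping. One must check that the sign convention for $\ad_L$ acting on $y^\alpha\coordvf{y^i}$ produces $\pair{\lambda,\alpha} - \lambda_i$ with the $\lambda_i$ being the eigenvalues of $L$ on $(\K^n)^*$, and not their negatives or the transpose's. One must also confirm that $\ad_S$ and $\ad_M$ commute, which follows from $[S,M]=0$ via the Jacobi identity. A further subtlety is that the adapted basis must stay inside the graded pieces so that $\vfGrad{k}$ is preserved. This is also why the equivalence ``some versus every'' $w$-compatible ordering in Definition \ref{def:weighted-resonance} is harmless: different compatible orderings differ only by permutations within each $V^\ell$, and these permute resonances while preserving their weighted degree.
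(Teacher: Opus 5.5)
Your argument is correct. It shares the paper's skeleton: make $\ad_{X_{[0]}}$ triangular on $\vfGrad{k}$ so that the diagonal entries are the numbers $\pair{\lambda,\alpha}-\lambda_i$ with $\pair{w,\alpha}-w_i=k$. The difference is in how you treat the nilpotent part of the linear term.

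The paper conjugates by a Jordan basis adapted to the grading and writes $\varphi^*X_{[0]}=\Lambda+J+T$. It then exhibits one explicitly ordered basis of $\vfGrad{k}$: decreasing $|\alpha|$, then decreasing index $i$, then decreasing lexicographic order of $\alpha$. In this basis $\ad_\Lambda$ is diagonal and both $\ad_J$ and $\ad_T$ are strictly upper triangular, and the $\ad_J$ case is checked by a direct bracket computation.

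Your Step~1 is exactly the first sorting key. The unweighted-degree filtration disposes of your $N$, which is the paper's $T$. Because the filtration argument is basis-free, you do not even need to transport $N$ to the new coordinates.

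Your Step~2 replaces the other two sorting keys, and the bracket computation for $\ad_J$, with the Jordan--Chevalley decomposition $L=S+M$ taken inside each $V^\ell$. There $\ad_S$ is diagonal on the $y$-monomials and $\ad_M$ is a commuting nilpotent, so $\ad_L$ and $\ad_S$ have the same spectrum. This is more conceptual and makes it transparent that only the semisimple part of $L$ matters. The paper's version is entirely elementary and produces an explicit triangular matrix, which is what one would actually use to solve \eqref{eq:homological-equation} recursively.

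The one assertion you state without justification is that $\ad_M$ is nilpotent on $\vfGrad{k}$. It is easy but should be written down. Identify the polynomial vector fields of fixed unweighted degree $d$ with $\mathrm{Sym}^d\big((\K^n)^*\big)\otimes\K^n$. On this space $\ad_M$ is the sum of two commuting operators:
\begin{itemize}
    \item the derivation extension of $M$ to $\mathrm{Sym}^d$, which is nilpotent by the Leibniz rule;
    \item the map $\coordvf{x^i}\mapsto[M,\coordvf{x^i}]$ on constant vector fields, which is minus the dual of $M$ and hence nilpotent.
\end{itemize}
A sum of two commuting nilpotent operators is nilpotent, and $\vfGrad{k}$ is an $\ad_M$-invariant subspace of the sum of these spaces over finitely many $d$.
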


\begin{proof}
    By Lemma \ref{itm:weighted-lin-char-poly}, each $V^\ell$ is invariant under $(X_{[0]})_\lin$. Concatenating Jordan bases for its restrictions to each $V^\ell$ produces an ordered Jordan basis $\{p^1, \ldots, p^n\}$ of $(\K^n)^*$ for $(X_{[0]})_\lin$ such that $p^i \in V^{\mu(i)} = \mathcal{P}^{w_i} \cap (\K^n)^*$ for each $i$. Consequently, the linear change of basis $x^i \mapsto p^i$ extends to a formal diffeomorphism $\psi \colon \powseries{\K}{x} \to \powseries{\K}{x}$ that leaves $\K[x]$ invariant and preserves both weighted and unweighted polynomial degree.\footnote{Grabowski--Rotkiewicz call a map preserving weighted polynomial degree a \emph{morphism of graded spaces}.} The same facts are true of $\varphi := \psi^{-1}$, so the pullback $\varphi^*$ leaves $\vf_\poly$ invariant and preserves both the usual unweighted grading on $\vf_\poly$ and the grading \eqref{eq:vf-poly-grading}. Since $\varphi^*X_{[0]} \in \vfGrad{0}$, it suffices to prove that $\ad_{\varphi^*X_{[0]}} = \varphi^* \circ \ad_{X_{[0]}} \circ (\varphi^*)^{-1}$ is invertible on $\vfGrad{k}$.

    All monomial vector fields $x^\beta\coordvf{x^j}$ appearing in $X_{[0]}$ satisfy $\pair{w, \beta} = w_j$, and writing $X_{[0]} = (X_{[0]})_\lin + \big[X_{[0]} - (X_{[0]})_\lin\big]$ splits $X_{[0]}$ into those with $|\beta| = 1$ and those with $|\beta| > 1$. As $\{p^1, \ldots, p^n\}$ puts $(X_{[0]})_\lin$ into Jordan canonical form, we have
    \begin{align*}
        \varphi^*(X_{[0]})_\lin = \lambda_1x^1\coordvf{x^1} + \sum_{j = 2}^n (\lambda_j x^j + \varepsilon_j x^{j - 1})\coordvf{x^j},
    \end{align*}
    where $(\lambda_1, \ldots, \lambda_n)$ is a $w$-compatible ordering of the eigenvalues of $(X_{[0]})_\lin$ and $\varepsilon_j \in \{0, 1\}$ corresponds to whether $p^j$ is an eigenvector or a generalized eigenvector of higher rank. Thus we can write $\varphi^*X_{[0]} = \Lambda + J + T$, where
    \begin{align*}
        \Lambda & := \sum_{j = 1}^n \lambda_j x^j\coordvf{x^j}, & J & := \sum_{j = 2}^n \varepsilon_jx^{j - 1}\coordvf{x^j}, & T & := \varphi^*\left(X_{[0]} - (X_{[0]})_\lin\right).
    \end{align*}
    A straightforward calculation shows that monomial vector fields $x^\alpha\coordvf{x^i}$ are eigenvectors of $\ad_\Lambda$ with eigenvalues $\pair{\lambda, \alpha} - \lambda_i$. We claim that $\ad_J$ and $\ad_T$ are both represented by \emph{strictly} upper triangular matrices when the basis $\left\{x^\alpha\coordvf{x^i} \mid \pair{w, \alpha} - w_i = k\right\}$ of $\vfGrad{k}$ is ordered as follows:
    \begin{center}
        \emph{First, sort the monomial vector fields $x^\alpha\coordvf{x^i}$ in decreasing order of $|\alpha|$. Among those $x^\alpha\coordvf{x^i}$ with a fixed value of $|\alpha|$, sort the ones with $\coordvf{x^i}$ before the ones with $\coordvf{x^j}$ if $i > j$. Among those $x^\alpha\coordvf{x^i}$ with fixed values of $|\alpha|$ and $i$, sort the $x^\alpha$'s in decreasing lexicographical order of the multi-indices $\alpha$.}
    \end{center}
    From this claim, it follows that $\ad_{\varphi^*X_{[0]}} = \ad_\Lambda + \ad_J + \ad_T$ is represented by an upper triangular matrix whose diagonal entries are of the form $\pair{\lambda, \alpha} - \lambda_i$ where $\pair{w, \alpha} - w_i = k$. As $X$ is non-resonant of weighted degree $k$, these diagonal entries are non-zero, so $\ad_{\varphi^*X_{[0]}}$ is invertible on $\vfGrad{k}$.

    For $\ad_J$, let $j \in \{2, \ldots, n\}$ be such that $\varepsilon_j = 1$. (This is only possible if $w_{j - 1} = w_j$; otherwise $J$ would have a term $x^{j - 1}\coordvf{x^j}$ with negative weighted degree.) We compute
    \begin{align*}
        \left[x^{j - 1}\coordvf{x^j}, x^\alpha\coordvf{x^i}\right] & = x^{j - 1}\f{\p x^\alpha}{\p x^j}\coordvf{x^i} - x^\alpha\f{\p x^{j - 1}}{\p x^i}\coordvf{x^j}.
    \end{align*}
    If the first term is non-zero, then $\f{\partial x^\alpha}{\partial x^j} \neq 0$; writing $\alpha = (\alpha_1, \ldots, \alpha_n)$, this implies that $\alpha_j > 0$. Then
    \begin{align*}
        x^{j - 1}\f{\p x^\alpha}{\p x^j} = \alpha_j x^{\alpha'}, \quad \text{where }\alpha' = (\alpha_1, \ldots, \alpha_{j - 1} + 1, \alpha_j - 1, \ldots, \alpha_n).
    \end{align*}
    As $\alpha'$ is lexicographically greater than $\alpha$, the monomial vector field $x^{\alpha'}\coordvf{x^i}$ comes before $x^\alpha\coordvf{x^i}$ in the ordered basis. If the second term is non-zero, then $i = j - 1$, in which case $x^\alpha\f{\p x^{j - 1}}{\p x^i}\coordvf{x^j} = x^\alpha\coordvf{x^{i + 1}}$, which also comes before $x^\alpha\coordvf{x^i}$ in the prescribed order.

    For $\ad_T$, recall that $X_{[0]} - (X_{[0]})_\lin$ only involves monomial vector fields $x^\beta\coordvf{x^j}$ such that $|\beta| > 1$. As $\varphi$ preserves unweighted polynomial degree, $T = \varphi^*\big(X_{[0]} - (X_{[0]})_\lin\big)$ also only involves monomial vector fields $x^\beta\coordvf{x^j}$ such that $|\beta| > 1$. Thus $[T, x^\alpha\coordvf{x^i}]$ is a linear combination of monomial vector fields $x^\gamma\coordvf{x^k}$ such that $|\gamma| > |\alpha|$, all of which come before $x^\alpha\coordvf{x^i}$ in the order described above.
\end{proof}

\begin{example}
    Assign the weighting $w = (1, 2, 2, 3, 3)$ to the coordinates $(x, y, z, u, v)$ of $\K^5$. For arbitrary coefficients $a, b, c, d, e, f, g, h \in \K$, the vector field
    \begin{align*}
        X_{[0]} & = x\coordvf{x} + (ax^2 + 4y + z)\coordvf{y} + (bx^2 -4y)\coordvf{z} \\
        & \quad + (cx^3 + dxy + exz + 4u - v)\coordvf{u} + (fx^3 + gxy + hxz + u + 2v)\coordvf{v}
    \end{align*}
    is quasi-homogeneous of weighted degree $0$, and its unweighted linear approximation is
    \begin{align*}
        (X_{[0]})_\lin = x\coordvf{x} + (4y + z)\coordvf{y} - 4y\coordvf{z} + (4u - v)\coordvf{u} + (u + 2v)\coordvf{v}.
    \end{align*}
    A $w$-compatible ordering of the eigenvalues of $(X_{[0]})_\lin$ is given by $\lambda = (1, 2, 2, 3, 3)$, and a corresponding ordered Jordan basis of $(\K^n)^*$ is given by $\{x, 2y + z, y, u - v, u\}$. Thus the formal diffeomorphism $\psi$ from the proof of Theorem \ref{thm:adjoint-invertibility} and its inverse $\varphi$ respectively act on $\powseries{\K}{x}$ by
    \begin{align*}
        \psi \colon
        \begin{cases}
            x \mapsto x \\
            y \mapsto 2y + z \\
            z \mapsto y \\
            u \mapsto u - v \\
            v \mapsto u
        \end{cases}
        &&
        \varphi \colon
        \begin{cases}
            x \mapsto x \\
            y \mapsto z \\
            z \mapsto y - 2z \\
            u \mapsto v \\
            v \mapsto -u + v
        \end{cases}
    \end{align*}
    and it is straightforward to verify that $\varphi^*$ puts $(X_{[0]})_\lin$ into Jordan canonical form:
    \begin{align*}
        \varphi^*\left((X_{[0]})_\lin\right) = x\coordvf{x} + 2y\coordvf{y} + (y + 2z)\coordvf{z} + 3u\coordvf{u} + (u + 3v)\coordvf{v}.
    \end{align*}
    It is a happy coincidence that $\lambda = w$; this implies that no $i \in \{1, \ldots, n\}$ and $\alpha \in \N^n$ satisfy both $\pair{\lambda, \alpha} = \lambda_i$ and $\pair{w, \alpha} > w_i$, so $X_{[0]}$ is non-resonant with respect to $w$ (but resonant in the unweighted sense). Thus, any $w$-admissible vector field whose weighted linear approximation is equal to $X_{[0]}$ is related to $X_{[0]}$ via a formal diffeomorphism by Theorem \ref{thm:adjoint-invertibility}.
\end{example}

\subsection{Smooth weighted linearization}\label{subsec:weighted-lin-smooth}

We conclude by returning to our original motivating question for developing the formal machinery for Moser's trick. Let $X$ be a smooth vector field on an open neighbourhood of $0 \in \R^n$. If $X$ is admissible, non-resonant, and has weighted linear approximation $X_{[0]} \in \vfGrad{0}$ with respect to a weighting $w$ of $\R^n$, then there exists a formal diffeomorphism $\varphi$ of $\R^n$ such that $\varphi^*X = X_{[0]}$ by Theorem \ref{thm:adjoint-invertibility}. To upgrade $\varphi$ to an actual diffeomorphism using Chen's theorem of equivalence \cite{chen_equivalence_1963}, one would like to know if weighted non-resonance implies hyperbolicity. This is indeed the case, but it is not immediately clear as Definition \ref{def:weighted-resonance} involves $w$-compatible orderings. Our final results compose the smooth half of Theorem \ref{thm:main-theorem}:

\begin{lemma}\label{lem:weighted-hyperbolicity}
    Let $X$ be a smooth vector field on an open neighbourhood of $0 \in \R^n$. If there exists a weighting of $\R^n$ with respect to which $X$ is admissible and non-resonant, then $0$ is a hyperbolic fixed point of $X$.
\end{lemma}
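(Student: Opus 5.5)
We argue by contraposition: assuming $0$ is not a hyperbolic fixed point of $X$, we exhibit a resonance of positive weighted degree for every $w$-compatible ordering of the eigenvalues of $(X_{[0]})_\lin$. Admissibility already forces $X(0)=0$, so $0$ is a fixed point. By Lemma \ref{itm:weighted-lin-char-poly}, the eigenvalues of $(X_{[0]})_\lin$ coincide with those of $DX(0)$, and $(X_{[0]})_\lin$ preserves each real subspace $V^\ell$ of the grading \eqref{eq:dual-grading}. The eigenvalues of each restriction to $V^\ell$ are roots of a real polynomial, so non-real ones occur in conjugate pairs \emph{within the same block} $V^\ell$. This block-wise conjugate pairing is the key observation: in a $w$-compatible ordering (after passing to $\C$), the indices $i$ and $j$ carrying $\lambda$ and $\bar\lambda$ satisfy $w_i = w_j$.

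Now fix a $w$-compatible ordering $(\lambda_1,\ldots,\lambda_n)$. Non-hyperbolicity leaves two cases. In the first case, some $\lambda_i = 0$. Take $\alpha = 2e_i$. Then $\pair{\lambda,\alpha} = 0 = \lambda_i$ and $\pair{w,\alpha} = 2w_i > w_i$, since weights are positive. This gives a resonance of weighted degree $w_i \geq 1$.

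In the second case, some $\lambda_i = i\theta$ with $\theta \neq 0$. By the observation above there is $j \neq i$ with $\lambda_j = -i\theta$ and $w_j = w_i$. Take $\alpha = 2e_i + e_j$. Then $\pair{\lambda,\alpha} = 2i\theta - i\theta = \lambda_i$ and $\pair{w,\alpha} = 2w_i + w_j = 3w_i > w_i$. This gives a resonance of weighted degree $2w_i \geq 2$.

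In either case $X$ fails to be non-resonant with respect to $w$, which contradicts the hypothesis. The only step needing care is that $j$ can be chosen with $w_j = w_i$. In fact this equality is not needed, since the inequality $2w_i + w_j > w_i$ holds for any positive weights. It still matters, however, that $-i\theta$ actually appears as some $\lambda_j$ in the ordering. That follows because the full characteristic polynomial is real, so conjugate eigenvalues occur with equal multiplicity. The block-wise pairing is therefore a convenience rather than an obstacle, and the main subtlety reduces to checking that the definition of $w$-compatible ordering over the splitting field $\C$ behaves as expected.
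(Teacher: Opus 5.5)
Your proof is correct and follows the paper's argument: contraposition, with the resonance $\lambda_j = 2\lambda_j$ when some eigenvalue vanishes, and $i\theta = 2(i\theta) + 1(-i\theta)$ when there is a purely imaginary pair. Your remark that the same-block pairing is unnecessary is also right. The paper spends a step showing the conjugate pair lies in a single $V^\ell$, but that step can be dropped: $2w_i + w_j > w_i$ holds for any positive weights, and $-i\theta$ occurs somewhere in any ordering because the full characteristic polynomial is real.
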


\begin{proof}
    Suppose that $0$ is \emph{not} a hyperbolic fixed point, $w$ is any weighting with respect to which $X$ is admissible, and $\lambda = (\lambda_1, \ldots, \lambda_n)$ is any $w$-compatible ordering of the (possibly complex) eigenvalues of $(X_{[0]})_\lin$. If some eigenvalue $\lambda_j$ vanishes, then $\lambda_j = 2\lambda_j$ is a resonance with respect to $w$ as $2w_j > w_j$. If not, then there is a complex conjugate pair of non-zero purely imaginary eigenvalues $\pm i\theta$. By Lemma \ref{itm:weighted-lin-char-poly}, $(X_{[0]})_\lin$ preserves the grading $(\R^n)^* = \bigoplus_{\ell = 1}^m V^\ell$, so this pair must come from the restriction of $(X_{[0]})_\lin$ to some $V^\ell$: if any such pair came from two \emph{different} $V^\ell$'s, then the characteristic polynomial of some restriction of $(X_{[0]})_\lin$ to one of the $V^\ell$'s would have an odd number of non-real roots when counted with multiplicity, which is impossible for a polynomial with real coefficients. Thus $i\theta = 2(i\theta) + 1(-i\theta)$ is a resonance with respect to $w$.
\end{proof}

\begin{corollary}\label{cor:smooth-weighted-lin}
    Let $X$ be a smooth vector field on an open neighbourhood of $0 \in \R^n$. For any weighting of $\R^n$ with respect to which $X$ is admissible, non-resonant, and has weighted linear approximation $X_{[0]} \in \vfGrad{0}$, there exists a germ of a diffeomorphism $\varphi$ fixing $0$ such that $\varphi^*X = X_{[0]}$.
\end{corollary}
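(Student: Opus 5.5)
The plan is to combine the formal result with Chen's theorem of equivalence, in four steps.

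First, pass to formal Taylor series at $0$. The Taylor series of $X$, still denoted $X$, is a $w$-admissible, non-resonant formal vector field. The vector field $X_{[0]}$ is a polynomial vector field, hence smooth on all of $\R^n$, and it is its own Taylor series. Theorem \ref{thm:adjoint-invertibility} then provides a formal diffeomorphism $\hat\varphi$ of $\R^n$ with $\hat\varphi^*X = X_{[0]}$ on the level of formal power series. By the inverse function criterion of Section \ref{subsec:formal-objects}, the linear part $D\hat\varphi(0)$ is invertible.

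Second, realize $\hat\varphi$ by an honest local diffeomorphism. By Borel's lemma, choose a smooth map $\varphi_0 \colon \R^n \to \R^n$ with $\varphi_0(0) = 0$ whose Taylor series at $0$ is the $n$-tuple of $\hat\varphi$. Since $D\varphi_0(0) = D\hat\varphi(0)$ is invertible, the inverse function theorem makes $\varphi_0$ a diffeomorphism germ fixing $0$. The smooth pullback $Y := \varphi_0^*X$ then has Taylor series $\hat\varphi^*X = X_{[0]}$. This uses that the Taylor series map intertwines smooth pullback of vector fields with formal pullback, via the coordinate formula $(\varphi^*X)(x) = D\varphi(x)^{-1}[X(\varphi(x))]$, with attention to the order convention of Remark \ref{rmk:pullback-notation}. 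Hence $Y$ and $X_{[0]}$ are smooth vector fields near $0$ that are formally conjugate, indeed formally equal, via the identity.

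Third, verify hyperbolicity and apply Chen's theorem. Lemma \ref{lem:weighted-hyperbolicity} shows that $0$ is a hyperbolic fixed point of $X$. Hyperbolicity depends only on the eigenvalues of the linearization at the fixed point, which are invariant under conjugation by $\varphi_0$, so $0$ is hyperbolic for $Y$. Since $X_{[0]}$ has the same linear part as $Y$, it is hyperbolic at $0$ too; alternatively this follows from Lemma \ref{itm:weighted-lin-char-poly}, which says $(X_{[0]})_\lin$ has the same characteristic polynomial as $X_\lin$. Chen's theorem then gives a diffeomorphism germ $\varphi_1$ fixing $0$ with $\varphi_1^*Y = X_{[0]}$. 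Setting $\varphi := \varphi_0 \circ \varphi_1$, with composition ordered so that pullbacks compose correctly, gives $\varphi^*X = \varphi_1^*\varphi_0^*X = X_{[0]}$.

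I expect the main obstacle to be bookkeeping rather than substance. It lies in step two: checking that formal pullback as defined in the paper, which acts by automorphisms of $\powseries{\R}{x}$ and follows the convention of Remark \ref{rmk:pullback-notation}, matches the Taylor series of the smooth pullback. I will handle this directly with the coordinate formula above. I will also state Chen's theorem precisely in the form ``formally conjugate hyperbolic germs are smoothly conjugate'' so that it applies to $Y$ and $X_{[0]}$.
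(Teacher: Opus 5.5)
Your proposal is correct and matches the paper's proof, which simply combines Theorem \ref{thm:adjoint-invertibility}, Lemma \ref{lem:weighted-hyperbolicity}, and Chen's theorem of equivalence. Your Borel-lemma step, which realizes $\hat\varphi$ by a smooth germ so that Chen's theorem is only needed for two germs with equal Taylor series, is a harmless extra: the paper applies Chen's theorem directly to $X$ and $X_{[0]}$ in its ``formally conjugate hyperbolic germs are smoothly conjugate'' form.
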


\begin{proof}
    Apply Theorem \ref{thm:adjoint-invertibility}, Lemma \ref{lem:weighted-hyperbolicity}, and Chen's theorem of equivalence.
\end{proof}

In particular, the weighted Euler vector field \eqref{eq:weighted-euler} associated to any weighting $w$ of $\R^n$ is admissible and non-resonant with respect to $w$. We thus recover Algaba--Garc\'{i}a--Reyes' and Meinrenken's result that every smooth weighted Euler-like vector field on $\R^n$ is smoothly linearizable.


\bibliographystyle{plain}
\bibliography{Linearization}

\end{document}